\pgfplotsset{compat=1.15}
\newtheorem{thm}{Theorem}[section]
\newtheorem*{thm*}{Theorem}
\newtheorem{lemme}[thm]{Lemma}
\newtheorem{coro}[thm]{Corollary}
\newtheorem{prop}[thm]{Proposition}
\theoremstyle{definition}
\newtheorem{defi}[thm]{Definition}
\theoremstyle{remark}
\newtheorem{nota}[thm]{Notation}
\newtheorem{rem}[thm]{Remark}
\newtheorem{ex}[thm]{Example}
\newcommand{\abs}[1]{\left\lvert#1\right\rvert}
\DeclareMathOperator{\Id}{Id}
\DeclareMathOperator{\Stab}{Stab}
\DeclareMathOperator{\GL}{GL}
\DeclareMathOperator{\PGL}{PGL}
\DeclareMathOperator{\Gal}{Gal}
\DeclareMathOperator{\Tr}{Tr}
\DeclareMathOperator{\PSL}{PSL}
\DeclareMathOperator{\SL}{SL}
\newcommand{\RR}{\mathbb{R}}
\newcommand{\NN}{\mathbb{N}}
\newcommand{\ZZ}{\mathbb{Z}}
\newcommand{\QQ}{\mathbb{Q}}
\newcommand{\PP}{\mathbb{P}}
\begin{document}

\title{Symmetries of the $q$-deformed real projective line}
\author{Perrine Jouteur}
\date{}

\address{
Perrine Jouteur,
Laboratoire de Math\'ematiques de Reims, UMR~9008 CNRS et
Universit\'e de Reims Champagne-Ardenne,
U.F.R. Sciences Exactes et Naturelles,
Moulin de la Housse - BP 1039,
51687 Reims cedex 2,
France} 
\email{perrine.jouteur@univ-reims.fr}

\begin{abstract}
We generalize in two steps the quantized action of the modular group on $q$-deformed real numbers introduced by Morier-Genoud and Ovsienko in \cite{MGO}. First, we let the projective general linear group $\PGL_2(\ZZ)$ act on $q$-real numbers via a $q$-deformed action. The quantized matrices we get have combinatorial interpretations. Then we consider an extension of the group $\PGL_2(\ZZ)$ by the $2$-elements cyclic group, and define a quantized action of this extension on $q$-real numbers. We deduce from these actions some underlying relations between $q$-real numbers, and between left and right versions of $q$-deformed rational numbers. In particular we investigate the case of some algebraic numbers of degree $4$ and $6$. We also prove that the way of quantizing real numbers defined in \cite{MGO} is an injective process.
\end{abstract}

\maketitle
\vspace{-0.5 cm}
\tableofcontents

\newpage

\section{Introduction and main results}

The $q$-deformed real numbers were defined by Morier-Genoud and Ovsienko in \cite{MGO} and \cite{MGOr}, using a quantization of the action of the modular group $\PSL_2(\ZZ)$ on the rational projective line. These quantized numbers happen to be related to many different areas, including Farey triangulations \cite{ren_2024}, friezes patterns \cite{MGO_friezes}, combinatorics of fence posets \cite{McConville,oguz_rank_2023}, snake graphs \cite{Ovenhouse,Higher_q_fractions}, cluster algebras and $F$-polynomials \cite{MGO}, Markov numbers and Diophantine approximation \cite{Kogiso_Markov,labbe_2022,Oguz_Markov,leclere_radius_2024}, knot theory via the Kauffman bracket \cite{sikora_tangle_2024,Kogiso_Wakui,wakui_coprime_2022}, and homological algebra \cite{BBL,fan_topological_2023}.\\

In this paper we extend the group of symmetries acting on the quantized real numbers.
Given a real number $x$ and its $q$-deformation $[x]_q$, by Morier-Genoud and Ovsienko's construction one has the following relations:
\begin{equation}
\label{formulas}
\begin{array}{c}
[x+1]_q = q[x]_q + 1 ,\\[4pt]
 \text{ and } \\[4pt]
\left[\frac{-1}{x}\right]_q = -\frac{1}{q[x]_q}.
\end{array}
\end{equation}

\noindent We show that in addition the following holds
\begin{equation}
[-x]_q = \frac{-[x]_q+1-q^{-1}}{(q-1)[x]_q + 1} ~~~\text{ and equivalently, }~~~  \left[\frac{1}{x}\right]_q = \frac{(q-1)[x]_q + 1}{q[x]_q + 1 -q}.
\end{equation}
This induces a quantization of the action of $\PGL_2(\ZZ)$ on the real projective line.\\

As an application of these extended symmetries of quantized numbers, we get convenient formulas to work with $q$-deformed positive continued fractions (whereas in previous works the negative continued fractions were preferred, see \cite{Leclere_modular}). In particular, we show the injectivity of the quantization map.

We also use the $q$-deformed action of $\PGL_2(\ZZ)$ to quantize some polynomial equations whose Galois groups act on roots by fractional-linear transformations, following the ideas of \cite{Ovsienko_Ustinov}.

Moreover, we define a quantized action of  $\PGL_2(\ZZ)\times \ZZ_2$,  where $\ZZ_2 := \{1,-1\}$ is the group with two elements, on the rational projective line.  This action exchanges the left and the right versions of $q$-rational numbers, with the vocabulary of \cite{BBL}. For every rational number $x \in \QQ$, we deduce this relationship between left and right $q$-rational numbers, which was first observed by Thomas in \cite{thomas_2024}.

\begin{equation}
[x]^{\sharp}_q = \frac{[x]^{\flat}_{q^{-1}} + q-1 }{(1-q)[x]^{\flat}_{q^{-1}} + q} \text{ and } [x]^{\flat}_q = \frac{[x]^{\sharp}_{q^{-1}} + q-1 }{(1-q)[x]^{\sharp}_{q^{-1}} + q}.
\end{equation}
\noindent
This extended quantized action provides a coherent framework explaining the relations, for $x\in \QQ$,
\begin{equation}
[-x]_q = -q^{-1}[x]_{q^{-1}} \text{ and } \left[\frac{1}{x}\right]_q = \frac{1}{[x]_{q^{-1}}}.
\end{equation}

\subsection{Background on $q$-deformed numbers}

Let $q$ be a formal parameter. The classical $q$-deformed integers are polynomials in $q$,
$$
[n]_q = \frac{1-q^n}{1-q} = 1+q+\cdots q^{n-1} \in \ZZ[q].
$$
More generally, $q$-deformed rational numbers are rational fractions in $q$, belonging to $\ZZ(q)$, and $q$-deformed irrational real numbers are formal Laurent series in $q$, that is they belong to $\ZZ[[q]][q^{-1}]$ :

$$
\ZZ[[q]][q^{-1}] := \left\lbrace \sum_{i \geq \nu} a_i q^i ~|~ a_i \in \ZZ, ~\nu \in \ZZ \right\rbrace.
$$

First, let us focus on $q$-deformed rational numbers. We will consider the extended rational line, $\PP^1(\QQ) = \QQ\cup \{\infty\}$. The modular group $\PSL_2(\ZZ)$ acts on $\PP^1(\QQ)$ by fractional-linear transformations,
$$
\begin{array}{c c c}
\PSL_2(\ZZ)\times \PP^1(\QQ) & \longrightarrow & \PP^1(\QQ) \\[4pt]
\begin{pmatrix}
a & b \\
c & d \\
\end{pmatrix} \cdot \frac{r}{s} & = & \frac{ar + bs}{cr+ds}.\\
\end{array}
$$

Provided the $q$-deformed recursive relation $[x+1]_q = q[x]_q + 1$ corresponding to the quantization of the transformation $R : x \mapsto x+1$, there is only one way of quantizing the whole action of $\PSL_2(\ZZ)$. This only way is the following,

\begin{equation}
\label{generators}
R = \begin{pmatrix}
		1 & 1\\
		0 & 1\\
		\end{pmatrix}  \text{ and } S = \begin{pmatrix}
		0 & -1\\
		1 & 0\\
		\end{pmatrix}
\end{equation}
\noindent become
\begin{equation}
\label{qgenerators}
R_q = \begin{pmatrix}
		q & 1\\
		0 & 1\\
		\end{pmatrix}  \text{ and } S_q = \begin{pmatrix}
		0 & -q^{-1}\\
		1 & 0\\
		\end{pmatrix}.
\end{equation}

In $\PSL_2(\ZZ)$, the defining relations between $R$ and $S$ are $S^2 = (RS)^3 = \Id$. After quantization, the relations are unchanged (up to powers of $q$ and a sign) : $S_q^2 = \Id$ and $(R_qS_q)^3 = \Id$. The subgroup of $\PGL_2(\ZZ[q,q^{-1}])$ generated by $R_q$ and $S_q$ is thus isomorphic to the modular group \cite{Leclere_modular}, so this gives a well defined $q$-deformation of the action of $\PSL_2(\ZZ)$  on $\PP^1(\QQ)$. We will denote by $\PSL_{2,q}(\ZZ)$ the image of $\PSL_2(\ZZ)$ in $\PGL_2(\ZZ[q,q^{-1}])$ by the group homomorphism sending $R$, $S$ on $R_q$, $S_q$, 

$$
\begin{array}{c c c}
\PSL_2(\ZZ) & \overset{\simeq}{\longrightarrow} & \PSL_{2,q}(\ZZ) \subset \PGL_2(\ZZ[q,q^{-1}]) \\
M & \longmapsto & M_q \\
\end{array}.
$$

The group $\PSL_{2,q}(\ZZ) $ can also be understood via the reduced Burau representation of the braid group on three strands $B_3$. Let $\sigma_1$ and $\sigma_2$ be the two classical generators of $B_3$. The Burau representation of $B_3$ is 
$$
\rho_3 : B_3 \longrightarrow \SL_2(\ZZ(q)),
$$
$$
\sigma_1 \longmapsto \begin{pmatrix}
									q & 1\\
									0 & 1\\
									\end{pmatrix} \text{ and }
\sigma_2 \longmapsto \begin{pmatrix}
									1 & 0\\
									-q & q\\
									\end{pmatrix}.
$$ 
\noindent The quantized modular group is then precisely the projective version of the image of $B_3$ via this representation. Indeed, the centre $Z(B_3)$ of $B_3$ is the kernel of $\rho_3$, and the induced quotient homomorphism is an isomorphism of groups such that
$$
\begin{array}{c c c}
B_3/Z(B_3) & \overset{\simeq}{\longrightarrow} & \PSL_{2,q}(\ZZ)\\
\sigma_1 & \longmapsto & R_q\\
\sigma_1\sigma_2\sigma_1 & \longmapsto & S_q\\
\end{array}.
$$

\noindent This point of view was used in \cite{morier-genoud_burau_2024} to give a partial answer to the question of faithfulness of specialization of the Burau representation. It was also used in \cite{Jouteur_2024} to quantize the rational projective plane.\\

As the action by fractional-linear transformations on $\PP^1(\QQ)$ is transitive, the $q$-deformation of one point, say $\infty = \frac{1}{0}$, is enough to have the $q$-deformation of every point of the projective rational line. Yet, there is a subtlety : for an orbit of $\PP^1(\ZZ(q))$ to be an actual $q$-deformation of $\PP^1(\QQ)$, there must be in the orbit exactly one $q$-deformation for each rational number.

\begin{prop}
	\label{two_deformations}
There are only two $q$-deformations of $\infty$ in $\PP^1(\ZZ(q))$ whose orbits under the action of $\PSL_{2,q}(\ZZ)$ satisfy the condition :
\begin{center}
``For every $x \in \PP^1(\QQ)$, there is exactly one element $f_x(q)$ of the orbit such that $f_x(1) = x$".
\end{center}
These two $q$-deformations of $\infty$ are 
$$
[\infty]^{\sharp}_q = \frac{1}{0} \text{ and } [\infty]^{\flat}_q = \frac{1}{1-q}.
$$
\end{prop}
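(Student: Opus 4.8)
The plan is to translate the orbit condition into a condition on a single rational fraction $f_\infty(q) \in \PP^1(\ZZ(q))$, using transitivity of the $\PSL_{2,q}(\ZZ)$-action together with the already-fixed quantized generators $R_q, S_q$ of \eqref{qgenerators}. First I would set up the bookkeeping: for $M \in \PSL_2(\ZZ)$ write $M = M(q)$ for the corresponding matrix in $\PSL_{2,q}(\ZZ)$, obtained by substituting $q$ for $1$ in the word in $R_q^{\pm1}, S_q$ expressing $M$ in $R^{\pm1}, S$. The key structural observation is that specialization at $q=1$ is a group homomorphism $\PSL_{2,q}(\ZZ) \to \PSL_2(\ZZ)$ sending $M(q) \mapsto M$, so for any candidate deformation $[\infty]_q = f_\infty(q)$ the orbit element lying over $x = M\cdot\infty$ is forced to be $f_x(q) := M(q)\cdot f_\infty(q)$, and its specialization at $q=1$ is automatically $M \cdot \infty = x$. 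Hence the ``at least one'' half of the condition is free; the whole content is the ``exactly one'' (injectivity on the orbit) requirement, which fails precisely when two different group elements $M, M'$ with $M\cdot\infty \ne M'\cdot\infty$ nonetheless satisfy $M(q)\cdot f_\infty(q) = M'(q)\cdot f_\infty(q)$ — equivalently when the stabilizer of $f_\infty(q)$ in $\PSL_{2,q}(\ZZ)$ is strictly larger than (the image of) the stabilizer $\Stab_{\PSL_2(\ZZ)}(\infty)$.

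Next I would identify $\Stab_{\PSL_2(\ZZ)}(\infty)$: it is the infinite cyclic subgroup generated by $R$ (the parabolic upper-triangular matrices), together with trivial elements. So the condition becomes: $f_\infty(q)$ is fixed by $R_q^n(q) = R_q^n$ for all $n$ (this is automatic if $f_\infty(q)$ is fixed by $R_q$ itself — and $R_q \cdot \frac{r}{s} = \frac{qr+s}{s}$, so the fixed points of $R_q$ in $\PP^1$ are exactly $\frac{1}{0}$ and, solving $qr+s = \lambda r,\ s = \lambda s$, also $\frac{1}{1-q}$ coming from $\lambda = 1$... more precisely the fixed locus of $R_q$ is $\{\tfrac10\}$, while $\tfrac{1}{1-q}$ is the unique point with $R_q\cdot x = x + \text{(nothing)}$; I should be careful here and just directly compute $R_q$-fixed points), AND $f_\infty(q)$ is fixed by \emph{no} element of $\PSL_{2,q}(\ZZ)$ outside this cyclic subgroup. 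The first requirement alone will cut the candidates down to the two claimed values $\frac{1}{0}$ and $\frac{1}{1-q}$, using that a fractional-linear transformation over the field $\ZZ(q)$ has at most two fixed points in $\PP^1(\overline{\ZZ(q)})$ and $R_q$ is not the identity. Then the second requirement is verified for these two specific points by a direct check that their full stabilizer in $\PSL_{2,q}(\ZZ)$ is generated by $R_q$ (for $\frac10$) respectively by the conjugate $S_q R_q S_q^{-1}$-type parabolic fixing $\frac{1}{1-q}$, and specializes correctly.

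The main obstacle is the converse direction — showing there are \emph{only} these two, i.e. ruling out every other $f_\infty(q)$. The subtle point is that $f_\infty(q)$ is a priori allowed to be fixed by a large subgroup of $\PSL_{2,q}(\ZZ)$, as long as that subgroup specializes at $q=1$ onto $\Stab_{\PSL_2(\ZZ)}(\infty) = \langle R\rangle$ and the "exactly one" condition on the orbit still holds — so I cannot simply say "$\Stab(f_\infty(q))$ must equal $\langle R_q\rangle$". Instead the cleanest route is: the orbit map $\PSL_2(\ZZ)/\langle R\rangle \to \text{orbit of } f_\infty(q)$, $M\langle R\rangle \mapsto M(q)\cdot f_\infty(q)$, must be injective; applying this to the specific pair $R$-and-$S$ type relations, in particular forcing $S_q\cdot f_\infty(q)$, $R_q S_q\cdot f_\infty(q)$ etc. to all be distinct and to specialize to the distinct rationals $0, 1, \dots$, pins down $f_\infty(q)$ by a finite computation: one writes $f_\infty(q) = \frac{a(q)}{b(q)}$, imposes that $f_\infty(1) = \infty$ (so $b(1) = 0$) and that enough low-complexity orbit elements have the right specializations, and solves. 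I expect this to reduce to showing a small linear-algebraic system over $\ZZ(q)$ has exactly the two stated solutions; the bulk of the work is organizing the finitely many constraints so that two suffice, e.g. the constraints coming from $x = 0$ and $x = -1$ together with the parabolic-fixed-point analysis of $R_q$.
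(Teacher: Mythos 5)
Your core computation is the same as the paper's: the paper's Appendix C reduces the orbit condition to the stabilizer condition $\Stab_{\PSL_{2,q}(\ZZ)}(f)\simeq\Stab_{\PSL_2(\ZZ)}(\infty)=\langle R\rangle$, hence to the single equation $R_q\cdot f=f$, i.e. $qf+1=f$, whose solutions in $\PP^1(\ZZ(q))$ are exactly $\frac10$ and $\frac1{1-q}$. That part of your paragraph 2 is correct and is essentially the whole proof.

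However, you have the logic of the equivalence inverted, and this leads you to manufacture a ``main obstacle'' that does not exist and to propose a plan for it that would not work. You claim uniqueness fails ``precisely when $M\cdot\infty\neq M'\cdot\infty$ but $M_q\cdot f=M'_q\cdot f$,'' i.e. when the stabilizer is too \emph{large}. This scenario is impossible: evaluating $M_q\cdot f=M'_q\cdot f$ at $q=1$ (the matrices $M_q$ specialize to $M$, and $f(1)=\infty$) forces $M\cdot\infty=M'\cdot\infty$. So the stabilizer of $f$ can never be larger than (the image of) $\langle R\rangle$, and your ``second requirement'' is automatic -- in particular no check of the full stabilizers of $\frac10$ and $\frac1{1-q}$ is needed (and your description of the stabilizer of $\frac1{1-q}$ as an $S_qR_qS_q^{-1}$-conjugate is wrong: since $\frac1{1-q}$ is itself a fixed point of $R_q$, its stabilizer is $\langle R_q\rangle$). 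The genuine failure mode is the opposite one: if $R_q\cdot f\neq f$, then $f$ and $R_q\cdot f$ are two \emph{distinct} orbit elements both specializing to $\infty$, violating ``exactly one.'' That one-line observation \emph{is} the converse direction; it immediately forces $R_q\cdot f=f$ and hence $f\in\{\frac10,\frac1{1-q}\}$. Your paragraph 3, by contrast, proposes to pin $f$ down by ``imposing that enough low-complexity orbit elements have the right specializations'' -- but every orbit element $M_q\cdot f$ specializes to $M\cdot\infty$ for \emph{any} $f$ over $\infty$, so those constraints are vacuous and that computation cannot single out the two solutions. Delete paragraph 3, replace the inverted failure-mode claim with the evaluation-at-$q=1$ argument above, and the proof is complete and identical in substance to the paper's.
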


This property, which will be proved in Appendix C, leads to the definition of left and right versions of $q$-rational numbers. 

\begin{defi}
Let $x\in \PP^1(\QQ)$ and let $M \in \PSL_2(\ZZ)$ be such that $M\cdot \infty = x$. The two quantizations of $x$ are elements of $\PP^1(\ZZ(q))$, given by 
\begin{itemize}
\item[$\bullet$] Right version : $[x]^{\sharp}_q = M_q \cdot \frac{1}{0}$,
\item[$\bullet$] Left version : $[x]^{\flat}_q = M_q \cdot \frac{1}{1-q}$.
\end{itemize} 
\end{defi}

In the rest of the article, except when necessary, we will skip the subscript $``q"$ in the notation of $q$-rational numbers, writing $[x]^{\sharp}$ instead of $[x]_q^{\sharp}$ and $[x]^{\flat}$ instead of $[x]_q^{\flat}$.

Note that the right version is the original one, defined by Morier-Genoud and Ovsienko \cite{MGO}. In particular, it recovers the usual $q$-integers of Gauss : for $n\in \NN$,
$$
[n]^{\sharp} = \frac{q^n-1}{q-1} = 1+q+q^2 +\cdots +q^{n-1}.
$$

\noindent The left version was defined in \cite{BBL} by Bapat, Becker and Licata, introducing this terminology of left and right $q$-numbers. The left $q$-integers are :
$$
[n]^{\flat} = \frac{q^{n+1}-q^n + q^{n-1} - 1}{q-1} = 1+q+q^2 + \cdots +q^{n-2} + q^n.
$$

The $q$-deformation of a real irrational number $x$ relies on the approximation of $x$ by any sequence of rational numbers converging to $x$. The Taylor series coming from the $q$-deformations of the rational numbers approaching $x$ stabilize and converge toward a formal Laurent series denoted by $[x]_q$ and being the $q$-deformation of $x$ by definition \cite{MGOr}. For more precise statements, see section 4.

We get two quantization maps, depending on which version one choose for rational numbers.

$$
\begin{array}{c c c c}
Q^{\square} : & \RR & \longrightarrow & \ZZ[[q]][q^{-1}] \\
			& x & \longmapsto & \begin{cases}
								[x]^{\square} \text{ if } x\in \QQ\\
								[x]_q \text{ else}
								\end{cases}
\end{array}, \text{   with } \square \in \{\sharp, \flat\}. 
$$

\begin{prop}
\label{injective}
The two quantizations maps $Q^{\sharp}$ and $Q^{\flat}$ are injective.
\end{prop}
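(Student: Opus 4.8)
The plan is to reduce the claim to the following two facts: (i) the map $Q^\sharp$ restricted to irrational reals is injective, and on $\QQ$ it is injective as well, with disjoint images; and (ii) the same for $Q^\flat$; then to observe that by the relation $[x]^\flat_q = \bigl([x]^\sharp_{q^{-1}} + q - 1\bigr)\big/\bigl((1-q)[x]^\sharp_{q^{-1}} + q\bigr)$ stated in the introduction, injectivity of $Q^\sharp$ transfers to $Q^\flat$ (the transformation $t \mapsto (t+q-1)/((1-q)t+q)$ being an invertible Möbius transformation over $\ZZ(q)$, and $q \mapsto q^{-1}$ being a bijection). So the real content is the injectivity of $Q^\sharp$, and by symmetry I will focus there.

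First I would handle the rational case. Given $x = r/s$ in lowest terms with $s > 0$, one recovers $x$ from $[x]^\sharp_q$ by specializing $q = 1$, since $[x]^\sharp_q$ evaluated at $q=1$ is $x$ by construction (the defining orbit condition in Proposition~\ref{two_deformations}). Hence distinct rationals have distinct $q$-deformations, and a rational is never equal to an irrational's deformation for degree reasons: $[x]^\sharp_q \in \ZZ(q)$ for rational $x$, whereas for irrational $x$ the series $[x]_q$ is a genuine formal Laurent series that is not a rational function — this last point follows from \cite{MGOr}, or can be argued directly from the fact that the continued-fraction expansion of $x$ is infinite and non-eventually-periodic data gets encoded faithfully in the coefficients.

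For the irrational case, the key input is the approximation description of $[x]_q$: if $x$ has (regular, positive) continued fraction $[a_0; a_1, a_2, \dots]$ with convergents $p_n/q_n \to x$, then the Laurent series $[p_n/q_n]^\sharp_q$ stabilize coefficient-by-coefficient to $[x]_q$. The strategy is then to show that from $[x]_q$ one can read off the continued fraction of $x$ — equivalently, that the map from infinite continued fractions to stabilized Laurent series is injective. Here is where the positive-continued-fraction formulas developed earlier in the paper (the quantized $\PGL_2(\ZZ)$-action, in particular the clean form of $R_q$ and $S_q$ and the companion relation $[1/x]_q = ((q-1)[x]_q+1)/(q[x]_q + 1 - q)$) do the work: they let me invert the process one partial quotient at a time. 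Concretely, from $[x]_q$ I would extract $a_0 = \lfloor x \rfloor$ by looking at the behavior of the Laurent series under the inverse of $R_q$ (the operation $[x]_q \mapsto [x]_q/q$ corresponding to $x \mapsto x-1$ shifts the valuation/leading term in a controlled way, so iterating it until the structure changes recovers $a_0$), then apply the inverse of $S_q$ (or rather the $[1/x]$ formula) to pass to the deformation of $1/(x - a_0)$, and recurse. Since each step is an invertible transformation over the field $\ZZ(q)$ extended to Laurent series, and the recursion is determined by $[x]_q$ alone, the full sequence $(a_0, a_1, a_2, \dots)$ — hence $x$ — is determined.

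The main obstacle I anticipate is making the "read off $a_n$ from the Laurent series" step rigorous: one must quantify precisely how the valuation and the first few coefficients of $[x]_q$ detect the integer $a_0$, and verify that the stabilization from the rational convergents is compatible with applying the inverse Möbius operations termwise (i.e. that one may legitimately run the rational-case inversion "in the limit"). This requires a careful bookkeeping lemma: that for $n$ large enough the truncation of $[x]_q$ agreeing with $[p_n/q_n]^\sharp_q$ already determines $a_0, \dots, a_k$ for any fixed $k$, which follows from continuity of the convergents and the explicit polynomial dependence of $R_q, S_q$ on $q$. Everything else — the rational case, the degree argument separating $\QQ$ from the irrationals, and the transfer from $Q^\sharp$ to $Q^\flat$ via the stated Möbius relation — is routine once this lemma is in place.
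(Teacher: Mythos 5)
Your proposal follows essentially the same route as the paper: recover the continued fraction of an irrational $x$ from $[x]_q$ one partial quotient at a time, using the quantized M\"obius action to pass from $[x]_q$ to $\left[\frac{1}{x-a_1}\right]_q$ and recursing, with the rational case handled by evaluation at $q=1$ and the left case by symmetry. The one step you flag as the main obstacle --- reading the integer part $a_1$ off the Laurent series --- is exactly what the paper disposes of by citing Theorem 2 of \cite{MGOr}, so your argument is the paper's argument modulo that citation.
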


This proposition will be proved in section 4.2.

\subsection{First extension of symmetries}

The quantization of the action of $\PGL_2(\ZZ)$ on $\PP^1(\RR)$ relies on the $q$-deformation of the transformation $x\mapsto -x$, corresponding to the matrix $N = \begin{pmatrix}
-1 & 0\\
0 & 1\\
\end{pmatrix}$, by 
\begin{equation}
N_q  = \begin{pmatrix}
				-1 & 1-q^{-1}\\
				q-1 & 1\\
				\end{pmatrix}.
\end{equation}

Note that $N_q$ has determinant $-q(q^2-q+1)$. Let us then set 
$$
t= q^2 -q +1.
$$
We will work up to powers of $t$, so we denote by $\Lambda$ the localization of $\ZZ[q,q^{-1}]$ by $t$, and $\overline{\Lambda}$ its field of fractions.
$$
\Lambda := \ZZ[q,q^{-1}]_t \text{ , } \overline{\Lambda} = \ZZ(q).
$$

The relations between $R,S,N$ in $\PGL_2(\ZZ)$ are $S^2 = (RS)^3 = 1$ and $N^2 = (NR)^2 = (NS)^2 = 1$. We prove that they remain unchanged if we replace $R,S,N$ by $R_q,S_q,N_q$ in $\PGL_2(\Lambda)$. The subgroup of $\PGL_2(\Lambda)$ generated by $R_q$, $S_q$ and $N_q$ will be denoted by $\PGL_{2,q}(\ZZ)$, it is isomorphic to $\PGL_2(\ZZ)$.

\begin{thm}\label{qdeformed_GL2}
There is a well-defined group homomorphism 
\begin{equation}
	\label{iso_pgl}
\begin{array}{c c c}
\PGL_2(\ZZ) & \longrightarrow & \PGL_{2,q}(\ZZ) \subset \PGL_2(\Lambda)\\
M & \longmapsto & M_q\\
\end{array},
\end{equation}
\noindent sending $R$, $S$, $N$ to $R_q$, $S_q$, $N_q$, and it is an isomorphism of groups.
\end{thm}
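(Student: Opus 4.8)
The plan is to exploit the standard presentation of $\PGL_2(\ZZ)$ by generators and relations. Recall that $\PGL_2(\ZZ)$ is generated by $R$, $S$, $N$ subject to the relations $S^2 = (RS)^3 = 1$ and $N^2 = (NR)^2 = (NS)^2 = 1$, and moreover that these relations form a \emph{complete} set of relations, i.e. $\PGL_2(\ZZ)$ is the group abstractly presented by $\langle R, S, N \mid S^2, (RS)^3, N^2, (NR)^2, (NS)^2\rangle$. (This is a classical fact; for instance it follows from the amalgam decomposition $\PGL_2(\ZZ) \simeq D_2 *_{\ZZ_2} D_3$, or one can extract it from the action on the Farey tessellation.) Granting this, to produce the homomorphism \eqref{iso_pgl} it suffices to verify that $R_q$, $S_q$, $N_q$ satisfy the same relations in $\PGL_2(\Lambda)$ — that is, as classes of matrices modulo scalars. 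Since we already know from the discussion preceding Proposition~\ref{two_deformations} that $S_q^2$ and $(R_qS_q)^3$ are scalar matrices, the new content is to check $N_q^2$, $(N_qR_q)^2$ and $(N_qS_q)^2$ are scalar. These are three explicit $2\times 2$ matrix computations over $\ZZ[q,q^{-1}]$; the scalars that appear will be (powers of $q$ times) powers of $t = q^2-q+1$, which is precisely why we localize at $t$ and work in $\PGL_2(\Lambda)$. Once the relations hold, the universal property of the presentation gives the homomorphism $M \mapsto M_q$.

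Next I would show the homomorphism is surjective: this is immediate, since by definition $\PGL_{2,q}(\ZZ)$ is the subgroup of $\PGL_2(\Lambda)$ generated by $R_q$, $S_q$, $N_q$, which are the images of $R$, $S$, $N$.

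The substantive point is injectivity. Here the argument I would use is to exhibit a retraction, namely the specialization $q \mapsto 1$. One must be a little careful: $t$ specializes to $1$ at $q=1$ (indeed $t = q^2-q+1$ gives $t(1)=1$), so localizing at $t$ causes no problem, and the ring homomorphism $\Lambda = \ZZ[q,q^{-1}]_t \to \ZZ$, $q\mapsto 1$, induces a group homomorphism $\PGL_2(\Lambda) \to \PGL_2(\ZZ)$. Under this map $R_q \mapsto R$, $S_q \mapsto S$, $N_q \mapsto N$ (check: $N_q$ at $q=1$ is $\begin{pmatrix} -1 & 0 \\ 0 & 1\end{pmatrix} = N$). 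Hence the composite $\PGL_2(\ZZ) \to \PGL_{2,q}(\ZZ) \hookrightarrow \PGL_2(\Lambda) \to \PGL_2(\ZZ)$ is the identity on generators, hence the identity; therefore the first arrow is injective, and combined with surjectivity it is an isomorphism.

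I expect the main obstacle to be a matter of bookkeeping rather than genuine difficulty: one has to be precise about working in $\PGL_2$ versus $\GL_2$, tracking the scalar factors (involving $q$ and $t$) that arise when verifying the relations, and confirming that these factors are units in $\Lambda$ so that the relations really do hold in the projective group — this is exactly the reason for introducing $\Lambda$ and $t$. A secondary point worth stating carefully is the appeal to the completeness of the presentation of $\PGL_2(\ZZ)$; if the paper prefers to be self-contained, one can alternatively package the injectivity via the $q\mapsto 1$ specialization argument above, which sidesteps needing the presentation to be complete for injectivity (though one still needs \emph{some} presentation, or at least that $R,S,N$ generate, to define the map onto $\PGL_{2,q}(\ZZ)$ and to get surjectivity). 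The cleanest write-up combines both: completeness of the presentation for existence of the map, and the $q=1$ retraction for injectivity.
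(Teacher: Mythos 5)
Your proposal is correct and follows essentially the same route as the paper: verify that $R_q$, $S_q$, $N_q$ satisfy the defining relations of the presentation of $\PGL_2(\ZZ)$ (up to units of $\Lambda$, which is why one localizes at $t$), and rule out extra relations via the specialization $q\mapsto 1$, which is exactly the retraction argument the paper uses. Your write-up of the injectivity step as ``the composite $\PGL_2(\ZZ)\to\PGL_{2,q}(\ZZ)\to\PGL_2(\ZZ)$ is the identity'' is a slightly cleaner packaging of the same idea, but there is no substantive difference.
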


This theorem will be proved in section 2.1. We will also describe the traces of these $q$-matrices $M_q$, showing they are palindromic polynomials (section 2.3). In Appendix A, we also prove that these $q$-traces are generating functions of some fence posets.

Under the larger action of $\PGL_2(\ZZ)$ on $q$-deformed rational numbers, the left and right versions form a single orbit in $\PP^1(\ZZ(q))$. On irrational real numbers, the action of $\PGL_2(\ZZ)$ commutes with quantization. The first part of the following theorem dealing with rational numbers will be proved in section 2.1, and the second part on real numbers will be proved in section 4.1. 

\begin{thm}
\label{sharp_and_flat}
Let $M\in \GL_2(\ZZ)$, with $\det(M) = -1$. Let $x\in \PP^1(\QQ)$. Then in $\Lambda$,
$$ 
M_q \cdot [x]^{\sharp} = [M\cdot x]^{\flat} \text{ and } M_q\cdot [x]^{\flat} = [M\cdot x]^{\sharp}.
$$
\noindent Furthermore, for $x\in \RR\setminus \QQ$, 
$$
M_q \cdot [x]_q = [M\cdot x]_q.
$$
\end{thm}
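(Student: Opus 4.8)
The plan is to prove the statement in three stages: first establish the group-theoretic setup for the $\sharp/\flat$ swap on rational numbers, then handle the real case by a limiting/density argument.

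\textbf{Stage 1: the rational case.} I would first reduce to the action of single generators. Write $\PGL_2(\ZZ) = \PSL_2(\ZZ) \sqcup N\cdot\PSL_2(\ZZ)$, so any $M$ with $\det(M)=-1$ can be written $M = N M'$ with $M' \in \PSL_2(\ZZ)$. By Theorem~\ref{qdeformed_GL2}, $M_q = N_q M'_q$ in $\PGL_{2,q}(\ZZ)$. Since $M'_q$ belongs to $\PSL_{2,q}(\ZZ)$, which by construction stabilizes both orbits (the orbit of $[\infty]^\sharp = \tfrac{1}{0}$ and the orbit of $[\infty]^\flat = \tfrac{1}{1-q}$), we have $M'_q \cdot [x]^\sharp = [M'\cdot x]^\sharp$ and similarly for $\flat$. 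So it suffices to prove the single identity
\[
N_q \cdot [y]^\sharp = [N\cdot y]^\flat = [-y]^\flat \qquad\text{and}\qquad N_q\cdot[y]^\flat = [-y]^\sharp
\]
for all $y\in\PP^1(\QQ)$. Equivalently, since $N_q^2 = \Id$ in $\PGL_2(\Lambda)$ (part of Theorem~\ref{qdeformed_GL2}), the two statements are equivalent, so I need only one of them.

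\textbf{Stage 2: proving the single $N_q$-identity.} Here I would argue that $N_q$ conjugates $\PSL_{2,q}(\ZZ)$ to itself — indeed $\PGL_{2,q}(\ZZ)$ normalizes $\PSL_{2,q}(\ZZ)$ as an index-2 subgroup — and track what $N_q$ does to the two base points. The key computation is that $N_q \cdot \tfrac{1}{0} = \tfrac{1-q^{-1}}{1}$, and one should recognize this fraction as $[-\infty]^\flat = [\infty]^\flat$ evaluated appropriately, i.e. that $\tfrac{1-q^{-1}}{1}$ lies in the $\flat$-orbit and is the $\flat$-deformation of $\infty$ up to the action of some element of $\PSL_2(\ZZ)$; actually since $\infty$ is a fixed point of $N$, I want $N_q\cdot[\infty]^\sharp = [\infty]^\flat$, which after projectivizing in $\PP^1(\Lambda)$ means $\tfrac{1-q^{-1}}{1} \sim \tfrac{1}{1-q}$ — and indeed $\tfrac{1}{1-q} = \tfrac{q^{-1}}{q^{-1}-1}$... one checks these represent the same point of $\PP^1$ after clearing, or more carefully one picks the correct base point. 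Then for general $y = M'\cdot\infty$ with $M'\in\PSL_2(\ZZ)$, write $N M' = M'' N$ for a suitable $M''\in\PSL_2(\ZZ)$ (possible since $N$ normalizes $\PSL_2(\ZZ)$), so $N_q\cdot[y]^\sharp = N_q M'_q\cdot\tfrac{1}{0} = M''_q N_q\cdot\tfrac{1}{0} = M''_q\cdot[\infty]^\flat = [M''\cdot\infty]^\flat = [N\cdot y]^\flat$, using that $\PSL_{2,q}(\ZZ)$ preserves the $\flat$-orbit and that $M''\cdot\infty = NM'N^{-1}\cdot\infty = NM'\cdot\infty = N\cdot y$. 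This is the heart of the proof and the step most prone to sign/normalization errors, because everything lives in $\PP^1$ and one must be careful that the "$\flat$-orbit" is genuinely $N_q$-stable and contains exactly one representative over each rational (here Proposition~\ref{two_deformations} is the essential input guaranteeing the orbit structure makes the statement well-posed).

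\textbf{Stage 3: the irrational case.} For $x\in\RR\setminus\QQ$, I would invoke the construction of $[x]_q$ as the stable limit of the Laurent series $[p_n/q_n]^\sharp$ (equivalently $[p_n/q_n]^\flat$, which differ only in high-order terms and converge to the same series) along any sequence of rationals $p_n/q_n \to x$, as recalled in the text and detailed in Section~4. The map induced by $M_q$ on $\PP^1(\Lambda)$ — i.e. $z \mapsto \tfrac{a_q z + b_q}{c_q z + d_q}$ — is continuous for the $q$-adic (valuation) topology on $\ZZ[[q]][q^{-1}]$ away from poles. Since $M\cdot(p_n/q_n) \to M\cdot x$ and $[p_n/q_n]^\sharp \to [x]_q$, applying $M_q$ and passing to the limit gives $M_q\cdot[x]_q = \lim M_q\cdot[p_n/q_n]^\sharp = \lim [M\cdot(p_n/q_n)]^\flat = [M\cdot x]_q$, where the last equality is again because $\flat$ and $\sharp$ deformations of the rationals $M\cdot(p_n/q_n)$ converge to the common limit $[M\cdot x]_q$. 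The main obstacle in this stage is purely technical: one must confirm that $M_q$ does not introduce a pole at the limiting value (i.e. $c_q[x]_q + d_q \neq 0$ in $\ZZ[[q]][q^{-1}]$), which follows because a pole would force $[x]_q$ to be a rational fraction in $q$, contradicting that $x$ is irrational, or alternatively one works directly with the stabilization of Taylor coefficients as in \cite{MGOr}.

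Overall, the main obstacle is Stage 2 — correctly identifying that $N_q$ swaps the two distinguished orbits, which requires the normalizer relation $N\,\PSL_2(\ZZ)\,N^{-1} = \PSL_2(\ZZ)$ lifted to the $q$-level via Theorem~\ref{qdeformed_GL2}, combined with the uniqueness in Proposition~\ref{two_deformations}.
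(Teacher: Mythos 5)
Your overall strategy is exactly the paper's: factor $M$ through $N$ times an element of $\PSL_2(\ZZ)$, use the known $\PSL_{2,q}(\ZZ)$-equivariance of both quantizations to reduce to a single base-point computation with $N_q$, and then handle irrational $x$ by passing to the limit along rational approximations using the convergence of both $[x_n]^{\sharp}$ and $[x_n]^{\flat}$ to $[x]_q$ (the paper's Proposition \ref{right_convergence} and Corollary \ref{left_convergence}) together with continuity of the M\"obius action on Laurent series. Stage~3 in particular matches the paper's argument essentially verbatim, including the remark that a pole cannot occur at an irrational value.

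The one real problem is the pivotal computation in Stage~2. You assert $N_q\cdot\frac{1}{0}=\frac{1-q^{-1}}{1}$ and then try, inconclusively, to reconcile this with $\frac{1}{1-q}$; but $\frac{1-q^{-1}}{1}=\frac{q-1}{q}$ and $\frac{1}{1-q}=\frac{-1}{q-1}$ are \emph{different} points of $\PP^1(\ZZ(q))$ (the cross-products $(q-1)^2$ and $-q$ disagree), so as written the step fails. What saves you is that the value you wrote down is simply wrong: with the convention $\bigl(\begin{smallmatrix}a&b\\ c&d\end{smallmatrix}\bigr)\cdot\frac{r}{s}=\frac{ar+bs}{cr+ds}$ one gets
$$
N_q\cdot\frac{1}{0}=\frac{-1}{q-1}=\frac{1}{1-q}=[\infty]^{\flat}
$$
on the nose, no reconciliation needed, and the rest of your Stage~2 (conjugating $\PSL_2(\ZZ)$ by $N$ at the $q$-level, which is legitimate by Theorem \ref{qdeformed_GL2}) then goes through. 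The paper avoids even the conjugation step by factoring $M=M^{+}N$ with $N$ acting first on the base point, and then treating general $x=A\cdot\infty$ by applying the base case to the determinant $-1$ matrix $MA$; your decomposition $M=NM'$ works too but forces the extra normalizer argument. So: correct architecture, but you must redo the $N_q\cdot\frac{1}{0}$ computation and delete the attempted identification of $\frac{q-1}{q}$ with $\frac{1}{1-q}$.
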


\subsection{Second extension of symmetries}

The next step is to extend again the group acting on $\PP^1(\ZZ(q))$, from $\PGL_2(\ZZ)$ to $\PGL_2(\ZZ)\times \ZZ_2$. To do this, we consider the operator 
$$
\begin{array}{c c c c}
\tau : & \PP^1(\ZZ(q)) & \longrightarrow & \PP^1(\ZZ(q))\\
		& f & \longrightarrow & f(q^{-1})\\
\end{array}.
$$

This is an involution, quantizing the identity transformation. This will twist the action of every matrix $M\in \PGL_2(\ZZ)$. In the following, we will denote by $\cdot $ the composition of operators of $\PP^1(\ZZ(q))$. We define 

\begin{equation}
I_q = \begin{pmatrix}
					1 & q-1\\
					1-q & q\\
					\end{pmatrix}, \text{ and } \overline{I}_q := I_q \cdot \tau.
\end{equation}

This operator $\overline{I}_q$ is another quantization of the identity $\Id$ of $\PGL_2(\ZZ)$, and is almost the same as the $q$-rational transition map of Thomas in \cite{thomas_2024}, which makes transition between left and right $q$-rationals.

For any $M\in \PGL_2(\ZZ)$, the twisted $q$-deformation of $M$ is the operator 
$$
\overline{M}_q := M_q \cdot \overline{I}_q : f(q) \longmapsto M_qI_q \cdot f(q^{-1}).
$$

\begin{thm}
\label{duplication}
The following map is a well-defined group isomorphism.
\begin{equation}
\begin{array}{c c c}
\PGL_2(\ZZ) \times \ZZ_2 & \longrightarrow & \PGL_{2,q}(\ZZ) \times \{\Id,\overline{I}_q\}\\
(M,\epsilon) & \longmapsto & \begin{cases}
								(M_q,\Id) \text{ if } \epsilon = 1\\
								(M_q,\overline{I}_q) \text{ if } \epsilon = -1\\
								\end{cases}
\end{array}.
\end{equation}
\noindent It gives rise to an action 
\begin{equation}
\begin{array}{c c c}
\big(\PGL_{2,q}(\ZZ) \times \{\Id,\overline{I}_q\}\big) \times \PP^1(\ZZ(q)) & \longrightarrow & \PP^1(\ZZ(q))\\
\big((M_q,T),f\big) & \longmapsto & M_q\cdot T \cdot f = \begin{cases}
															M_q \cdot f \text{ if } T = \Id\\
															\overline{M}_q \cdot f \text{ if } T = \overline{I}_q\\
															\end{cases}
\end{array}.
\end{equation}
\end{thm}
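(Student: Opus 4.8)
The plan is to verify that the assignment $(M,\epsilon)\mapsto (M_q, \Id \text{ or } \overline{I}_q)$ respects the group law, and then separately that the associated map on operators of $\PP^1(\ZZ(q))$ is a genuine action. For the first part, recall from Theorem~\ref{qdeformed_GL2} that $M\mapsto M_q$ is a group homomorphism, so it suffices to understand how $\overline{I}_q$ interacts with the $M_q$'s. Since $\ZZ_2\times\PGL_2(\ZZ)$ is a direct product, I would first check that conjugation by $\overline{I}_q$ fixes every $M_q$, i.e. that $\overline{I}_q\cdot M_q = M_q\cdot \overline{I}_q$ as operators on $\PP^1(\ZZ(q))$. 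Unwinding the definition $\overline{I}_q = I_q\cdot\tau$ and using $\tau M_q(q) \cdot \tau = M_q(q^{-1})$ (because $\tau$ substitutes $q\mapsto q^{-1}$ and is an involution), this reduces to the matrix identity $I_q\, M_q(q^{-1}) = M_q(q)\, I_q$ in $\PGL_2(\Lambda)$. I would prove this by checking it on the generators $R,S,N$ — that is, $I_q R_q(q^{-1}) = R_q(q) I_q$, and similarly for $S$ and $N$, up to a scalar in $\overline{\Lambda}^\times$ — which are three finite $2\times2$ computations; since both sides are group homomorphisms in $M$ (the left side composed with $q\mapsto q^{-1}$, the right side as is), checking on generators suffices.

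Next I would verify $\overline{I}_q^{\,2} = \Id$ in $\PGL$, i.e. that $\overline{I}_q$ squares to a scalar. Computing $\overline{I}_q^{\,2} = I_q\cdot\tau\cdot I_q\cdot\tau = I_q \, I_q(q^{-1})$ (since $\tau I_q(q)\tau = I_q(q^{-1})$ and $\tau^2=\Id$), this is the single check that $I_q(q)\,I_q(q^{-1})$ is a scalar matrix, which one reads off from the explicit $2\times2$ matrix $I_q = \begin{pmatrix} 1 & q-1\\ 1-q & q\end{pmatrix}$. Together with the commutation relation above, this shows $\{\Id,\overline{I}_q\}$ is a central order-$2$ subgroup of $\PGL_{2,q}(\ZZ)\times\{\Id,\overline{I}_q\}$ (viewing these as operators modulo scalars), and that the latter is the internal direct product $\PGL_{2,q}(\ZZ)\times\{\Id,\overline{I}_q\}\cong \PGL_2(\ZZ)\times\ZZ_2$. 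Injectivity of the stated map then follows since $M\mapsto M_q$ is already injective by Theorem~\ref{qdeformed_GL2} and $\overline{I}_q\notin\PGL_{2,q}(\ZZ)$ — the latter point requires a small argument, e.g. observing that $\overline{I}_q$ is not a fractional-linear operator over $\ZZ(q)$ because it genuinely involves the substitution $q\mapsto q^{-1}$, hence cannot coincide with any $M_q$ which acts $\overline{\Lambda}$-linearly.

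For the second part, that the formula $\big((M_q,T),f\big)\mapsto M_q\cdot T\cdot f$ defines an action, I would note this is essentially automatic once the first part is established: the target $\PGL_{2,q}(\ZZ)\times\{\Id,\overline{I}_q\}$ has been identified as a group of operators on $\PP^1(\ZZ(q))$ under composition, and the proposed action is just the tautological evaluation of an operator on an element. The only thing to check is compatibility, i.e. that $(M_q,T)\cdot(M'_q,T')$ acts as the composite; writing out the two cases $T=\Id$ and $T=\overline{I}_q$ and using $\overline{M}_q\cdot M'_q = M_q\cdot\overline{I}_q\cdot M'_q = M_q\cdot M'_q\cdot\overline{I}_q = (MM')_q\cdot\overline{I}_q = \overline{(MM')}_q$ (invoking the commutation relation from Step~1 and the homomorphism property) handles it. I would also remark that $\PP^1(\ZZ(q))$ is genuinely stable under all these operators: $M_q$ acts by fractional-linear transformations with coefficients in $\Lambda\subset\ZZ(q)$, and $\tau$ maps $\ZZ(q)$ to itself, so $\overline{M}_q$ does too.

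The main obstacle I expect is purely bookkeeping rather than conceptual: one must be scrupulous about working in $\PGL$ rather than $\GL$, so every matrix identity is only an equality up to a unit scalar in $\overline{\Lambda}^\times = \ZZ(q)^\times$, and one should track the scalar factors (powers of $q$ and of $t = q^2-q+1$) to be sure they are invertible in $\Lambda$ and hence harmless projectively — this is exactly why the paper localized at $t$. The commutation identity $I_q M_q(q^{-1}) = (\text{scalar})\, M_q(q) I_q$ on the generator $N$, where $N_q$ already has the awkward determinant $-q(q^2-q+1)$, is the place where this requires the most care. Everything else is a small finite computation with explicit $2\times2$ matrices.
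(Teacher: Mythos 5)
Your proposal is correct and follows essentially the same route as the paper: the paper's Section~3.1 proposition establishes exactly the commutation $\overline{I}_q\cdot M_q = M_q\cdot\overline{I}_q$ by the generator computations with $R_q$, $S_q$, $N_q$ (run in the reverse direction, deriving $I_q$ from the commutation requirement rather than verifying it), and the subsequent remark records $\overline{I}_q^{\,2}=q^{-1}t\,\Id$, from which the direct-product structure and the action follow just as you describe. Your care about scalars in $\Lambda$ and the observation that $\overline{I}_q$ is only semilinear (hence not in $\PGL_{2,q}(\ZZ)$) are consistent with, and slightly more explicit than, what the paper writes.
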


The action of twisted matrices of determinant $-1$ commutes with left and right quantizations whereas in determinant $1$ they are exchanged.

\begin{thm}
\label{sharp_and_flat_bis}
Let $M \in \PGL_2(\ZZ)$. Let $x\in \PP^1(\QQ)$. If $\det(M) = -1$, then 
$$
\overline{M}_q \cdot [x]^{\sharp} = [M\cdot x]^{\sharp} \text{ and } \overline{M}_q \cdot [x]^{\flat} = [M\cdot x]^{\flat}.
$$
\noindent A contrario, if $\det(M) = 1$, then 
$$
\overline{M}_q \cdot [x]^{\sharp} = [M\cdot x]^{\flat} \text{ and } \overline{M}_q \cdot [x]^{\flat} = [M\cdot x]^{\sharp}.
$$
\end{thm}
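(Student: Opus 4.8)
The plan is to reduce the whole statement to the single case $M = \Id$ with $\epsilon = -1$, that is, to first pin down how the operator $\overline{I}_q$ acts on the left and right quantizations of an arbitrary rational, and then to bootstrap to a general $M$ by combining the direct-product structure of Theorem~\ref{duplication} with Theorem~\ref{sharp_and_flat}.

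First I would prove the auxiliary claim that $\overline{I}_q \cdot [x]^{\sharp} = [x]^{\flat}$ and $\overline{I}_q \cdot [x]^{\flat} = [x]^{\sharp}$ for every $x \in \PP^1(\QQ)$. By transitivity of the $\PSL_2(\ZZ)$-action it suffices to check $x = \infty$ and then transport. For $x = \infty$ this is a direct computation: since $[\infty]^{\sharp} = \frac{1}{0}$ is fixed by $\tau$, one gets $\overline{I}_q \cdot \frac{1}{0} = I_q \cdot \frac{1}{0} = \frac{1}{1-q} = [\infty]^{\flat}$, while applying $\tau$ to $[\infty]^{\flat} = \frac{1}{1-q}$ gives $\frac{1}{1-q^{-1}} = \frac{q}{q-1}$, whence $\overline{I}_q \cdot [\infty]^{\flat} = I_q \cdot \frac{q}{q-1} = \frac{t}{0} = \frac{1}{0} = [\infty]^{\sharp}$. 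For general $x$, write $x = P\cdot\infty$ with $P \in \PSL_2(\ZZ)$, so that $[x]^{\sharp} = P_q \cdot [\infty]^{\sharp}$ and $[x]^{\flat} = P_q \cdot [\infty]^{\flat}$ (this is well defined by the Definition following Proposition~\ref{two_deformations}, using that $R_q^{\pm 1}$ fix both $\frac{1}{0}$ and $\frac{1}{1-q}$). Now $\overline{I}_q$ commutes with every element of $\PSL_{2,q}(\ZZ)$ inside the group of operators on $\PP^1(\ZZ(q))$ --- this is precisely what the direct-product $\PGL_{2,q}(\ZZ) \times \{\Id,\overline{I}_q\}$ in Theorem~\ref{duplication} encodes (equivalently, $I_q^{-1}M_qI_q = M_{q^{-1}}$ on the generators $R_q,S_q,N_q$). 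Hence $\overline{I}_q \cdot [x]^{\sharp} = \overline{I}_q P_q \cdot [\infty]^{\sharp} = P_q \overline{I}_q \cdot [\infty]^{\sharp} = P_q \cdot [\infty]^{\flat} = [x]^{\flat}$, and symmetrically for the other identity.

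Next, for a general $M \in \PGL_2(\ZZ)$ I would unwind the definition $\overline{M}_q \cdot f = M_q \cdot (\overline{I}_q \cdot f)$ and feed in the previous step. If $\det(M) = 1$, choose the $\SL_2(\ZZ)$-representative; then $M_q \in \PSL_{2,q}(\ZZ)$, so $M_q \cdot [y]^{\sharp} = [My]^{\sharp}$ and $M_q \cdot [y]^{\flat} = [My]^{\flat}$ directly from the definition of the two orbits (write $[y]^{\square} = P_q\cdot[\infty]^{\square}$ and use $M_qP_q = (MP)_q$). Therefore $\overline{M}_q \cdot [x]^{\sharp} = M_q \cdot [x]^{\flat} = [Mx]^{\flat}$ and $\overline{M}_q \cdot [x]^{\flat} = M_q \cdot [x]^{\sharp} = [Mx]^{\sharp}$. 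If $\det(M) = -1$, choose the determinant $-1$ representative; then Theorem~\ref{sharp_and_flat} gives $M_q\cdot[y]^{\flat} = [My]^{\sharp}$ and $M_q\cdot[y]^{\sharp} = [My]^{\flat}$, so $\overline{M}_q \cdot [x]^{\sharp} = M_q \cdot [x]^{\flat} = [Mx]^{\sharp}$ and $\overline{M}_q \cdot [x]^{\flat} = M_q \cdot [x]^{\sharp} = [Mx]^{\flat}$, which is exactly the asserted behaviour.

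I do not expect a serious obstacle here: every computation is light. The care needed is purely bookkeeping --- keeping straight the order of operator composition (so that $\overline{M}_q \cdot f$ means $M_q$ applied to $\overline{I}_q(f)$, never the reverse), working projectively so that stray factors of $t$ and of $q$ may be dropped, and fixing an actual matrix representative in $\GL_2(\ZZ)$ before invoking Theorem~\ref{sharp_and_flat}, which is phrased for genuine matrices and not for classes. The only genuinely structural ingredient is the commutation of $\overline{I}_q$ with $\PSL_{2,q}(\ZZ)$, and that is already packaged into Theorem~\ref{duplication}.
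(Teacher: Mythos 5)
Your proposal is correct and follows essentially the same route as the paper: reduce everything to the fact that $\overline{I}_q$ exchanges $[x]^{\sharp}$ and $[x]^{\flat}$, then compose with the known action of $M_q$ in determinants $\pm 1$ (Theorem~\ref{sharp_and_flat} and the $\PSL_{2,q}(\ZZ)$ case). The only difference is that the paper merely asserts the swap property of $\overline{I}_q$, whereas you verify it at $\infty$ and transport it using the commutation of $\overline{I}_q$ with $\PSL_{2,q}(\ZZ)$ established in Section~3.1 --- a welcome addition, not a divergence.
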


Theorems \ref{sharp_and_flat_bis} and \ref{duplication} will be proved in section 3. 

\subsection{Application to irrational algebraic numbers}

As an illustration of what can be done with the quantized $\PGL_{2}(\ZZ)$, we study $q$-deformations of algebraic equations. Even if we know how to quantize irrational numbers, there is still many interesting questions unsolved for $q$-deformed algebraic numbers.
 
The case of algebraic numbers of degree $2$ has already been fully understood by Leclere and Morier-Genoud \cite{Leclere_modular}. In \cite{Ovsienko_Ustinov}, Ovsienko and Ustinov partly covered the case of equations of degree $3$. Here we focus on degree $4$ and degree $6$, considering equations of the form

\begin{equation}
x^4 - bx^2 + 1 = 0 \text{ and } x^6 - 3x^5 - bx^4 + (2b + 5)x^3 - bx^2 - 3x + 1 = 0.
\end{equation}

Exploiting symmetries of these equations, we are able to quantize the Vieta's relations. It is the topic of Section 5.

\section{Quantized action of the projective linear group}

In this section, we give the details of the quantized action of $\PGL_2(\ZZ)$ on $\PP^1(\ZZ(q))$ that we have introduced in section 1.2 above.

\subsection{From $\PSL_2(\ZZ)$ to $\PGL_2(\ZZ)$}

Recall the usual presentation by generators and relations of $\PGL_2(\ZZ)$.

\begin{lemme}
The group $\PGL_2(\ZZ)$ is isomorphic to 
$$
\langle R,S,N ~|~ S^2 = (RS)^3 = 1 \text{ , } N^2 = (NR)^2 = (NS)^2 = 1 \rangle, 
$$
\noindent with 
$$
R = \begin{pmatrix}
		1 & 1 \\
		0 & 1\\
		\end{pmatrix} \text{ , } S = \begin{pmatrix}
										0 & -1\\
										1 & 0\\
										\end{pmatrix} \text{ , } N = \begin{pmatrix}
	-1 & 0 \\
	0 & 1\\
	\end{pmatrix}.
$$
\end{lemme}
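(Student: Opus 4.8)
The plan is to present $\PGL_2(\ZZ)$ as a split extension of $\PSL_2(\ZZ)$ by $\ZZ/2$, and then to read off the presentation from the classical presentation of $\PSL_2(\ZZ)$ together with the conjugation action of $N$.

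First I recall the standard fact that $\PSL_2(\ZZ) \cong \langle R, S \mid S^2 = (RS)^3 = 1\rangle$ (equivalently, $\PSL_2(\ZZ) \cong \ZZ/2 * \ZZ/3$, the $\ZZ/3$ factor being generated by $RS$). Next, the determinant descends to a homomorphism $\det : \PGL_2(\ZZ) \to \{\pm 1\}$, since $\det(-I) = 1$; it is surjective, and its kernel is the image of $\SL_2(\ZZ)$, namely $\PSL_2(\ZZ)$. Because $N^2 = I$ and $\det N = -1$, the class of $N$ has order $2$ in $\PGL_2(\ZZ)$, lies outside $\PSL_2(\ZZ)$, and splits the sequence; hence $\PGL_2(\ZZ) = \PSL_2(\ZZ) \rtimes \langle N \rangle$.

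Then I compute the conjugation action of $N$ on the generators by a direct $2\times 2$ matrix calculation: $NRN^{-1} = R^{-1}$ and $NSN^{-1} = -S$, so in $\PGL_2(\ZZ)$ one has $NRN^{-1} = R^{-1}$ and $NSN^{-1} = S$. Applying the standard presentation of a semidirect product — adjoin to the presentations of the two factors the conjugation relations expressing $N g N^{-1}$ as a word in the generators of $\PSL_2(\ZZ)$ — gives that $\PGL_2(\ZZ)$ is generated by $R, S, N$ with relations $S^2 = (RS)^3 = 1$, $N^2 = 1$, $NRN^{-1} = R^{-1}$, and $NSN^{-1} = S$. Finally I rewrite the last two relations using $N^2 = 1$ (and $S^2 = 1$): $NRN^{-1} = R^{-1}$ is equivalent to $(NR)^2 = 1$, and $NSN^{-1} = S$ is equivalent to $(NS)^2 = 1$. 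This is exactly the claimed presentation.

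The only point requiring care is the invocation of the presentation-of-a-semidirect-product lemma: one must check that $N^2 = 1$ already holds in $\PGL_2(\ZZ)$ (it does, since $N^2 = I$), that $\langle N\rangle$ genuinely complements $\PSL_2(\ZZ)$ inside $\PGL_2(\ZZ)$ (it does, by the determinant), and that the conjugation relations listed are complete, i.e. that they determine $N g N^{-1}$ for all generators $g$ of $\PSL_2(\ZZ)$. Everything else is a routine matrix check. Since this is a classical result on the extended modular group, one may alternatively just cite a standard reference and omit the argument.
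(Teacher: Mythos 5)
Your proof is correct. Note that the paper itself offers no argument for this lemma: it is stated as a recollection of the classical presentation of the extended modular group, so there is no proof to compare against. Your route --- splitting $\det : \PGL_2(\ZZ) \to \{\pm 1\}$ by the class of $N$ to get $\PGL_2(\ZZ) \cong \PSL_2(\ZZ) \rtimes \ZZ/2$, computing $NRN^{-1} = R^{-1}$ and $NSN^{-1} = -S \equiv S$, invoking the standard presentation of a semidirect product, and rewriting the conjugation relations as $(NR)^2 = (NS)^2 = 1$ using $N^2 = S^2 = 1$ --- is a standard and complete justification, and all the matrix computations check out.
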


The first two generators and relations are those defining $\PSL_2(\ZZ)$, which have already been quantized in \cite{MGO} and \cite{Leclere_modular}. Thus we just have to define a quantized version of $N$ and check the three last relations to have a $q$-deformed version of $\PGL_2(\ZZ)$.

\begin{defi}
Put $N_q = \begin{pmatrix}
-1 & 1-q^{-1}\\
q-1 & 1\\
\end{pmatrix}$. 
\end{defi}

\noindent We will work in the localization of $\ZZ[q,q^{-1}]$ by $t = 1-q+q^2$.
$$
\Lambda := \ZZ[q,q^{-1}]_t \text{ , } \overline{\Lambda} = \ZZ(q).
$$

\begin{prop} 
In $\PGL_2(\Lambda)$, the $q$-deformed matrices $R_q$, $S_q$ and $N_q$ satisfy 
$$N_q^2 = (N_q R_q)^2 = (N_qS_q)^2 = t\Id,$$
\noindent and there is no other relation between $R_q$, $S_q$ and $N_q$.
\end{prop}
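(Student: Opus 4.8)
The plan is to split the statement into its two assertions: that the three displayed identities hold, and that, together with the relations already known for $\PSL_{2,q}(\ZZ)$, they account for all relations among $R_q,S_q,N_q$.

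\emph{Verifying the identities.} The first step is a direct computation in $\GL_2(\Lambda)$ with the matrices $R_q$, $S_q$, $N_q$ written above. Multiplying out $N_q^2$, $(N_qR_q)^2$ and $(N_qS_q)^2$, one finds that each is a scalar matrix, equal to $q^{-1}t\,\Id$, $t\,\Id$ and $q^{-2}t\,\Id$ respectively. Since $q$ and $t$ are units of $\Lambda$, all three equal $\Id$ in $\PGL_2(\Lambda)$ — which is the displayed statement once one works up to powers of $q$. Recall also that $S_q^2 = (R_qS_q)^3 = \Id$ hold in $\PGL_2(\Lambda)$, by the computations of \cite{MGO, Leclere_modular}. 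Hence $R_q,S_q,N_q$ satisfy every relation of the presentation of $\PGL_2(\ZZ)$ recalled in the lemma above.

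\emph{No further relation.} By the universal property of that presentation, the previous paragraph produces a surjective group homomorphism $\phi\colon\PGL_2(\ZZ)\twoheadrightarrow\langle R_q,S_q,N_q\rangle\subseteq\PGL_2(\Lambda)$ sending $R,S,N$ to $R_q,S_q,N_q$; saying that there is no other relation is precisely saying that $\phi$ is injective. To prove injectivity I would exhibit a left inverse by specialization at $q=1$. Since $t|_{q=1}=1-1+1=1$ is invertible in $\ZZ$, evaluation $q\mapsto 1$ extends to a ring homomorphism $\Lambda=\ZZ[q,q^{-1}]_t\to\ZZ$, hence to a group homomorphism $\psi\colon\PGL_2(\Lambda)\to\PGL_2(\ZZ)$ (projectivization commutes with applying a ring map). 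One reads off $\psi(R_q)=R$, $\psi(S_q)=S$, $\psi(N_q)=N$, so $\psi\circ\phi$ fixes every generator of $\PGL_2(\ZZ)$ and is the identity. Therefore $\phi$ is injective, hence an isomorphism onto $\langle R_q,S_q,N_q\rangle$; this also yields Theorem~\ref{qdeformed_GL2}.

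\emph{Where the work is.} There is no genuinely hard step here: the argument follows the classical scheme ``check the quantized relations, invoke the known presentation, retract via $q\to 1$''. The only points needing care are bookkeeping ones. In the linear ($\GL_2$) computation one must keep track of the stray unit scalars $q^{k}t$ and observe that they disappear in the projective quotient — this is exactly why the coefficient ring is taken to be the localization $\Lambda$ rather than $\ZZ[q,q^{-1}]$ — and one must confirm that evaluation at $q=1$ really descends to $\Lambda$, which is where $t(1)=1$ is used.
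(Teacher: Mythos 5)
Your proposal is correct and follows essentially the same route as the paper: the three identities are checked by direct matrix multiplication (your scalars $q^{-1}t$, $t$, $q^{-2}t$ are right, and they vanish in $\PGL_2(\Lambda)$), and the absence of further relations is obtained by specializing at $q=1$, where $t(1)=1$ guarantees the evaluation map descends to $\Lambda$ and retracts the quantized group onto $\PGL_2(\ZZ)$. Your phrasing via an explicit left inverse $\psi$ of the presentation map $\phi$ is a slightly more careful packaging of the paper's argument, but it is the same idea.
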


\begin{proof}
Easy computations ensure the relations to be true. If there was another relation in $\PGL_2(\Lambda)$ of the form $\mathfrak{R}(R_q,S_q,N_q) = \Id$, then by evaluating $q=1$ we would get a relation in $\PGL_2(\ZZ)$ between $R$, $S$ and $N$, so $\mathfrak{R}(R,S,N)$ would be in the ideal generated by the previous relations, and then $\mathfrak{R}(R_q,S_q,N_q)$ was not new.
\end{proof}

Therefore over $\Lambda$, the relations of $\PGL_2(\ZZ)$ and only them are satisfied by $R_q,S_q$ and $N_q$, and Theorem \ref{qdeformed_GL2} is proven. 

\begin{ex}
	Now we have a $q$-deformation of the operation $x\mapsto 1/x$ given by the matrix $J = \begin{pmatrix}
					0 & 1\\
					1 & 0\\
					\end{pmatrix}$ of determinant $-1$.
	$$ J = NS \overset{q}{\longrightarrow }
	J_q = N_qS_q = \begin{pmatrix}
		q-1 & 1 \\
		q & 1-q\\
	\end{pmatrix}.
	$$
\noindent Note that we can choose $R$ and $J$ to be generators of $\PGL_2(\ZZ)$, as we have the following relations
$$
R^{-1}JRJR^{-1} = S \text{ and } JRJR^{-1}JR = N.
$$
\noindent In the quantized setting, these relations become
$$
R_q^{-1}J_qR_qJ_qR_q^{-1} = tS_q \text{ and } J_qR_qJ_qR_q^{-1}J_qR_q = tqN_q.
$$
\end{ex} 

Let us consider the quantized action
$$
\begin{array}{c c c}
\PGL_{2,q}(\ZZ) \times \PP^1(\ZZ(q)) & \longrightarrow & \PP^1(\ZZ(q))\\
(M_q,f) & \longmapsto & M_q\cdot f\\ 
\end{array}.
$$

First, recall that the action of $\PSL_{2,q}(\ZZ)$ commutes with both left and right quantizations.

\begin{prop}[\cite{Leclere_modular,BBL}] Let $x\in \PP^1(\QQ)$ and let $M\in \PSL_2(\ZZ)$. Then 
$$
M_q \cdot [x]^{\flat} = [M\cdot x]^{\flat} \text{ and } M_q \cdot [x]^{\sharp} = [M\cdot x]^{\sharp}.
$$
\end{prop}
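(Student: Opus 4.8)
The plan is to reduce everything to the base point $\infty$ and to exploit that $M \mapsto M_q$ restricts to a group homomorphism on $\PSL_2(\ZZ)$ (this is the $\PSL_2$-analogue of Theorem~\ref{qdeformed_GL2}, recorded in the excerpt via the isomorphism $\PSL_2(\ZZ) \xrightarrow{\sim} \PSL_{2,q}(\ZZ)$). The only substantive ingredient beyond that is that the two chosen base points $[\infty]^{\sharp}=\tfrac10$ and $[\infty]^{\flat}=\tfrac1{1-q}$ are fixed by $R_q$.

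First I would check that $[x]^{\square}$ is well defined, i.e.\ independent of the matrix $P\in\PSL_2(\ZZ)$ chosen with $P\cdot\infty = x$. Two such matrices differ by an element of $\Stab_{\PSL_2(\ZZ)}(\infty)=\langle R\rangle$, so $P' = P R^n$ for some $n\in\ZZ$. A one-line computation gives $R_q\cdot\tfrac10 = \tfrac{q}{0}=\tfrac10$ and $R_q\cdot\tfrac1{1-q} = \tfrac{q+(1-q)}{1-q}=\tfrac1{1-q}$, so $R_q$ fixes both base points. Using the homomorphism property, $P'_q\cdot[\infty]^{\square} = P_q R_q^{\,n}\cdot[\infty]^{\square} = P_q\cdot[\infty]^{\square}$, hence $[x]^{\square}$ depends only on $x$.

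Then, for $M\in\PSL_2(\ZZ)$ and $x\in\PP^1(\QQ)$, I would pick $P\in\PSL_2(\ZZ)$ with $P\cdot\infty = x$, so that $MP\cdot\infty = M\cdot x$ and, by definition, $[M\cdot x]^{\square} = (MP)_q\cdot[\infty]^{\square}$. Since the quantization is a group homomorphism, $(MP)_q = M_q P_q$ in $\PGL_2(\ZZ[q,q^{-1}])$, and as scalars act trivially on $\PP^1(\ZZ(q))$ we get
$$
[M\cdot x]^{\square} \;=\; M_q P_q\cdot[\infty]^{\square} \;=\; M_q\cdot\bigl(P_q\cdot[\infty]^{\square}\bigr) \;=\; M_q\cdot[x]^{\square},
$$
which is the asserted equality for both $\square=\sharp$ and $\square=\flat$.

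There is no genuine obstacle here: the argument is essentially a restatement of the transitivity of the quantized $\PSL_2$-action relative to a fixed orbit representative, the only point requiring care being the well-definedness of $[x]^{\square}$, which rests exactly on $R_q$ stabilizing $\tfrac10$ and $\tfrac1{1-q}$. This is the content already present in \cite{Leclere_modular, BBL}.
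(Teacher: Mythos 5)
Your argument is correct. The paper does not prove this proposition itself (it is recalled from \cite{Leclere_modular,BBL}), but your reduction to the base point $\infty$ via transitivity, the homomorphism property of $M\mapsto M_q$, and the observation that $R_q$ fixes both $\tfrac{1}{0}$ and $\tfrac{1}{1-q}$ is exactly the mechanism built into the paper's definition of $[x]^{\sharp}$ and $[x]^{\flat}$, and it is the same scheme the paper uses to prove the analogous statement for determinant $-1$ immediately afterwards. Nothing is missing.
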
 

We have a similar result for matrices of determinant $-1$. This is the first part of Theorem \ref{sharp_and_flat}.

\begin{prop}
Let $M\in \PGL_2(\ZZ)$, with $\det(M) = -1$. Let $x\in \PP^1(\QQ)$. Then in $\Lambda$,
$$ 
M_q \cdot [x]^{\sharp} = [M\cdot x]^{\flat} \text{ and } M_q\cdot [x]^{\flat} = [M\cdot x]^{\sharp}.
$$
\end{prop}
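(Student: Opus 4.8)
The plan is to reduce the statement to the two generators of the determinant-$(-1)$ coset, together with what is already known for $\PSL_{2,q}(\ZZ)$. Any $M\in\PGL_2(\ZZ)$ with $\det(M)=-1$ can be written as $M = NM'$ (or $JM'$) with $M'\in\PSL_2(\ZZ)$; equivalently, the determinant-$(-1)$ coset is $N\cdot\PSL_2(\ZZ)$. Since the quantization map $M\mapsto M_q$ of Theorem \ref{qdeformed_GL2} is a group homomorphism (modulo powers of $t$, which act trivially on $\PP^1(\ZZ(q))$), one has $M_q\cdot f = N_q\cdot(M'_q\cdot f)$ as operators on $\PP^1(\ZZ(q))$. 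By the preceding proposition (the $\PSL_{2,q}$ case), $M'_q\cdot[x]^{\sharp} = [M'\cdot x]^{\sharp}$ and $M'_q\cdot[x]^{\flat}=[M'\cdot x]^{\flat}$, so it suffices to verify the claim for $M=N$ itself, i.e. to show
\begin{equation*}
N_q\cdot[y]^{\sharp} = [-y]^{\flat} \quad\text{and}\quad N_q\cdot[y]^{\flat} = [-y]^{\sharp}
\end{equation*}
for all $y\in\PP^1(\QQ)$. (One should double-check the coset decomposition is compatible: writing $M = NM'$ gives $M\cdot x = N\cdot(M'\cdot x)$, and applying the $N$-case to $y = M'\cdot x$ closes the loop.)

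Next I would establish the $N$-case. By definition $[y]^{\sharp} = L_q\cdot\tfrac10$ and $[y]^{\flat}=L_q\cdot\tfrac1{1-q}$ for any $L\in\PSL_2(\ZZ)$ with $L\cdot\infty = y$. Since $N L N^{-1}\in\PSL_2(\ZZ)$ maps $\infty$ to $N\cdot y = -y$, and again using that $M\mapsto M_q$ is a homomorphism, we get $N_q\cdot[y]^{\sharp} = N_q L_q\cdot\tfrac10 = (N_qL_qN_q^{-1})\cdot(N_q\cdot\tfrac10)$, and $N_qL_qN_q^{-1}$ is the $q$-deformation of $NLN^{-1}\in\PSL_2(\ZZ)$. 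So by the $\PSL_{2,q}$ proposition this equals $[-y]^{\square}$ where $\square$ records whether $N_q\cdot\tfrac10$ is the $\sharp$ or $\flat$ deformation of $N\cdot\infty=\infty$. A direct computation with $N_q = \begin{pmatrix}-1 & 1-q^{-1}\\ q-1 & 1\end{pmatrix}$ gives $N_q\cdot\tfrac10 = \tfrac{-1}{q-1} = \tfrac{1}{1-q} = [\infty]^{\flat}$, and $N_q\cdot\tfrac1{1-q}$: the numerator is $-1+(1-q^{-1})(1-q) = -1 + 1 - q - q^{-1} + 1 = 1 - q - q^{-1}$ and the denominator is $(q-1) + (1-q) = 0$, so $N_q\cdot\tfrac1{1-q} = \tfrac{1}{0} = [\infty]^{\sharp}$. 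Hence $N_q$ swaps $[\infty]^{\sharp}$ and $[\infty]^{\flat}$, which is exactly the $N$-case; combined with the conjugation argument this proves $N_q\cdot[y]^{\sharp}=[-y]^{\flat}$ and $N_q\cdot[y]^{\flat}=[-y]^{\sharp}$, and then the general case follows as above.

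Alternatively — and perhaps more cleanly for the paper — one can argue uniformly: for $M\in\PGL_2(\ZZ)$ with $\det(M)=-1$ and $L\in\PSL_2(\ZZ)$ with $L\cdot\infty=x$, the product $ML$ has determinant $-1$, so $ML = N L'$ for a unique $L'\in\PSL_2(\ZZ)$, with $L'\cdot\infty = N^{-1}ML\cdot\infty = N^{-1}\cdot(M\cdot x) = -(M\cdot x)$. Then $M_q\cdot[x]^{\sharp} = (M_qL_q)\cdot\tfrac10 = (N_qL'_q)\cdot\tfrac10 = N_q\cdot[L'\cdot\infty]^{\sharp} = N_q\cdot[-(M\cdot x)]^{\sharp}$ — but this still requires knowing how $N_q$ acts on $\sharp$-rationals, so it again reduces to the $\infty$ computation above, just organized differently. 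I would present whichever is shorter given the earlier notation; the homomorphism property from Theorem \ref{qdeformed_GL2} is doing all the structural work.

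The main obstacle, such as it is, is bookkeeping rather than mathematics: one must be careful that the homomorphism $M\mapsto M_q$ only holds up to powers of $t$ (Proposition above: $N_q^2 = t\Id$, etc.), and argue that these scalar factors are invisible in $\PP^1(\ZZ(q))$ since projectively $t\Id$ acts as the identity — so all the identities above, a priori valid in $\PGL_2(\Lambda)$, descend correctly to the action on $\PP^1(\ZZ(q))$. A secondary point to get right is that the decomposition $M = NL'$ genuinely lands in $\PSL_2(\ZZ)$ (determinant exactly $+1$, not just $\pm1$), which is automatic since $\det N = -1$, but worth stating. Once these are noted, the proof is essentially the $\infty$-computation plus the already-proven $\PSL_{2,q}(\ZZ)$ case.
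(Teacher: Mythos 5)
Your argument is correct and follows essentially the same route as the paper: both reduce the claim to the already-proven $\PSL_{2,q}(\ZZ)$ case plus the homomorphism property of $M\mapsto M_q$ (up to powers of $t$, trivial on $\PP^1(\ZZ(q))$) plus the single computation that $N_q$ exchanges $\tfrac{1}{0}$ and $\tfrac{1}{1-q}$. The only difference is organizational: the paper factors $M = M^{+}N$ with $N$ on the \emph{right}, so $N_q$ acts on $[\infty]^{\sharp}$ directly and the general case follows by applying the $\infty$-case to $MA$ where $x=A\cdot\infty$, which avoids the conjugation step $N_qL_qN_q^{-1}$ your left-factorization $M=NM'$ requires.
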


\begin{proof}
We can write $M = M^+ N$ with $\det(M^+) = 1$, so that $M_q = M^+_q N_q$. Let us consider first the case $x = \infty$. Then
$$
M_q\cdot [\infty]^{\sharp} = M^+_qN_q \cdot \frac{1}{0} = M^+_q \cdot \frac{1}{1-q} = [M^+ \cdot \infty]^{\flat}.
$$
\noindent Now for $x\in \QQ$, there is a matrix $A \in \PSL_2(\ZZ)$ such that $A\cdot \infty = x$, and then
$$
M_q \cdot [x]^{\sharp} = M_q A_q \cdot [\infty]^{\sharp} = [M A \cdot \infty]^{\flat} = [M\cdot x]^{\flat}.
$$
\noindent The equality $M_q \cdot [x]^{\flat} = [M\cdot x]^{\sharp}$ is symmetric. 
\end{proof}

\begin{ex}
Let us take the matrix $N = \begin{pmatrix}
								-1 & 0\\
								0 & 1\\
								\end{pmatrix}$, and $x = n\in \NN^*$. Then
\begin{align*}
[N]_q \cdot [n]^{\sharp} = \frac{-[n]^{\sharp} + 1-q^{-1}}{(q-1)[n]^{\sharp} + 1} &= q^{-n}(-[n]^{\sharp} + 1-q^{-1})\\
&= -q^{-1} - q^{-2} - \cdots - q^{-n} + q^{-n} - q^{-(n+1)}\\
&= -q^{-1} - q^{-2} - \cdots -q^{-(n-1)} - q^{-(n+1)}\\
\end{align*}

\noindent On the other hand, 
$$
[N \cdot n]^{\flat} = [-n]^{\flat} = -q^{-1} - q^{-2} - \cdots - q^{-(n-1)} - q^{-(n+1)}.
$$
\noindent So $[N]_q\cdot [n]^{\sharp} = [N\cdot n]^{\flat}$. In the other way around, 

\begin{align*}
[N]_q \cdot [n]^{\flat} = \frac{-[n]^{\flat} + 1-q^{-1}}{(q-1)[n]^{\flat} + 1} &= -q^{-1}\frac{1+ q^2 + q^3 + \cdots + q^{n-1} + q^{n+1}}{q^{n+1}-q^n+q^{n-1}}\\
&= -q^{-1}\frac{1 - q^n + q^2[n]^{\sharp}}{q^{n-1}t}\\
&= -q^{-1}\frac{[n]^{\sharp}}{q^{n-1}}\\
&= -q^{-1}[n]^{\sharp}_{q^{-1}}\\
&= [-n]^{\sharp}.
\end{align*}

\noindent So $[N]_q \cdot [n]^{\flat} = [-n]^{\sharp}$.
\end{ex}

\begin{ex}
Let us consider now $x = \frac{3}{2}$ and $M = \begin{pmatrix}
	3 & 1 \\
	1 & 0\\
\end{pmatrix}$. Then $M$ decomposes as $M = R^3J$, so 
$$
M_q = R_q^3 J_q = \begin{pmatrix}
	q^4 + q^2 + q & 1 \\
	q & 1-q\\
\end{pmatrix}. 
$$
\noindent On one side, $\left[M\cdot \frac{3}{2}\right]^{\sharp} = \left[\frac{11}{3}\right]^{\sharp} = \frac{q^5 + 2q^4 + 2q^3 + 3q^2 + 2q + 1}{q^2 + q + 1}$. \\
\noindent On the other side, one can compute $\left[\frac{3}{2}\right]^{\flat} = \frac{1+q^2+q^3}{1+q^2}$. Then applying the quantized matrix $M_q$ and cancelling a factor $t$ above and below the fraction, 
$$
M_q \cdot \left[\frac{3}{2}\right]^{\flat} = M_q \cdot \frac{1+q^2+q^3}{1+q^2} = \frac{q^5 + 2q^4 + 2q^3 + 3q^2 + 2q + 1}{q^2+q+1}.
$$

\end{ex}

\subsection{Quantized positive continued fractions}

The initial definition of $q$-rational numbers was based on continued fractions \cite{MGO}. Each rational number $u/v$ can be written as
$$
\frac{u}{v} = a_1 + \frac{1}{a_2 + \frac{1}{\ddots + \frac{1}{a_n}}} , \text{ with } a_1 \in \ZZ \text{ and } a_i \in \NN^*, i\geq 2,
$$

\noindent which is also denoted by $u/v = [a_1,a_2,\cdots,a_n]$. The sequence $(a_1,a_2,\cdots,a_n)$ is unique when the parity of $n$ is fixed. This continued fraction expansion is translated in the matrix setting by 
\begin{equation}
\label{continued_fraction}
\begin{pmatrix}
u & u'\\
v & v'\\
\end{pmatrix}  = \begin{pmatrix}
					a_1 & 1 \\
					1 & 0 \\
					\end{pmatrix}\begin{pmatrix}
						a_2 & 1 \\
						1 & 0 \\
						\end{pmatrix} \cdots \begin{pmatrix}
							a_n & 1 \\
							1 & 0 \\
							\end{pmatrix},
\end{equation}
\noindent where $u'/v' = [a_1,a_2,\cdots,a_{n-1}]$. To work with matrices of determinant $1$, it was necessary to choose $n$ even and to consider pairs of elementary matrices, using the relation 
$$
R_q^{a_1}J_q R_q^{a_2} J_q = tR_q^{a_1}L_q^{a_2}, \text{ where } L_q = \begin{pmatrix}
																		q & 0\\
																		q & 1\\
																		\end{pmatrix}.
$$  

The following property is the $q$-deformation of equation \eqref{continued_fraction}.

\begin{prop}[Proposition 4.3 of \cite{MGO}]
	\label{prop_de_mgo}
Let $u/v$ be a rational number and let $[a_1,a_2,\cdots,a_{2m}]$ be its even positive continued fraction expansion. Let us set $u_{n-1}/v_{n-1} = [a_1,a_2,\cdots,a_{2m-1}]$ and 
$$
\left[\frac{u}{v}\right]^{\sharp} = \frac{U^{\sharp}}{V^{\sharp}} \text{ and } \left[\frac{u_{2m-1}}{v_{2m-1}}\right]^{\sharp} = \frac{U_{2m-1}^{\sharp}}{V_{2m-1}^{\sharp}}.
$$
\noindent Then it holds
$$
R_q^{a_1}J_q R_q^{a_2}J_q \cdots R_q^{a_{2m}}J_q = q^{\min(0,a_1)}t^{m}\begin{pmatrix}
												qU^{\sharp} & U_{2m-1}^{\sharp}\\
												qV^{\sharp} & V_{2m-1}^{\sharp}\\
												\end{pmatrix}.
$$
\end{prop}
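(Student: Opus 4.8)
The plan is to proceed by induction on $m$, the number of pairs of elementary matrices. The key idea is to track how the quantized matrix product relates to the numerators and denominators of the right $q$-deformations of the two successive convergents $u/v = [a_1,\dots,a_{2m}]$ and $u_{2m-1}/v_{2m-1} = [a_1,\dots,a_{2m-1}]$. I would first record the elementary quantized matrices: $R_q^{a} = \left(\begin{smallmatrix} q^a & q^{a-1}+\cdots+1 \\ 0 & 1 \end{smallmatrix}\right)$ when $a\geq 0$, with the appropriate adjustment (and the factor $q^{\min(0,a_1)}$) when $a_1 < 0$, and $J_q = N_qS_q = \left(\begin{smallmatrix} q-1 & 1 \\ q & 1-q \end{smallmatrix}\right)$ of determinant $-t$; this explains where the power $t^m$ comes from, since each pair $R_q^{a_i}J_qR_q^{a_{i+1}}J_q$ contributes determinant $t^2$ but the normalization absorbs one factor of $t$ per pair, consistent with the relation $R_q^{a_1}J_qR_q^{a_2}J_q = tR_q^{a_1}L_q^{a_2}$ quoted just above the statement.

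\textbf{Base case.} For $m=1$, I would directly compute $R_q^{a_1}J_qR_q^{a_2}J_q$ (say first for $a_1\geq 0$) and compare the resulting $2\times 2$ matrix, after factoring out $t$, with $\left(\begin{smallmatrix} qU^{\sharp} & U_1^{\sharp} \\ qV^{\sharp} & V_1^{\sharp} \end{smallmatrix}\right)$, where $[a_1]^{\sharp} = U_1^{\sharp}/V_1^{\sharp}$ (so $V_1^{\sharp}=1$, $U_1^{\sharp} = [a_1]^{\sharp}$) and $[a_1 + 1/a_2]^{\sharp} = U^{\sharp}/V^{\sharp}$. The formulas $[x+1]_q = q[x]_q+1$ and $[-1/x]_q = -1/(q[x]_q)$ from \eqref{formulas}, which generate the action of $R_q$ and $S_q$, give the required identities; the case $a_1<0$ is handled by the extra $q^{\min(0,a_1)}$ bookkeeping.

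\textbf{Inductive step.} Assume the formula for $m$ pairs, and consider $[a_1,\dots,a_{2m+2}]$. I would write the product for $m{+}1$ pairs as (the product for $m$ pairs)$\cdot R_q^{a_{2m+1}}J_qR_q^{a_{2m+2}}J_q$, apply the induction hypothesis to the left factor — getting $q^{\min(0,a_1)}t^m\left(\begin{smallmatrix} qU_{2m}^{\sharp} & U_{2m-1}^{\sharp} \\ qV_{2m}^{\sharp} & V_{2m-1}^{\sharp} \end{smallmatrix}\right)$ where now $U_{2m}^{\sharp}/V_{2m}^{\sharp}$ and $U_{2m-1}^{\sharp}/V_{2m-1}^{\sharp}$ are the $q$-deformations of the last two convergents of the truncated fraction — and then multiply by the computed $2\times 2$ matrix $R_q^{a_{2m+1}}J_qR_q^{a_{2m+2}}J_q = t\left(\begin{smallmatrix} qc & d \\ \ast & \ast\end{smallmatrix}\right)$ for suitable polynomials. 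The crux is then a purely formal matrix identity showing that the new first and second columns are exactly $(qU_{2m+2}^{\sharp}, qV_{2m+2}^{\sharp})$ and $(U_{2m+1}^{\sharp}, V_{2m+1}^{\sharp})$; this amounts to the recursion $U_{n}^{\sharp} = $ (polynomial in $q$ of degree $a_n$)$\cdot U_{n-1}^{\sharp} + q^{\,?}U_{n-2}^{\sharp}$ and similarly for $V$, i.e. the $q$-analogue of the classical convergent recursion, which is precisely what the matrix multiplication $\left(\begin{smallmatrix} a_n & 1 \\ 1 & 0\end{smallmatrix}\right)$ encodes classically. I would verify this recursion once from the relations in \eqref{formulas} and then the inductive step is formal.

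\textbf{Main obstacle.} The genuine bookkeeping difficulty is twofold: first, keeping the powers of $q$ and $t$ exactly right — in particular checking that no stray power of $q$ appears beyond the single $q^{\min(0,a_1)}$ (this is why one works in $\PGL_2$ up to scalars, but the statement claims an honest equality of matrices, so the scalars must be pinned down), and second, confirming that the two columns of the product genuinely reproduce the right $q$-deformations rather than some twisted variant — here one must use that $J_q = N_qS_q$ has determinant $-t$ and determine how the sign/$t$ interacts with the $\sharp$ versus $\flat$ normalization, invoking the already-proven fact (first part of Theorem \ref{sharp_and_flat}) that odd-length products, i.e. an extra determinant $-1$, swap $\sharp$ and $\flat$. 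Pairing the matrices into blocks of two keeps us in the $\sharp$ world throughout, which is exactly why the even continued fraction expansion is used. Once the $q$-convergent recursion is established, the rest is a routine induction.
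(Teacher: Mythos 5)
The paper gives no proof of this statement: it is quoted verbatim (up to the substitution $R_q^{a}J_qR_q^{b}J_q = tR_q^{a}L_q^{b}$, which accounts for the factor $t^{m}$) as Proposition 4.3 of \cite{MGO}, so there is no in-paper argument to compare against. Judged against the original source, your outline is essentially the proof given there: Morier-Genoud and Ovsienko establish the $q$-analogue of the convergent recursion (their Propositions 4.1--4.2), encode it as right-multiplication by $R_q^{a_{2k+1}}L_q^{a_{2k+2}}$, and induct on $m$ exactly as you describe, with the base case being a direct computation of $R_q^{a_1}L_q^{a_2}=\left(\begin{smallmatrix} q^{a_1+a_2}+q[a_1]^{\sharp}[a_2]^{\sharp} & [a_1]^{\sharp} \\ q[a_2]^{\sharp} & 1 \end{smallmatrix}\right)$. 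Two remarks. First, the point you flag as the ``main obstacle'' is indeed where all the content sits, and your sketch leaves it unresolved: since the claim is an honest matrix identity rather than a projective one, you must say what the polynomials $U^{\sharp}$, $V^{\sharp}$ \emph{are} before the factors $q^{\min(0,a_1)}$ and the extra $q$ in the first column mean anything. In \cite{MGO} the numerators and denominators are \emph{defined} by the convergent recursion (so the matrix formula is close to a reformulation of that definition), and their coprimality and normalization are separate theorems; a complete write-up needs to either adopt that definition or prove the normalization. Second, the appeal to Theorem~\ref{sharp_and_flat} is unnecessary here: each block $R_q^{a}J_qR_q^{b}J_q$ is (projectively) of determinant $1$, the first column of the product computes $M_q\cdot[\infty]^{\sharp}$ and the second computes $M_q\cdot[0]^{\sharp}$, so one never leaves the $\sharp$ world and no $\sharp$/$\flat$ exchange has to be controlled in the even case.
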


The elementary matrices involved in the decomposition \ref{continued_fraction} have determinant $-1$, and can be $q$-deformed directly as

$$
R^a J = \begin{pmatrix}
	a & 1\\
	1 & 0\\
\end{pmatrix} \overset{q}{\longrightarrow} R_q^a J_q = \begin{pmatrix}
																q[a]^{\flat} & 1 \\
																q & 1-q\\
																\end{pmatrix}.
$$

With these $q$-deformed matrices of determinant $-1$, we get the left version of Proposition \ref{prop_de_mgo}.

\begin{prop}
\label{shape_matrix_positive}
Let $u/v$ be a rational number and let $[a_1,a_2,\cdots,a_{2m+1}]$ its odd positive continued fraction expansion, with $m\geq 1$. Let us set $u_{2m}/v_{2m} = [a_1,\cdots,a_{2m}]$ and 
$$
\left[\frac{u}{v}\right]^{\flat} = \frac{U^{\flat}}{V^{\flat}} \text{ and } \left[\frac{u_{2m}}{v_{2m}}\right]^{\flat} = \frac{U_{2m}^{\flat}}{V_{2m}^{\flat}}.
$$
\noindent Then it holds
$$R_q^{a_1} J_q R_q^{a_2} J_q \cdots R_q^{a_{2m+1}}J_q = q^{\min(0,a_1)}t^{m} \begin{pmatrix}
																		qU^{\flat} & U_{2m}^{\flat}\\
																		qV^{\flat} & V_{2m}^{\flat}\\
																		\end{pmatrix}.
$$
\end{prop}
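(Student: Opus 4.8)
The statement is the ``left'' analogue of Proposition \ref{prop_de_mgo}, and the natural strategy is to deduce it directly from that proposition together with Theorem \ref{sharp_and_flat}, rather than redoing the induction from scratch. The point is that the matrix product $R_q^{a_1}J_q\cdots R_q^{a_{2m+1}}J_q$ has an odd number of $J_q$-factors, hence determinant equal to $(-1)^{2m+1}=-1$ times the determinant of $R_q^{a_1}\cdots$, i.e. it is a $q$-matrix of determinant $-1$ up to a power of $t$; by the propositions recorded in section 2.1, multiplying such a matrix by a vector swaps the roles of $\sharp$ and $\flat$. Concretely, write $P := R_q^{a_1}J_qR_q^{a_2}J_q\cdots R_q^{a_{2m+1}}J_q$ and $P' := R_q^{a_1}J_qR_q^{a_2}J_q\cdots R_q^{a_{2m+1}}$ (dropping the last $J_q$), so that $P = P'J_q$. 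The matrix $P'$ has an even number of $J_q$'s, so after inserting one more elementary matrix it is covered by Proposition \ref{prop_de_mgo}: indeed $[a_1,\dots,a_{2m+1}]$ with one more trivial term, or more simply $P' R_q^{0}\cdots$ — better, apply Proposition \ref{prop_de_mgo} to the even continued fraction expansion $[a_1,\dots,a_{2m+1},1]$ or to $P$ itself after rewriting $R_q^{a_{2m+1}}J_q$ suitably. I will make this bookkeeping precise in the next step.

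The cleanest route: apply Proposition \ref{prop_de_mgo} to the rational number $u/v = [a_1,\dots,a_{2m+1}]$ whose \emph{even} continued fraction expansion is $[a_1,\dots,a_{2m+1}-1,1]$ (when $a_{2m+1}\ge 2$) or $[a_1,\dots,a_{2m}+1]$ (when $a_{2m+1}=1$) — but this changes the length parity awkwardly. Instead I would multiply on the right by $J_q$: since $R_q^{a_{2m+1}}J_q = (R_q^{a_{2m+1}}J_q)$ and $J_q^2 = t\,\Id$ (an easy computation, or: $J=NS$ and $J^2 = \Id$ with the determinant-correction $t$), we have $P = P'' J_q$ where $P'' := R_q^{a_1}J_q\cdots R_q^{a_{2m}}J_q R_q^{a_{2m+1}}$. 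Hmm — the truly frictionless statement is: $P \cdot \tfrac{1}{0} = R_q^{a_1}J_q\cdots R_q^{a_{2m+1}}J_q\cdot\tfrac{1}{0}$, and by Theorem \ref{sharp_and_flat} applied to the determinant $-1$ factor structure, the first column of $P$ (up to the scalar $q^{\min(0,a_1)}t^m$) equals $(qV^\flat, qU^\flat)$ read off as $[u/v]^\flat$, while the second column records $[u_{2m}/v_{2m}]^\flat$. So the main work is:

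\textbf{Step 1.} Show $J_q^2 = t\,\Id$ and $R_q^{a}J_q R_q^{b} J_q = t\,R_q^{a}L_q^{b}$ with $L_q = \left(\begin{smallmatrix} q&0\\ q&1\end{smallmatrix}\right)$ — already recorded in the text — and use these to rewrite $P = q^{?}t^{m}\cdot(\text{product of }m\text{ blocks})\cdot R_q^{a_{2m+1}}J_q$, or symmetrically to reduce $P$ to an honest $\SL_2$-matrix times one leftover determinant-$(-1)$ factor.

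\textbf{Step 2.} Apply Proposition \ref{prop_de_mgo} (the $\sharp$-version, even length $2m$) to the truncation $[a_1,\dots,a_{2m}]$, identifying the resulting $\SL_2$-matrix entries with $U^\sharp_{2m}, V^\sharp_{2m}$ etc.; then the right-multiplication by $R_q^{a_{2m+1}}J_q$, which has determinant $-q(q^2-q+1)$ up to units, carries $\sharp$ to $\flat$ by Theorem \ref{sharp_and_flat}. Reading the two columns of the final matrix gives $[u/v]^\flat = U^\flat/V^\flat$ and $[u_{2m}/v_{2m}]^\flat = U^\flat_{2m}/V^\flat_{2m}$, and tracking the accumulated scalar yields the prefactor $q^{\min(0,a_1)}t^m$.

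\textbf{Step 3.} Reconcile the power of $t$: each of the $m$ applications of $R_q^{a_{2i-1}}J_qR_q^{a_{2i}}J_q = t R_q^{a_{2i-1}}L_q^{a_{2i}}$ contributes one factor $t$, matching $t^m$; the leftover factor $R_q^{a_{2m+1}}J_q$ contributes no extra $t$, and the boundary term $q^{\min(0,a_1)}$ comes exactly as in \cite{MGO} from normalizing $R_q^{a_1}$ when $a_1<0$.

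\textbf{Main obstacle.} The genuine difficulty is purely combinatorial/bookkeeping: matching the parity conventions and the exact scalar $q^{\min(0,a_1)}t^m$ between the odd-length left version and the even-length right version of \cite{MGO}, and checking that the ``leftover'' determinant-$(-1)$ factor $R_q^{a_{2m+1}}J_q$ interacts with Theorem \ref{sharp_and_flat} to produce $[u_{2m}/v_{2m}]^\flat$ in the second column with \emph{no} spurious extra power of $q$ or $t$. Once the scalar accounting is pinned down, the algebraic content is immediate from Theorem \ref{sharp_and_flat} and Proposition \ref{prop_de_mgo}. An alternative, fully self-contained route is a direct induction on $m$: the base case $m=1$ is a finite matrix computation, and the inductive step multiplies the length-$(2m-1)$ matrix on the right by $J_q R_q^{a_{2m}} J_q R_q^{a_{2m+1}} J_q$ and uses the $q$-deformed continued-fraction recursion $U^\flat = q[a]^\flat$-type relations — but this reproves machinery already available, so I prefer the deduction above.
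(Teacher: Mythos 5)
Your overall strategy coincides with the paper's: apply Proposition \ref{prop_de_mgo} to the even-length truncation $[a_1,\dots,a_{2m}]$ and then multiply on the right by the single leftover factor $R_q^{a_{2m+1}}J_q$. The gap sits exactly where you flag ``the main obstacle'' and then set it aside as bookkeeping: Theorem \ref{sharp_and_flat} is a statement about the projective action on $\PP^1(\ZZ(q))$, so it identifies each column of the product only up to an unknown unit of $\Lambda$, whereas the proposition asserts exact equalities of entries with the normalized numerators and denominators $U^{\flat}, V^{\flat}, U_{2m}^{\flat}, V_{2m}^{\flat}$. Comparing determinants of the two sides pins down only the \emph{product} of the two column scalars, not each one separately, so the projective argument cannot be completed without additional input; that additional input is the actual content of the proof.

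The paper closes this gap with two concrete entry-level identities. Writing the even part as $q^{\min(0,a_1)}t^{m}\left(\begin{smallmatrix} qU_{2m}^{\sharp} & U_{2m-1}^{\sharp}\\ qV_{2m}^{\sharp} & V_{2m-1}^{\sharp}\end{smallmatrix}\right)$ and multiplying by $R_q^{a_{2m+1}}J_q = \left(\begin{smallmatrix} q[a_{2m+1}]^{\flat} & 1\\ q & 1-q\end{smallmatrix}\right)$, the first column is handled by proving, by induction on $m$, the exact recursion $U^{\flat} = q[a_{2m+1}]^{\flat}U_{2m}^{\sharp} + U_{2m-1}^{\sharp}$ (and likewise for $V^{\flat}$), and the second column by invoking the identity $U_{2m}^{\flat} = qU_{2m}^{\sharp} + (1-q)U_{2m-1}^{\sharp}$ from Appendix A of \cite{BBL}. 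Neither of these appears in your plan, and your alternative reductions (via $J_q^2 = t\,\Id$ or the $L_q$ blocks) do not supply them. Without these identities, or some substitute that fixes the normalization of each column (e.g.\ comparing constant or leading coefficients of the entries), the argument establishes the displayed matrix identity only up to an undetermined unit in each column, which is strictly weaker than the proposition.
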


\begin{proof} We consider $u_{2m-1}/v_{2m-1} = [a_1,\cdots,a_{2m-1}]$, and its right quantization $[u_{2m-1}/v_{2m-1}]^{\sharp} = U_{2m-1}^{\sharp}/V_{2m-1}^{\sharp}$. According to the even case, 
$$
R_q^{a_1} J_q R_q^{a_2} J_q \cdots R_q^{a_{2m}}J_q = q^{\min(0,a_1)}t^{m}\begin{pmatrix}
	qU_{2m}^{\sharp} & U_{2m-1}^{\sharp}\\
	qV_{2m}^{\sharp} & V_{2m-1}^{\sharp}\\
	\end{pmatrix}.
$$

\noindent Therefore 
$$
R_q^{a_1} J_q R_q^{a_2} J_q \cdots R_q^{a_{2m}}J_qR_q^{a_{2m+1}}J_q = q^{\min(0,a_1)}t^{m}\begin{pmatrix}
								q(q[a_{2m+1}]^{\flat}U_{2m}^{\sharp} + U_{2m-1}^{\sharp}) & qU_{2m}^{\sharp} + (1-q)U_{2m-1}^{\sharp}\\
								q(q[a_{2m+1}]^{\flat}V_{2m}^{\sharp} + V_{2m-1}^{\sharp}) & qV_{2m}^{\sharp} + (1-q)V_{2m-1}^{\sharp}\\
								\end{pmatrix}.
$$

\noindent An induction on $m$ shows that 
$$
U^{\flat} = q[a_{2m+1}]^{\flat}U_{2m}^{\sharp} + U_{2m-1}^{\sharp} \text{ and } V^{\flat} = q[a_{2m+1}]^{\flat}V_{2m}^{\sharp} + V_{2m-1}^{\sharp}.
$$

\noindent Finally, as indicated in Appendix A of \cite{BBL}, $qU_{2m}^{\sharp} + (1-q)U_{2m-1}^{\sharp} = U_{2m}^{\flat}$ and same for denominators.
\end{proof}

\begin{ex}
Let us detail the case of $\frac{u}{v} = \frac{7}{5} = [1,2,1,1] = [1,2,2]$. With the even continued fraction, we get 
$$
R_qJ_qR_q^2J_qR_qJ_qR_qJ_q = \begin{pmatrix}
q^5 + 2q^4 + 2q^3 + q^2 + q   &  q^3 + q^2 + q + 1\\
q^4 + 2q^3 + q^2 + q    & q^2 + q + 1\\
\end{pmatrix},
$$
\noindent so that $\left[\frac{7}{5}\right]^{\sharp} = \frac{1+q+2q^2+2q^3+q^4}{1+q+2q^2+q3}$. For the odd continued fraction, 
\begin{align*}
R_qJ_qR_q^2J_qR_q^2J_q &= \begin{pmatrix}
	q^8 + 2q^6 + 2q^4 + q^3 + q  &    q^5 + 2q^2 - q + 1\\
 q^7 + q^5 + q^4 + q^3 + q   &  q^4 - q^3 + 2q^2 - q + 1\\
\end{pmatrix} \\
&= t\begin{pmatrix}
q^6 + q^5 + 2q^4 + q^3 + q^2 + q &     q^3 + q^2 + 1\\
 q^5 + q^4 + q^3 + q^2 + q  &          q^2 + 1\\
\end{pmatrix}.\\
\end{align*}

\noindent thus $\left[\frac{7}{5}\right]^{\flat} = \frac{1+q+q^2+2q^3+q^4+q^5}{1+q+q^2+q^3+q^4}$.
\end{ex}

\subsection{Palindromicity and positivity of $q$-traces in determinant $-1$}

One of the main properties of $q$-deformed matrices of $\PSL_2(\ZZ)$ is that their traces are palindromic polynomials \cite{Leclere_modular}. This property still holds for the $q$-deformed matrices of determinant $-1$. 

\begin{prop}
\label{palindrome}
Let $M\in \PGL_2(\ZZ)$, with $\det(M) = -1$. Then $\Tr(M_q)$ is a palindromic polynomial with positive integer coefficients, up to a signed power of $q$. \\
\end{prop}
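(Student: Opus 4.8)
The strategy is to reduce the determinant $-1$ case to the determinant $1$ case, where palindromicity is already known from \cite{Leclere_modular}. Write $M = M^+N$ with $\det(M^+) = 1$, so that $M_q = M^+_q N_q$ in $\PGL_2(\Lambda)$. Thus $\Tr(M_q) = \Tr(M^+_q N_q)$, and I want to understand this trace in terms of the entries of $M^+_q$, which form a $q$-deformed $\SL_2$-matrix. Writing $M^+_q = \begin{pmatrix} a & b \\ c & d\end{pmatrix}$ and using $N_q = \begin{pmatrix} -1 & 1-q^{-1} \\ q-1 & 1 \end{pmatrix}$, a direct expansion gives
$$
\Tr(M^+_q N_q) = -a + (q-1)b + (1-q^{-1})c + d = \big(d - a\big) + (q-1)\big(b + q^{-1}c\big).
$$
So the claim becomes a statement about the quantities $d-a$ and $b+q^{-1}c$ built from the four entries of an element of $\PSL_{2,q}(\ZZ)$.

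The key input I would invoke is the palindromicity/symmetry structure of $q$-deformed $\SL_2(\ZZ)$ matrices established in \cite{Leclere_modular}: not only is $a+d$ palindromic up to a power of $q$, but there are known symmetry relations among the individual entries $a,b,c,d$ under the substitution $q \mapsto q^{-1}$ (coming, e.g., from the compatibility of $q$-deformation with matrix transposition and inversion, or from the fact that $[x]^{\sharp}$ and $[x]^{\flat}$ are swapped by $q\mapsto q^{-1}$ together with the antipodal/reciprocal maps). Concretely I expect a relation of the shape: for $M^+_q$ the $q$-deformation of $M^+ \in \SL_2(\ZZ)$, the matrix $q^{\bullet}\cdot\overline{M^+_q}$ (where the bar denotes $q\mapsto q^{-1}$) equals the $q$-deformation of $(M^+)^{-1}$ or of its transpose-inverse, up to conjugation by a fixed diagonal matrix. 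Feeding this into the expression $\Tr(M_q) = (d-a) + (q-1)(b+q^{-1}c)$ and checking that under $q \mapsto q^{-1}$ the whole expression returns to itself up to a monomial factor is the computation that proves palindromicity.

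For positivity of the coefficients, I would argue combinatorially rather than via the entry symmetries. Using the even-continued-fraction matrix decomposition, any $M \in \PGL_2(\ZZ)$ of determinant $-1$ can be written (up to sign and powers of $q$) as a word $R_q^{a_1}J_q R_q^{a_2}J_q \cdots R_q^{a_k}J_q$ with an odd number of $J_q$'s, by Proposition \ref{shape_matrix_positive} and the analogous even statement, or more uniformly as a positive word in $R_q$ and $L_q = \begin{pmatrix} q & 0 \\ q & 1\end{pmatrix}$ times one copy of $J_q$; all of $R_q, L_q, J_q$ (the latter being $\begin{pmatrix} q-1 & 1 \\ q & 1-q\end{pmatrix}$, which after multiplying by $-1$ and a monomial is not entrywise positive) — so I need to be slightly careful. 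The cleaner route: since the determinant $-1$ trace is, by the reduction above, a $\ZZ[q,q^{-1}]$-linear combination of entries of a determinant $1$ matrix, and those entries are known (from \cite{MGO, BBL, Leclere_modular}) to be polynomials with nonnegative coefficients (generating functions of lattice paths / subsets of fence posets), I would show the specific combination $(d-a)+(q-1)(b+q^{-1}c)$ has nonnegative coefficients by a telescoping/cancellation argument: the negative contributions $-a$ and $-q^{-1}c$ are absorbed, term by term, into $d$ and $qb$ respectively, using the recursive relations among $a,b,c,d$ coming from appending one generator to the continued fraction word. This mirrors the cancellations visible in the worked examples (e.g. the $\left[\frac72\right]^{\flat}$ and $\left[\frac{11}{3}\right]^{\sharp}$ computations).

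The main obstacle I anticipate is pinning down the precise entry-level symmetry relation for $\PSL_{2,q}(\ZZ)$-matrices under $q \mapsto q^{-1}$ with the correct monomial normalization — getting the power of $q$ and the diagonal conjugation exactly right — since palindromicity of $\Tr(M_q)$ is sensitive to these. Once that normalization is fixed, the trace identity $\Tr(M_q) = (d-a)+(q-1)(b+q^{-1}c)$ makes both palindromicity and (with the combinatorial positivity of the entries) positivity fall out; positivity is the more delicate of the two because it requires the cancellation argument rather than a formal symmetry, so I would organize the proof to handle palindromicity first and then positivity via an induction on the length of the continued fraction word.
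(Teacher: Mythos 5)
Your starting point coincides with the paper's: reduce to determinant $1$ and observe that the trace becomes the combination $-A+(q-1)B+(1-q^{-1})C+D$ of the entries of a quantized $\SL_2$ word. The palindromicity strategy (transpose, substitute $q\mapsto q^{-1}$, compare) is also the paper's. However, the relation you ``expect'' to invoke is not the one that is actually needed, and you leave it unresolved. Computing ${}^tM_{q^{-1}}$ factor by factor on the elementary blocks $R_q^{c_i}S_q$ of the negative continued fraction shows that it equals, up to a monomial in $q$ and conjugation by $N$, the matrix of the \emph{reversed} word $M_q(c_k,\ldots,c_1)$. So palindromicity of $\Tr(N_qM_q)$ reduces not to a formal symmetry of one matrix under $q\mapsto q^{-1}$, but to the invariance of the combination $-A+(1-q^{-1})C+(q-1)B+D$ under reversal $(c_1,\ldots,c_k)\mapsto(c_k,\ldots,c_1)$. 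This requires all three entry identities $A+D=\tilde A+\tilde D$, $C-qB=\tilde C-q\tilde B$, $A+B-C=\tilde A+\tilde B-\tilde C$ from Lemma 3.8 of \cite{Leclere_modular} (the first alone only handles determinant $1$). You explicitly name this as your ``main obstacle'' without closing it, so the palindromicity half of your argument is incomplete.

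The positivity half is a larger gap: ``telescoping/cancellation'' is announced but not performed, and in the uniform form you state it cannot work, because the sign of $\Tr(M_q)$ is not always positive --- see the worked example in the paper where $\Tr(M_q)=-(1+q+2q^2+q^3+2q^4+q^5+q^6)$ --- so any absorption of $-a$ into $d$ and of $-q^{-1}c$ into $qb$ must first decide the global sign, which depends on the matrix. The paper's actual mechanism is different: Lemma \ref{shape_matrix} identifies the entries of $N_qR_q^{c_1}S_q\cdots R_q^{c_k}S_q$ with numerators and denominators of \emph{left} quantizations of negative rationals, whose coefficient signs are known, and positivity then follows from a case analysis on the negative continued fraction (the cases $c_1\geq 1$, $c_1\leq -1$, $c_1=0$, and trailing coefficients equal to $1$), using reduction identities such as $N_qM_q(c)=-q^cM_q(-c)N_q$ and $N_qS_q=-S_qN_q$ together with an induction on the length $k$. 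Your proposal, framed around positive continued fractions and $J_q$ words, does not anticipate this case analysis and would need to be substantially reworked to establish positivity.
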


Our proof of this proposition relies on negative continued fractions, also called Hirzebruch-Jung continued fractions. For a rational number $x\in \QQ$, there are uniquely determined integers $c_1 \in \ZZ$, $c_2,\cdots,c_k \geq 2$, such that 
$$
x = c_1 - \frac{1}{c_2 - \frac{1}{\ddots - \frac{1}{c_k}}}.
$$
This negative continued fraction expansion of $x$ is denoted by $x = [\![c_1,c_2,\cdots,c_k]\!]$.

We will need the following lemma which follows directly from Proposition \ref{prop_de_mgo}, and Theorem \ref{sharp_and_flat}.

\begin{lemme}
\label{shape_matrix}
Let $u/v $ be a rational number and let $[\![c_1,\cdots,c_k]\!]$ its negative continued fraction expansion, with $c_1\in \ZZ $ and $c_i\geq 2$ for all $i\geq 2$. Consider
$$
N R^{c_1} S R^{c_2} S \cdots R^{c_k} S = \begin{pmatrix}
										-u & u'\\
										v & -v'\\
										\end{pmatrix},
$$
\noindent where $u'/v' = [\![c_1,\cdots,c_{k-1}]\!] $. Then the $q$-deformation of this matrix is
$$N_qR_q^{c_1} S R_q^{c_2} S \cdots R_q^{c_{k}}S = q^{\min(0,c_1)-1} \begin{pmatrix}
U^{\flat} & -q^{c_k-1}(U')^{\flat}\\
V^{\flat} & -q^{c_k-1}(V')^{\flat}\\
\end{pmatrix},
$$
\noindent where $\left[\frac{-u}{v}\right]^{\flat} = \frac{U^{\flat}}{V^{\flat}}$ and $\left[\frac{-u'}{v'}\right]^{\flat} = \frac{(U')^{\flat}}{(V')^{\flat}}$. 
\end{lemme}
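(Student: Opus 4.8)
\emph{Strategy.} This is the Hirzebruch--Jung analogue of Proposition \ref{shape_matrix_positive}, and the plan is to reduce it to the positive--continued--fraction statements (Propositions \ref{prop_de_mgo} and \ref{shape_matrix_positive}), using Theorem \ref{sharp_and_flat} to pass through the determinant $-1$ matrix $N$. The conceptual part is quick. Write $M=NP$ with $P:=R^{c_1}SR^{c_2}S\cdots R^{c_k}S\in\PSL_2(\ZZ)$; reading off the integral matrix $P=\begin{pmatrix}u&-u'\\v&-v'\end{pmatrix}$ gives $P\cdot\infty=u/v=[\![c_1,\dots,c_k]\!]$ and $P\cdot 0=u'/v'=[\![c_1,\dots,c_{k-1}]\!]$. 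Since $P\in\PSL_2(\ZZ)$, the two columns of $P_q$ represent, projectively, $P_q\cdot[\infty]^{\sharp}=[u/v]^{\sharp}$ and $P_q\cdot[0]^{\sharp}=[u'/v']^{\sharp}$; applying $N_q$ and Theorem \ref{sharp_and_flat} (namely $N_q\cdot[x]^{\sharp}=[-x]^{\flat}$, since $\det N=-1$) turns these into $[-u/v]^{\flat}=U^{\flat}/V^{\flat}$ and $[-u'/v']^{\flat}=(U')^{\flat}/(V')^{\flat}$. Hence the columns of $N_qR_q^{c_1}S\cdots R_q^{c_k}S$ are proportional over $\overline{\Lambda}$ to $(U^{\flat},V^{\flat})$ and $((U')^{\flat},(V')^{\flat})$ --- the claimed shape, modulo the two scalar factors.

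\emph{Pinning down the scalars.} To promote this to the exact identity I would use the precise normalization supplied by Proposition \ref{prop_de_mgo} (or \ref{shape_matrix_positive}). Converting $[\![c_1,\dots,c_k]\!]$ to the regular continued fraction expansion via the matrix identity $R^cS=R^{c-1}(JRJ)R^{-1}$, chained over $i$ and with the adjacent factors merged ($R^{-1}R^{c_{i+1}-1}=R^{c_{i+1}-2}$), expresses $P$ as a product of elementary matrices $R^{a_j}J$ with $a_1=c_1-1$ and the last exponent controlled by $c_k$; the $q$-deformation of this rewriting differs from the naive one only by an explicit monomial in $q$ and a power of $t$. Plugging the resulting expansion into Proposition \ref{prop_de_mgo} gives $P_q$ on the nose, with its $q^{\min(0,a_1)}t^{m}$ prefactor; left--multiplying by $N_q$ (whose first column is $(-1,q-1)$ and whose determinant is $-qt$) then yields the overall normalization $q^{\min(0,c_1)-1}$, the last block accounting for the relative factor $-q^{c_k-1}$ on the second column. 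An equivalent, more hands-on route is induction on $k$: the base case is the direct computation of $N_qR_q^{c_1}S$, and the inductive step multiplies the matrix for $[\![c_1,\dots,c_{k-1}]\!]$ on the right by the $q$-deformed elementary matrix $R_q^{c_k}S$, the content being the $q$-deformed Hirzebruch--Jung recursion $U^{\flat}=[c_k]^{\sharp}U^{\flat}_{k-1}-q^{c_{k-1}-1}U^{\flat}_{k-2}$ (and likewise for denominators), which is again a consequence of Proposition \ref{prop_de_mgo} and Theorem \ref{sharp_and_flat}.

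\emph{Main obstacle.} The identification of the columns is immediate, so the real work is the bookkeeping of exponents: checking that the $\mathrm{sign}(a_1)$-dependent prefactor $q^{\min(0,a_1)}t^{m}$ of Proposition \ref{prop_de_mgo}, the monomials introduced by the $q$-deformed rewriting of $R^cS$, and the $q^{-1}$-type contribution of $N_q$ collapse exactly to $q^{\min(0,c_1)-1}$; that the last elementary block contributes precisely $-q^{c_k-1}$ on the second column; and that the resulting normalization of the Laurent polynomials $U^{\flat},V^{\flat},(U')^{\flat},(V')^{\flat}$ coincides with the one fixed in Propositions \ref{prop_de_mgo} and \ref{shape_matrix_positive}. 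A convenient final check is the determinant, $\det M_q=\det N_q\cdot\det P_q$, which is $-t$ times an explicit power of $q$, together with the specialization $q=1$ (recovering $\begin{pmatrix}-u&u'\\v&-v'\end{pmatrix}$); these constraints pin down all the scalar factors uniquely.
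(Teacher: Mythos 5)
Your argument is correct and takes essentially the same approach as the paper, which offers no separate proof of this lemma beyond the assertion that it ``follows directly from Proposition \ref{prop_de_mgo} and Theorem \ref{sharp_and_flat}'' --- precisely the reduction you carry out (columns identified projectively via Theorem \ref{sharp_and_flat} applied to $N_qP_q\cdot[\infty]^{\sharp}$ and $N_qP_q\cdot[0]^{\sharp}$, scalars normalized by converting to the positive expansion and invoking Proposition \ref{prop_de_mgo}). The only caveat is that your closing remark slightly overstates what the determinant together with the $q=1$ specialization can do on their own (they constrain essentially the product of the two column scalars, not each one), but since you obtain the scalars independently from the explicit bookkeeping or the induction, this does not affect the proof.
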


\begin{proof}[Proof of Proposition \ref{palindrome}]
$\bullet$ Let us first show that $\Tr(M_q)$ is a palindromic polynomial in $q$. \\
\noindent We can write $M = NM(c_1,c_2,\cdots,c_k)$, where
$$
M(c_1,c_2,\cdots,c_k) = R^{c_1}SR^{c_2}S \cdots R^{c_k}S.
$$
\noindent Let us compute $^tM_{q^{-1}}$.
\begin{align*}
^tM_{q^{-1}} &= \begin{pmatrix}
				[c_k]_{q^{-1}}^{\sharp} & 1 \\
				-q^{1-c_k} & 0 \\
				\end{pmatrix} \cdots 
				\begin{pmatrix}
				[c_2]_{q^{-1}}^{\sharp} & 1 \\
				-q^{1-c_2} & 0 \\
				\end{pmatrix} \begin{pmatrix}
				[c_1]_{q^{-1}}^{\sharp} & 1 \\
				-q^{1-c_1} & 0 \\
				\end{pmatrix} ~ ^tN_{q^{-1}} ~\\
	&= q^{-\Sigma (c_i - 1)}\begin{pmatrix}
				[c_k]_{q}^{\sharp} & q^{c_k-1} \\
				-1 & 0 \\
				\end{pmatrix} \cdots 
				\begin{pmatrix}
				[c_2]_{q}^{\sharp} & q^{c_2-1} \\
				-1 & 0 \\
				\end{pmatrix} \begin{pmatrix}
				[c_1]_{q}^{\sharp} & q^{c_1-1} \\
				-1 & 0 \\
				\end{pmatrix} ~^tN_{q^{-1}}\\
	&= q^{-\Sigma (c_i - 1)}NM_q(c_k,\cdots,c_2,c_1)N^tN_{q^{-1}},\\
\end{align*}

\noindent because the conjugation by $N$ turn the signs of the antidiagonale to the opposite. Therefore $N^tN_{q^{-1}} = N_qN$, so taking the trace we get :
$$
\Tr(M_{q^{-1}}) = q^{-\Sigma (c_i - 1)}\Tr\big(M_q(c_k,\cdots,c_2,c_1)N_q\big).
$$
\noindent It is thus enough to show that $\Tr(M_q(c_k,\cdots,c_2,c_1)N_q) = \Tr(N_qM_q(c_1,c_2,\cdots,c_k))$. Let us set 
$$
M_q(c_1,c_2,\cdots,c_k) = \begin{pmatrix}
							A & B \\
							C & D\\
\end{pmatrix} \text{ and } 
M_q(c_k,\cdots,c_2,c_1) = \begin{pmatrix}
					\tilde{A} & \tilde{B}\\
					\tilde{C} & \tilde{D}
					\end{pmatrix}.
$$

\noindent With these notations, $\Tr(N_qM_q(c_1,c_2,\cdots,c_k)) = -A +(1-q^{-1})C + (q-1)B + D$, and same for $\Tr(N_qM_q(c_k,\cdots,c_2,c_1))$ with $\tilde{A},\tilde{B},\tilde{C},\tilde{D}$. In the proof of Lemma 3.8 of \cite{Leclere_modular}, the following relations are settled, using induction on the length of the continued fraction $[\![c_1,\cdots,c_k]\!]$ and on the integer $c_k$ :
$$
\begin{cases}
A + D = \tilde{A} + \tilde{D}	~~~~~~(1)\\
C - qB = \tilde{C} - q\tilde{B}~~~~ (2)\\
A + B - C = \tilde{A} + \tilde{B} - \tilde{C} ~~ (3)
\end{cases}.
$$
\noindent Taking $(2) + (3)$, we get \\
\noindent $A - (q-1)B = \tilde{A} - (q-1)\tilde{B}$, and taking $(1) - q^{-1}(2) - (3)$, we get $D + (1-q^{-1})C = \tilde{D} + (1-q^{-1})\tilde{C}$, and this shows exactly that 
$$ 
\Tr(N_qM_q(c_1,c_2,\cdots,c_k)) = \Tr(N_qM_q(c_k,\cdots,c_2,c_1)).
$$
\noindent Therefore $\Tr(M_{q^{-1}}) = q^{- \Sigma (c_i-1)}\Tr(M_q)$, meaning that $\Tr(M_q)$ is a palindromic polynomial.\\
~\\
\noindent $\bullet$ Now we are showing that the coefficients of $\Tr(M_q)$ are of the same sign. We keep the same notations as above, $M = NM(c_1,c_2,\cdots,c_k)$. It was explained in \cite{MGO_Farey} that we can choose the $c_i$'s to be such that $c_2,\cdots,c_{k-2} \geq 2$, and either $c_{k-1} \geq 2$ and $c_k \geq 2$, either $c_{k-1} \geq 2$ and $c_k = 1$, either $c_{k-1} = c_k = 1$. \\
~\\
\noindent First, let us suppose that $c_i \geq 2$ for all $2 \leq i \leq k$. If $c_1 \geq 1$, the rational number given by $[\![c_1,\cdots,c_k]\!]$ is nonnegative, and the Lemma \ref{shape_matrix} ensures that $\Tr(M_q)$ has negative coefficients. \\
\noindent If $c_1 \leq -1$, then we can use the relation $N_qM_q(c) = -q^cM_q(-c)N_q$ to get 
$$
\Tr(M_q) = \Tr(N_qM_q(c_1,c_2,\cdots,c_k)) = -q^{c_1}\Tr(N_qM_q(c_2,\cdots,c_k,-c_1)).
$$
\noindent According to the previous case, here $\Tr(M_q)$ has positive coefficients.\\
\noindent Now let us suppose that $c_1 = 0$. Then using the relations $N_qS_q = -S_qN_q$ and $S_q^2 = -q^{-1}\Id$,
$$
\Tr(N_qM_q(0,c_2,\cdots,c_k)) = -\Tr(S_qN_qM_q(c_2,\cdots,c_k)) = q^{c_2-1}\Tr(N_qM_q(c_3,\cdots,c_k-c_2)),
$$
\noindent thus if $c_k-c_2 \neq 0$ the previous cases apply, and if $c_k - c_2 = 0$, we get the result by induction on the length $k$ of the continued fraction. \\
~\\
\noindent On a second hand, let us suppose that $c_i \geq 2$ for all $2 \leq i \leq k$ and $c_k = 1$. \\
\noindent Using the relations $R_qS_qR_q = -q^2S_qR_q^{-1}S_q$ and $R_q^{-1}N_q = q^{-1}N_qR_q$, we get that 
$$
\Tr(N_qM_q(c_1,\cdots,c_{k-1},1)) = q\Tr(N_qM_q(c_1\cdots,c_{k-1}-1)R_q^{-1}) = \Tr(N_qM_q(c_1+1,c_2,\cdots,c_{k-1}-1)).
$$

\noindent Therefore if $c_{k-1} \geq 3$, the previous cases give the result. Else, $c_{k-1} -1 = 1$ and we can conclude by induction on $k$. \\
~\\
\noindent Finally, the case where $c_i  \geq 2$ for all $2 \leq i \leq k-2$ and $c_{k-1} = c_k =1$ is very similar to the previous case.
\end{proof}

\begin{ex}
Let us consider the matrix $M = \begin{pmatrix}
	-7 & 3 \\
	5 & -2
\end{pmatrix}$, which is decomposed on $R,S,N$ as 
$$
M = NR^2SR^2SR^3S.
$$ 

\noindent After quantization, we get 
$$
\Tr(M_q) = \Tr \begin{pmatrix}
	-q^5 - q^4 - q^3 - 2q^2 - q - 1 & q^5 + q^3 + q^2\\
	q^6 + q^5 + q^4 + q^3 + q^2 & -q^6 - q^4\\
\end{pmatrix}    = - (1 + q + 2q^2 + q^3 + 2q^4 + q^5 + q^6).
$$
\noindent This is a palindromic polynomial, but notice that it is not unimodular. 
\end{ex}

\section{Duplication of $\PGL_2(\ZZ)$}

The aim is to define an action of $\PGL_2(\ZZ)\times \ZZ_2$ on $\PP^1(\ZZ(q))$, containing the previous $q$-deformed action of $\PGL_2(\ZZ)$. 

\subsection{Twisted matrices}

Let us consider the operator 
$$
\begin{array}{c c c c}
\tau : & \PP^1(\ZZ(q)) & \longrightarrow & \PP^1(\ZZ(q))\\
		&  f & \longrightarrow & f(q^{-1})\\
\end{array}.
$$

\begin{prop}
Up to powers of $t = q^2 - q +1$, there is only one matrix $I_q$ in $\PGL_2(\Lambda)$ such that the operator $I_q \cdot \tau$ commutes with the quantized action of $\PGL_2(\ZZ)$ on $\PP^1(\ZZ(q))$.
\noindent This matrix is 
$$
I_q = \begin{pmatrix}
								1 & q-1 \\
								1-q & q\\
								\end{pmatrix},
$$
and it satisfies $(I_q\cdot \tau)^2 = q^{-1}t \Id$. 
\end{prop}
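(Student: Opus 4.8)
The plan is to reduce everything to a handful of $2\times 2$ matrix identities by exploiting the single observation that $\tau\cdot M_q = M_{q^{-1}}\cdot\tau$ for every matrix $M_q$, where $M_{q^{-1}}$ is obtained from $M_q$ by substituting $q\mapsto q^{-1}$ in the entries (indeed, applying this substitution to $M_q\cdot f(q)$ gives $M_{q^{-1}}\cdot f(q^{-1})$). Since $\tau^2=\Id$, this yields $(I_q\cdot\tau)\cdot M_q=(I_qM_{q^{-1}})\cdot\tau$ and $M_q\cdot(I_q\cdot\tau)=(M_qI_q)\cdot\tau$, so the operator $I_q\cdot\tau$ commutes with $M_q$ exactly when $I_qM_{q^{-1}}$ and $M_qI_q$ are proportional in $\PGL_2(\overline{\Lambda})$. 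As $R_q,S_q,N_q$ generate $\PGL_{2,q}(\ZZ)$ and $M\mapsto M_q$ is a group homomorphism, the condition ``$I_q\cdot\tau$ commutes with the quantized action'' is equivalent to the proportionality $I_qM_{q^{-1}}\propto M_qI_q$ for $M\in\{R,S,N\}$ only.

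For the existence part I would simply carry out those three multiplications: one checks that $I_qR_{q^{-1}}=q^{-1}R_qI_q$, that $I_qS_{q^{-1}}=q\,S_qI_q$, and that $I_qN_{q^{-1}}=N_qI_q$ (both sides equal to the diagonal matrix with entries $-q^{-1}t$ and $t$), which establishes the commutation with the whole quantized action. The same mechanism gives the stated relation: $(I_q\cdot\tau)^2=(I_qI_{q^{-1}})\cdot\tau^2=I_qI_{q^{-1}}$, and a direct computation yields $I_qI_{q^{-1}}=q^{-1}t\,\Id$, i.e. $(I_q\cdot\tau)^2=q^{-1}t\,\Id$ at the matricial level.

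For the uniqueness part, suppose $I'_q$ is another matrix of $\PGL_2(\Lambda)$ such that $I'_q\cdot\tau$ commutes with the action, and set $P:=I'_qI_q^{-1}\in\PGL_2(\overline{\Lambda})$. Comparing the two proportionalities $I_qM_{q^{-1}}\propto M_qI_q$ and $I'_qM_{q^{-1}}\propto M_qI'_q$ shows that $PM_qP^{-1}$ is proportional to $M_q$ for every $M\in\PGL_2(\ZZ)$. Taking $M=R$: since $\Tr(R_q)=q+1\neq 0$ in $\overline{\Lambda}$, comparing traces forces the proportionality factor to be $1$, so $P$ centralizes $R_q$; as $R_q$ has the two distinct eigenvalues $q$ and $1$, $P$ then fixes both fixed points $\tfrac10$ and $\tfrac1{1-q}$ of the transformation $R_q$. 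Taking $M=SRS^{-1}$, the matrix $S_qR_qS_q^{-1}=\left(\begin{smallmatrix}1&0\\-q&q\end{smallmatrix}\right)$ is again regular semisimple with nonzero trace, so the same argument shows $P$ fixes its fixed points $0$ and $1-q^{-1}$ as well. The four points $\tfrac10,\tfrac1{1-q},0,1-q^{-1}$ are pairwise distinct in $\PP^1(\overline{\Lambda})$ — any coincidence would force $t=0$ — so $P$ fixes at least three points of $\PP^1(\overline{\Lambda})$ and is therefore trivial there; hence $I'_q$ and $I_q$ are proportional, which is the asserted uniqueness up to powers of $t$.

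The matrix computations involved in existence and in the identity $I_qI_{q^{-1}}=q^{-1}t\,\Id$ are completely routine. The step I expect to require real care is the uniqueness argument: one must keep accurate track of the projective scalars, and one must choose two elements of $\PGL_{2,q}(\ZZ)$ — here $R_q$ and $S_qR_qS_q^{-1}$ — whose combined four fixed points are rational in $q$ and generically distinct, so that the classical fact that a M\"obius transformation with three fixed points is the identity can be invoked over the field $\overline{\Lambda}$.
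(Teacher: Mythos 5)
Your proof is correct, and the existence half runs on exactly the same engine as the paper's: the identity $\tau\cdot M_q=M_{q^{-1}}\cdot\tau$, which reduces commutation to the matrix proportionality $I_qM_{q^{-1}}\propto M_qI_q$ checked on the generators $R,S,N$ (all three of your displayed identities, and $I_qI_{q^{-1}}=q^{-1}t\,\Id$, do check out). Where you genuinely diverge is uniqueness. The paper takes an unknown matrix $\widetilde{I_q}=\left(\begin{smallmatrix}a&b\\c&d\end{smallmatrix}\right)$, writes out the proportionalities for $R_q$ and then $S_q$ as linear equations in the entries, and solves: this yields $c=(1-q)a$, $d=qa$, $b=(q-1)a$ in one pass, so existence and uniqueness come out together and the matrix is \emph{derived} rather than verified. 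You instead quotient by the known solution, observe that $P=I'_qI_q^{-1}$ projectively centralizes every $M_q$, pin the scalar to $1$ by traces, and invoke the three-fixed-points lemma for $R_q$ and $S_qR_qS_q^{-1}$ over $\overline{\Lambda}$. Your route costs more (you must know $I_q$ in advance, and you must verify that the four fixed points are generically distinct), but it buys a conceptual explanation: uniqueness holds because the centralizer of the quantized group in $\PGL_2(\overline{\Lambda})$ is trivial, which is a statement independent of the particular $I_q$. Two small points to tighten: the coincidence $0=1-q^{-1}$ forces $q=1$ rather than $t=0$ (harmless, since both are impossible in $\ZZ(q)$, but state it accurately); and your conclusion is proportionality over the field $\overline{\Lambda}$, so to land on the statement's ``up to powers of $t$'' you should add that the scalar, being a ratio of entries of two matrices of $\GL_2(\Lambda)$ in both directions, is a unit of $\Lambda=\ZZ[q,q^{-1}]_t$, hence of the form $\pm q^it^j$ --- the same (slightly elided) step the paper performs when it says $a$ must be invertible in $\Lambda$.
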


\begin{proof}
Let $\widetilde{I_q} = \begin{pmatrix}
			a & b \\
			c & d\\
\end{pmatrix}$, be such that $\widetilde{I_q} \cdot \tau$ commutes with every matrix $A_q$ for $ A \in \PGL_{2}(\ZZ)$. In particular,  
$$\widetilde{I_q}\cdot \tau \cdot R_q = R_q \cdot \widetilde{I_q} \cdot \tau$$
$$\begin{pmatrix}
aq^{-1} & a+b \\
cq^{-1} & c+d\\
\end{pmatrix} \cdot \tau = \begin{pmatrix}
							aq+c & qb+d\\
							c & d\\
							\end{pmatrix} \cdot \tau,$$
\noindent so $\begin{cases}
			a = aq+c \\
			qa + qb = qb +d\\
			qc+qd = d\\
			\end{cases}$, and then $c = (1-q)a$, and $d = qa$ :
$$ \widetilde{I_q} = \begin{pmatrix}
			a & b \\
			(1-q)a & qa\\
			\end{pmatrix}.$$
\noindent Same with $S_q$ :
$$\widetilde{I_q}\cdot \tau \cdot S_q = S_q \cdot \widetilde{I_q} \cdot \tau$$
$$\begin{pmatrix}
b & -qa \\
qa & (q^2-q)a\\
\end{pmatrix} \cdot \tau = \begin{pmatrix}
							(1-q^{-1})a & -a\\
							a & b\\
							\end{pmatrix}\cdot \tau,$$
\noindent so $b = (q-1)a$. Finally, we can check that $\widetilde{I_q}\cdot \tau \cdot N_q = N_q \cdot \widetilde{I_q}\cdot \tau $. \\
~\\
\noindent To have $\det(\widetilde{I_q})$ invertible in $\Lambda$, the coefficient $a$ must be invertible so $a$ is a power of $tq$.
\end{proof}

\begin{defi}
We define $\overline{I}_q := I_q\cdot \tau$ as the second quantization of the identity of $\PGL_2(\ZZ)$ (the first quantization of identity being just identity of $\PGL_2(\Lambda)$).\\
\noindent Then, for every matrix $M\in \PGL_2(\ZZ)$, we define the operator
$$
\overline{M}_q := M_q \cdot \overline{I}_q \text{ acting on } \PP^1(\ZZ(q)).
$$ 
\end{defi}

\begin{rem}
As $\overline{I}_q^2 = \Id$ (up to $q$ and $t$), for all matrices $A, B$ in $\PGL_2(\ZZ)$, we have 
$$
\overline{A}_q\overline{B}_q = A_qB_q \text{ as operators of } \PP^1(\ZZ(q)).
$$
\end{rem}

\begin{ex}
For the generators $R,S$ and $N$, we get 
$$
\overline{R}_q = \begin{pmatrix}
					1 & q^2 \\
					1-q & q \\
					\end{pmatrix} \cdot \tau, ~ \overline{S}_q = \begin{pmatrix}
					q-1 & -q \\
					q & q^2-q \\
					\end{pmatrix} \cdot \tau \text{ and } \overline{N}_q = \begin{pmatrix}
					-t & 0 \\
					0 & qt \\
					\end{pmatrix} \cdot \tau = \begin{pmatrix}
					-1 & 0\\
					0 & q\\
					\end{pmatrix}\cdot \tau.
$$
\noindent Recall the other usual generator $L = RSR$ ($L$ together with $R$ generate $\PSL_2(\ZZ)$). In $\PSL_{2}(\Lambda)$, it becomes 
$$
L_q = \begin{pmatrix}
1 & 0\\
1 & q^{-1}\\
\end{pmatrix} .
$$
\noindent We can check that 
$$
\overline{R}_q\overline{S}_q\overline{R}_q  = q^{-1}t\begin{pmatrix}
					q & q^2-q \\
					1 & q^2 \\
					\end{pmatrix} \cdot \tau = \overline{L}_q.
$$ 

\noindent For the matrix $J = \begin{pmatrix}
								0 & 1 \\
								1 & 0\\
								\end{pmatrix}$, the second quantification is trivial :
$$
\overline{J_q} = \begin{pmatrix}
				0 & 1\\
				1 & 0\\
				\end{pmatrix} \cdot \tau.
$$
\end{ex}

\subsection{Action on rational numbers}

\begin{defi}
	We define the quantized duplication of $\PGL_2(\ZZ)$ to be the action 
	$$
	\begin{array}{c c c}
	\big(\PGL_{2,q}(\ZZ)\times \ZZ_2\big)\times \PP^1(\ZZ(q)) & \longrightarrow & \PP^1(\ZZ(q))\\
	\big((M_q,\epsilon), f\big) & \longmapsto & \begin{cases}
								M_q \cdot f  \text{ if } \epsilon = 1\\
								\overline{M}_q \cdot f \text{ if } \epsilon = -1\\
								\end{cases}.\\
	\end{array}
	$$
	\end{defi}

\begin{prop}
Let $M \in \PGL_2(\ZZ)$. Let $x\in \PP^1(\QQ)$. Then in $\Lambda$, \\
\noindent if $\det(M) = 1$,
$$
\overline{M}_q \cdot [x]^{\sharp} = [M\cdot x]^{\flat} \text{ and } \overline{M}_q \cdot [x]^{\flat} = [M\cdot x]^{\sharp},
$$

\noindent and if $\det(M) = -1$, 
$$
\overline{M}_q \cdot [x]^{\sharp} = [M\cdot x]^{\sharp} \text{ and } \overline{M}_q \cdot [x]^{\flat} = [M\cdot x]^{\flat}.
$$
\end{prop}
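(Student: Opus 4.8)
The plan is to reduce everything to the case $x = \infty$ together with the already-established Proposition on the action of matrices of determinant $\pm 1$ on left and right $q$-rationals (the two propositions in Section 2.1, i.e. the first part of Theorem \ref{sharp_and_flat}). The key structural observation is that $\overline{M}_q = M_q \cdot \overline{I}_q$, so I only need to understand how $\overline{I}_q = I_q \cdot \tau$ acts on $[\infty]^{\sharp} = \tfrac{1}{0}$ and $[\infty]^{\flat} = \tfrac{1}{1-q}$, and then feed the result through the (genuine, untwisted) $\PGL_{2,q}(\ZZ)$-action, which is already known to intertwine $[\,\cdot\,]^{\sharp}$ and $[\,\cdot\,]^{\flat}$ correctly according to the sign of the determinant.

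First I would compute $\overline{I}_q \cdot [\infty]^{\sharp}$ and $\overline{I}_q \cdot [\infty]^{\flat}$ directly. Applying $\tau$ to $\tfrac{1}{0}$ gives $\tfrac{1}{0}$ again, and applying $\tau$ to $\tfrac{1}{1-q}$ gives $\tfrac{1}{1-q^{-1}}$; then one applies the constant matrix $I_q = \begin{pmatrix} 1 & q-1 \\ 1-q & q \end{pmatrix}$ by fractional-linear action. I expect to find, after clearing the relevant power of $t$ (we work up to powers of $t$ in $\Lambda$), that $\overline{I}_q \cdot \tfrac{1}{0} = \tfrac{1}{1-q} = [\infty]^{\flat}$ and $\overline{I}_q \cdot \tfrac{1}{1-q} = \tfrac{1}{0} = [\infty]^{\sharp}$; that is, $\overline{I}_q$ swaps the two quantizations of $\infty$. (This is consistent with $\overline{I}_q$ being ``almost'' Thomas's transition map between left and right $q$-rationals, as noted after its definition.)

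Next, for general $x \in \PP^1(\QQ)$, pick $A \in \PSL_2(\ZZ)$ with $A \cdot \infty = x$; then $[x]^{\sharp} = A_q \cdot [\infty]^{\sharp}$ and $[x]^{\flat} = A_q \cdot [\infty]^{\flat}$ by definition. Since $\overline{I}_q$ commutes with the $q$-deformed action of $\PGL_2(\ZZ)$ (this is exactly the content of the Proposition characterizing $I_q$), we get $\overline{I}_q \cdot [x]^{\sharp} = \overline{I}_q \cdot A_q \cdot [\infty]^{\sharp} = A_q \cdot \overline{I}_q \cdot [\infty]^{\sharp} = A_q \cdot [\infty]^{\flat} = [x]^{\flat}$, and symmetrically $\overline{I}_q \cdot [x]^{\flat} = [x]^{\sharp}$. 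Thus $\overline{I}_q$ swaps $[x]^{\sharp}$ and $[x]^{\flat}$ for every rational $x$. Finally, write $\overline{M}_q \cdot [x]^{\sharp} = M_q \cdot \big(\overline{I}_q \cdot [x]^{\sharp}\big) = M_q \cdot [x]^{\flat}$: if $\det(M) = 1$ this equals $[M \cdot x]^{\flat}$ by the $\PSL_{2,q}(\ZZ)$-equivariance of the flat version, while if $\det(M) = -1$ it equals $[M \cdot x]^{\sharp}$ by the determinant-$(-1)$ proposition; and symmetrically for $\overline{M}_q \cdot [x]^{\flat}$. This gives exactly the four claimed identities.

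The only real obstacle is the base computation of $\overline{I}_q$ on $[\infty]^{\sharp}$ and $[\infty]^{\flat}$, and in particular keeping track of the $\tau$ (it acts on the entries $\tfrac{1}{1-q} \mapsto \tfrac{1}{1-q^{-1}}$, not on the matrix $I_q$, since $I_q$ has already absorbed the twist in the definition $\overline{I}_q = I_q \cdot \tau$) and of the ambiguity by powers of $t$ built into working in $\PGL_2(\Lambda)$. Everything else is a formal consequence of commutation of $\overline{I}_q$ with the untwisted action plus the transitivity of $\PSL_2(\ZZ)$ on $\PP^1(\QQ)$ and the previously proved intertwining properties.
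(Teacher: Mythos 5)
Your proof is correct and follows essentially the same route as the paper: reduce to the fact that $\overline{I}_q$ swaps $[x]^{\sharp}$ and $[x]^{\flat}$, then apply the previously established determinant-dependent intertwining of the untwisted action. The paper simply asserts the swap property of $\overline{I}_q$ without justification, whereas you supply the missing details (the base computation at $\infty$ and the commutation argument), which is a welcome elaboration rather than a different approach.
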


\begin{proof}
\noindent These equalities are direct consequences of what is above, and the of behaviour of $\overline{I}_q$ : 
$$
\overline{I}_q \cdot [x]^{\sharp} = [x]^{\flat} \text{ and } \overline{I}_q \cdot [x]^{\flat} = [x]^{\sharp}.
$$
\end{proof}

\begin{ex}
Let us take $M = R$ and $x = n\in \NN^*$. We have 
\begin{align*}
\overline{R}_q \cdot [n]^{\sharp} = \begin{pmatrix}
	1 & q^2  \\
	1-q & q\\
\end{pmatrix} \cdot [n]^{\sharp}_{q^{-1}} &= \frac{[n]^{\sharp}_{q^{-1}}+q^2}{(1-q)[n]^{\sharp}_{q^{-1}}+q} \\
&= q^{n-1}[n]^{\sharp}_{q^{-1}} + q^{n+1}\\
&= [n]^{\sharp}_q + q^{n+1}\\
&= [n+1]^{\flat}.
\end{align*}
\end{ex}

\begin{coro}
Let $x \in \PP^1(\QQ)$. Then 
$$
[-x]^{\sharp} = -q^{-1}[x]^{\sharp}_{q^{-1}} = \frac{-[x]_q^{\flat} + 1 - q^{-1} }{(q-1)[x]_q^{\flat} + 1}.
$$

$$
\left[\frac{1}{x}\right]^{\sharp} = \frac{1}{[x]_{q^{-1}}^{\sharp}} = \frac{(q-1)[x]_q^{\flat} +1}{q[x]_q^{\flat} + 1-q}.
$$
\end{coro}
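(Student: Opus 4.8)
The plan is to read off both chains of equalities from the two structural propositions of this section, specialized to the matrices $N = \begin{pmatrix} -1 & 0 \\ 0 & 1 \end{pmatrix}$ and $J = NS = \begin{pmatrix} 0 & 1 \\ 1 & 0 \end{pmatrix}$, both of which have determinant $-1$, and to substitute the explicit shapes of the associated $q$-matrices into the fractional-linear action.

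First I would treat the first line. Since $\det(N) = -1$, the proposition on twisted matrices gives $\overline{N}_q \cdot [x]^{\sharp} = [N\cdot x]^{\sharp} = [-x]^{\sharp}$. From the computation $\overline{N}_q = \begin{pmatrix} -1 & 0 \\ 0 & q \end{pmatrix}\cdot \tau$ in the example above, this operator sends a Laurent series $f$ to $\overline{N}_q \cdot f = -q^{-1} f(q^{-1})$, so taking $f = [x]^{\sharp}$ yields the middle term $-q^{-1}[x]^{\sharp}_{q^{-1}}$. For the last term I would instead invoke the proposition on matrices of determinant $-1$: $N_q \cdot [x]^{\flat} = [N\cdot x]^{\sharp} = [-x]^{\sharp}$, and evaluating the fractional-linear action of $N_q = \begin{pmatrix} -1 & 1-q^{-1} \\ q-1 & 1 \end{pmatrix}$ on $[x]^{\flat}$ produces exactly $\dfrac{-[x]^{\flat}_q + 1 - q^{-1}}{(q-1)[x]^{\flat}_q + 1}$.

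The second line is entirely parallel with $J$ in place of $N$. Since $\det(J) = -1$, the twisted proposition gives $\overline{J}_q \cdot [x]^{\sharp} = [J\cdot x]^{\sharp} = \left[\tfrac{1}{x}\right]^{\sharp}$, and the example computation $\overline{J}_q = \begin{pmatrix} 0 & 1 \\ 1 & 0 \end{pmatrix}\cdot \tau$ shows this operator sends $f$ to $1/f(q^{-1})$, giving the middle term $1/[x]^{\sharp}_{q^{-1}}$. For the right-hand term the determinant $-1$ proposition gives $J_q \cdot [x]^{\flat} = [J\cdot x]^{\sharp} = \left[\tfrac{1}{x}\right]^{\sharp}$, and the fractional-linear action of $J_q = N_qS_q = \begin{pmatrix} q-1 & 1 \\ q & 1-q \end{pmatrix}$ on $[x]^{\flat}$ is exactly $\dfrac{(q-1)[x]^{\flat}_q + 1}{q[x]^{\flat}_q + 1 - q}$.

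There is no genuine obstacle here: everything reduces to the two propositions already established plus substituting the known matrices $N_q$, $\overline{N}_q$, $J_q$, $\overline{J}_q$ into the action by fractional-linear transformations. The only point deserving a word of care is that these matrices are only defined up to powers of $t$ (and of $q$); but since the action on $\PP^1(\ZZ(q))$ factors through $\PGL_2(\Lambda)$, such scalar ambiguities do not affect the resulting elements of $\PP^1(\ZZ(q))$, so the equalities hold as stated in $\Lambda$.
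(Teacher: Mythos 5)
Your proof is correct and follows exactly the route the paper intends: the paper states this corollary without proof as an immediate consequence of the twisted-matrix proposition (for the middle terms, via $\overline{N}_q$ and $\overline{J}_q$) and of the determinant~$-1$ proposition from Section~2.1 (for the right-hand terms, via $N_q$ and $J_q$ acting on $[x]^{\flat}$). Your remark about scalar ambiguities by powers of $q$ and $t$ being absorbed in $\PGL_2(\Lambda)$ is the right point of care and is consistent with the paper's conventions.
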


\section{Focus on $q$-real numbers}

\subsection{Convergences}

Recall the definition of $q$-deformed real numbers, in \cite{MGOr}. This takes place in the ring $\ZZ[[q]][q^{-1}]$ of formal Laurent series in $q$, 

$$
\ZZ[[q]][q^{-1}] = \left\lbrace \sum_{k\geq \nu} x_{k}q^k ~|~ x_k \in \ZZ, ~ \nu \in \ZZ \right\rbrace,
$$

\noindent endowed with its usual metric relying on the valuation $\nu$ of formal Laurent series,
$$
d(f,g) = \frac{1}{2^{\nu(f-g)}}.
$$

Thus saying that a sequence $\left(x_n = \sum_{k\geq \nu_n} x_{n,k}q^k \right)_n$ in $\ZZ[[q]][q^{-1}]$ converges toward $y = \sum_{k\geq \nu} y_k q^k$ means exactly that the valuations $\nu_n$ stabilize to $\nu$ and the coefficients $x_{n,k}$ stabilize to $y_k$ as $n$ grows,
$$\forall k\in \ZZ, \exists N\in \NN, \forall n\geq N, x_{n,k} = y_k.$$

\begin{prop}[\cite{MGOr}]
\label{right_convergence}
Let $x \in \RR \setminus \QQ$ be an irrational number. Then for any sequence of rational numbers $(x_n)_n$ converging to $x$, the Taylor expansions of $[x_n]^{\sharp}$ converge toward a Laurent series that does not depend on the choice of the sequence $(x_n)_n$.
\end{prop}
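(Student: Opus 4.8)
The plan is to exploit the combinatorial control over $q$-deformed rationals that the continued-fraction machinery provides, rather than to analyze arbitrary converging sequences directly. First I would reduce to a canonical approximating sequence: given an irrational $x$ with (infinite) regular continued fraction expansion $x = [a_1, a_2, a_3, \dots]$ with $a_i \in \NN^*$ for $i \geq 2$, let $x_n = [a_1, \dots, a_n]$ be its convergents. The key claim is that the Taylor series of $[x_n]^{\sharp}$, viewed in $\ZZ[[q]]$ after clearing the leading power of $q$, stabilize coefficient-by-coefficient as $n \to \infty$. This follows from Proposition~\ref{prop_de_mgo}: writing $[x_n]^{\sharp} = U_n^{\sharp}/V_n^{\sharp}$ where $\begin{pmatrix} qU_n^{\sharp} & * \\ qV_n^{\sharp} & * \end{pmatrix}$ is, up to an explicit monomial-times-$t^m$ factor, the product $R_q^{a_1} J_q \cdots R_q^{a_n} J_q$, one checks that passing from $x_n$ to $x_{n+1}$ multiplies this matrix product on the right by $R_q^{a_{n+1}} J_q$ (or a block of two such), and that these factors, being congruent to $\begin{pmatrix} * & * \\ q \cdot(\text{stuff}) & * \end{pmatrix}$ with the new contributions carrying strictly higher powers of $q$, only alter the rational function $U_n^{\sharp}/V_n^{\sharp}$ in degrees $\geq$ some bound tending to infinity with $n$. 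Concretely, $\nu([x_{n+1}]^{\sharp} - [x_n]^{\sharp})$ grows without bound, so the sequence is Cauchy in $\ZZ[[q]][q^{-1}]$ and converges to some $[x]_q$.

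The second step is independence of the choice of sequence. Here I would argue that any sequence $(y_m)$ of rationals converging to $x$ (in the usual real topology) eventually shares arbitrarily long initial continued-fraction segments with $x$: if $|y_m - x|$ is small, then the first $k(m)$ partial quotients of $y_m$ agree with those of $x$, where $k(m) \to \infty$. By the stabilization estimate from Step~1 applied to $y_m$ and to the convergent $x_{k(m)}$ — both of which start with the same $a_1, \dots, a_{k(m)}$ — the valuation $\nu([y_m]^{\sharp} - [x_{k(m)}]^{\sharp})$ is at least a quantity tending to infinity, since the matrix products agree on their first $k(m)$ factors and differ only by a tail that contributes in high $q$-degree. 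Hence $[y_m]^{\sharp} \to [x]_q$ as well, so the limit does not depend on the approximating sequence. The one genuine subtlety is handling the parity convention in the even continued fraction expansion used in Proposition~\ref{prop_de_mgo} and the case distinctions $\min(0,a_1)$; these are cosmetic and absorbed into the leading monomial.

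The main obstacle is making the stabilization estimate in Step~1 quantitative and uniform: one must show that the $q$-adic valuation of the difference between successive (or comparable) $q$-convergents genuinely tends to infinity, and not merely that the coefficients eventually settle for each fixed degree. This requires a careful bookkeeping of how a single factor $R_q^{a}J_q = \begin{pmatrix} q[a]^{\flat} & 1 \\ q & 1-q \end{pmatrix}$ propagates through the Möbius action: if $f = U/V$ with $V(0) \neq 0$ (the normalization one maintains along the recursion), then $R_q^a J_q \cdot f$ differs from the "truncated" version by a term whose valuation exceeds that of $f$ by at least the length of the block processed, giving a lower bound linear in $n$. Once this estimate is in hand — and it is essentially the content of the convergence arguments in \cite{MGOr}, which I would cite or reprove in this normalization — both the Cauchy property and the sequence-independence follow formally from the ultrametric structure of $\ZZ[[q]][q^{-1}]$.
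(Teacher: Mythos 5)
First, a point of comparison: the paper does not prove this proposition at all --- it is recalled from \cite{MGOr} (it is the stabilization theorem there), so there is no in-paper argument to measure your attempt against. Your outline reproduces the strategy of the original proof: reduce to the convergents $x_n=[a_1,\dots,a_n]$, show that $\nu\bigl([x_{n+1}]^{\sharp}-[x_n]^{\sharp}\bigr)\to\infty$ using the continued-fraction matrix factorization of Proposition \ref{prop_de_mgo}, and handle an arbitrary approximating sequence via the cylinder argument (a rational close enough to the irrational $x$ has a continued fraction expansion beginning with an arbitrarily long prefix of that of $x$). That structure is correct, and the parity and $\min(0,a_1)$ issues you flag really are cosmetic.

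The point I would press you on is that the load-bearing step --- the quantitative valuation estimate --- is asserted rather than proved, and your stated fallback is to ``cite or reprove'' it from \cite{MGOr}; citing it is circular, since the proposition you are proving \emph{is} the convergence theorem of \cite{MGOr}. The estimate does admit a short self-contained proof in the framework of this paper. If $M_q=R_q^{a_1}J_q\cdots R_q^{a_k}J_q=\left(\begin{smallmatrix}A&B\\C&D\end{smallmatrix}\right)$ is the matrix of the common prefix, then for any two tails $f,g$ one has
$$ M_q\cdot f-M_q\cdot g=\frac{\det(M_q)\,(f-g)}{(Cf+D)(Cg+D)},\qquad \det(M_q)=(-t)^{k}\,q^{a_1+\cdots+a_k}, $$
and since $t=q^2-q+1$ is a unit of $\ZZ[[q]]$ and the denominators have valuation $0$ in the usual normalization (for $x>1$, say, Proposition \ref{prop_de_mgo} shows $C$ is divisible by $q$ while $D$ has nonzero constant term, and the tails lie in $\ZZ[[q]]$), the difference has valuation at least $a_1+\cdots+a_k\geq k$. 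This single inequality simultaneously gives the Cauchy property along the convergents and the independence of the approximating sequence, exactly as you describe; with it written down your plan closes, but without it the proposal is a correct roadmap rather than a proof.
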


\begin{defi}
With the previous notations, the resulting series of the convergence of the Taylor expansions of $([x_n]^{\sharp})_n$ is defined to be the quantization of $x$ and is denoted by $[x]_q$. 
\end{defi}

For irrational numbers, the left and right quantizations are the same. 

\begin{prop}
Let $x \in \RR \setminus \QQ$ be an irrational number. Let $(x_n)_n$ be a sequence of rational numbers converging to $x$. Then, the sequence $([x_n]^{\sharp} - [x_n]^{\flat})_n$ converges to the zero series. 
\end{prop}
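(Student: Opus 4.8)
The plan is to reduce the claim to a uniform estimate on the $q$-deformation of a single point, and then propagate it along a convergent sequence using the explicit relation between $[\,\cdot\,]^\sharp$ and $[\,\cdot\,]^\flat$. The key observation is that, by the Definition of the left and right versions, $[\infty]^\sharp = \tfrac{1}{0}$ and $[\infty]^\flat = \tfrac{1}{1-q}$, and more generally, if $x = M\cdot\infty$ with $M\in\PSL_2(\ZZ)$, then $[x]^\sharp = M_q\cdot\tfrac{1}{0}$ while $[x]^\flat = M_q\cdot\tfrac{1}{1-q}$. So the difference $[x]^\sharp - [x]^\flat$ is controlled by how far apart $\tfrac{1}{0}$ and $\tfrac{1}{1-q}$ are, transported by $M_q$.

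The first step is to make this precise: writing $M_q = \begin{pmatrix} a & b \\ c & d\end{pmatrix}$ with entries in $\ZZ[q,q^{-1}]$, one has $[x]^\sharp = \tfrac{a}{c}$ and $[x]^\flat = \tfrac{a + b(1-q)}{c + d(1-q)}$, so a direct computation gives
\begin{equation}
[x]^\sharp - [x]^\flat = \frac{(1-q)(ad - bc)}{c\,(c + d(1-q))} = \frac{(1-q)\det(M_q)}{c\,(c+d(1-q))}.
\end{equation}
Since $\det(M_q)$ is a unit in $\ZZ[[q]][q^{-1}]$ (a signed power of $q$, as $M\in\PSL_2(\ZZ)$), the valuation of this difference is $1 + \nu(\text{unit}) - \nu(c) - \nu(c+d(1-q))$. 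The next step is to show that, along a sequence $(x_n)$ of rationals converging to the irrational $x$, the valuations $\nu(c_n)$ of the lower-left entries of the associated $q$-matrices tend to $+\infty$. This is where the irrationality of $x$ enters: the continued fraction expansion of $x$ is infinite, so choosing $x_n$ to be the convergents (or any sequence approaching $x$), the matrices $M^{(n)}_q$ are products of $q$-deformed elementary matrices $R_q^{a_i}J_q$ (or $R_q^{a_i} S_q$) of growing length, and each such factor raises the valuation of the lower-left entry. Concretely, using Proposition \ref{prop_de_mgo} (or Proposition \ref{shape_matrix_positive}), the lower-left entry is $q V^\sharp$ where $V^\sharp$ is the denominator of $[u_n/v_n]^\sharp$, and this denominator, evaluated via the recursive $q$-deformation, has a factor of $q$ appearing with multiplicity growing in the length of the continued fraction of $x_n$ — equivalently, $\nu([x_n]^\sharp - [x]_q) \to \infty$ by Proposition \ref{right_convergence}.

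Actually the cleanest route avoids a separate valuation bound on $c_n$ and instead bootstraps from Proposition \ref{right_convergence} directly: the sequence $([x_n]^\sharp)_n$ converges in $\ZZ[[q]][q^{-1}]$ to $[x]_q$, hence is Cauchy, so it suffices to prove $([x_n]^\flat)_n$ converges to the same limit. Using Theorem \ref{sharp_and_flat} applied to $N$ (or the Corollary at the end of Section 3), one has $[x_n]^\flat = N_q^{-1}\cdot[-x_n]^\sharp$ up to the unit scalars tracked there; since $-x_n \to -x$ and $N$ acts on $\PP^1(\RR)$ continuously, Proposition \ref{right_convergence} applied to the sequence $(-x_n)$ shows $[-x_n]^\sharp$ converges, and applying the fixed Möbius transformation $N_q^{-1}$ (which is continuous on $\PP^1(\ZZ(q))$ away from its pole, and the limit $[-x]_q$ is not at the pole since $x$ is irrational) gives that $[x_n]^\flat$ converges to $N_q^{-1}\cdot[-x]_q = [x]_q$. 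Subtracting the two convergent sequences with the same limit yields the claim.

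The main obstacle is the continuity/pole issue in that last step: one must check that the constant matrix $N_q$ (and $I_q$, if one routes through $\overline{I}_q$ instead) does not send the limiting Laurent series to $\infty$, i.e. that the relevant denominator has finite valuation. For an irrational $x$ this is automatic because $[x]_q\in\ZZ[[q]][q^{-1}]$ is a genuine Laurent series and the action of $N_q$ is given by $f\mapsto \tfrac{-f + 1 - q^{-1}}{(q-1)f + 1}$, whose denominator $(q-1)f+1$ has valuation $0$ (constant term $1$); similarly $\overline I_q$ acts by $f\mapsto \tfrac{f(q^{-1}) + q - 1}{(1-q)f(q^{-1}) + q}$, again with a denominator of valuation $0$. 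Granting this, the argument closes; the remaining verifications are the routine algebraic identities displayed above.
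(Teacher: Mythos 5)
Your opening computation is the right one, and it is in fact the paper's own starting point: writing $M_q = \begin{pmatrix} a & b \\ c & d \end{pmatrix}$, the difference is $\frac{(1-q)(ad-bc)}{c\,(c+d(1-q))}$. But neither of your two proposed ways of finishing actually closes the argument. In the first route you assert that the valuations $\nu(c_n)$ of the lower-left entries tend to $+\infty$, because ``the denominator $V^\sharp$ has a factor of $q$ appearing with multiplicity growing in the length of the continued fraction.'' This is false: taking $x_n$ to be the convergents of $x$, the denominator $\mathcal{S}_n$ of $[x_n]^{\sharp}$ is a polynomial with constant coefficient $1$, so $\nu(c_n)$ is bounded independently of $n$. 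Moreover, even if it were true it would work against you, since $\nu(c_n)$ enters your own valuation formula with a minus sign and would drive the valuation of the difference to $-\infty$, i.e.\ divergence. The quantity whose valuation actually grows is the determinant in the numerator: $\mathcal{R}_n\mathcal{S}_{n-1}-\mathcal{R}_{n-1}\mathcal{S}_n = \pm q^{a_1+\cdots+a_n-1}$ up to a fixed normalization $q^{d}$, and $a_1+\cdots+a_n\to\infty$ precisely because the continued fraction of the irrational $x$ is infinite. That observation, together with the fact that the denominator $q\mathcal{S}_n(\mathcal{S}_n-\mathcal{S}_{n-1})+\mathcal{S}_{n-1}$ has valuation $0$, is the entire proof (and is what the paper does); you had all the ingredients but located the growing power of $q$ in the wrong factor.

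The ``cleanest route'' you substitute for this is circular. From $[x_n]^{\flat} = N_q\cdot[-x_n]^{\sharp}$ and Proposition~\ref{right_convergence} you correctly obtain $[x_n]^{\flat}\to N_q\cdot[-x]_q$, but the final identification $N_q\cdot[-x]_q=[x]_q$ is exactly the second half of Theorem~\ref{sharp_and_flat} for irrational arguments, which the paper deduces \emph{from} the present proposition (via Corollary~\ref{left_convergence}); no independent proof of it is available at this stage. Indeed, unwinding the definitions, $N_q\cdot[-x]_q=\lim_n N_q\cdot[y_n]^{\sharp}=\lim_n[-y_n]^{\flat}$ for rationals $y_n\to -x$, so asserting that this limit equals $[x]_q=\lim_n[-y_n]^{\sharp}$ is precisely the statement you are trying to prove. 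Your continuity and pole checks in the last paragraph are correct but do not address this circularity; to repair the proof you should return to the first route and replace the claim about $\nu(c_n)$ by the valuation estimate on the determinant.
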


\begin{proof}
We can suppose that $(x_n)_n$ is actually the sequence of convergents of $x$ : $x_n = [a_0,a_1,a_2,\cdots, a_n]$, with $a_0\in \ZZ$ and $a_i \in \ZZ_{\geq 1}$ for all $i \geq 1$. \\

\noindent Let us write, for all $n \in \NN$, $[x_n]^{\sharp} = \pm q^{d}\frac{\mathcal{R}_n(q)}{\mathcal{S}_n(q)}$, with $\mathcal{R}_n$ and $\mathcal{S}_n$ two coprime monic polynomials in $q$ with constant coefficient $1$. The integer $d$ depends on $x$ as 
$$
d = \begin{cases}
		0 \text{ if } x \geq 1\\
		\lfloor 1/x \rfloor \text{ if } 0 < x <1\\
		\lfloor x \rfloor \text{ if } x < 0\\
		\end{cases}.
$$

\noindent The left version of $x_n$ is then
$$ [x_n]^{\flat} = \pm q^{d}~ \frac{q\mathcal{R}_n(q)+(1-q)\mathcal{R}_{n-1}(q)}{q\mathcal{S}_n(q) + (1-q)\mathcal{S}_{n-1}(q)},$$

\noindent so 
$$[x_n]^{\sharp} - [x_n]^{\flat} = \pm q^{d}\frac{(1-q)(\mathcal{R}_n(q)\mathcal{S}_{n-1}(q) - \mathcal{R}_{n-1}(q)\mathcal{S}_n(q))}{q\mathcal{S}_n(q)^2 + (1-q)\mathcal{S}_n(q)\mathcal{S}_{n-1}(q)} = \frac{(1-q)q^{d + a_1+\cdots + a_n -1}}{q\mathcal{S}_n(q)(\mathcal{S}_n(q)-\mathcal{S}_{n-1}(q)) + \mathcal{S}_{n-1}(q)}.$$

\noindent And the series $1/(q\mathcal{S}_n(q)(\mathcal{S}_n(q)-\mathcal{S}_{n-1}(q))+\mathcal{S}_{n-1}(q))$ starts with the coefficient $1$ thanks to $\mathcal{S}_{n-1}$, so the series $[x_n]^{\sharp} - [x_n]^{\flat}$ starts with $q^{d + a_1+\cdots a_n -1}$, and thus converges to the zero series when $n$ grows.
\end{proof}

\begin{coro}
\label{left_convergence}
Let $x\in \RR \setminus \QQ$ be an irrational number. Let $(x_n)_n$ be a sequence of rational numbers converging to $x$. Then the sequence of left $q$-deformed numbers $([x_n]^{\flat})_n$ converges to $[x]_q$.
\end{coro}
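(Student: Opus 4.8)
Corollary \ref{left_convergence} follows almost immediately from the preceding proposition together with Proposition \ref{right_convergence}. The plan is to combine the two convergence statements using the triangle inequality for the metric $d$ on $\ZZ[[q]][q^{-1}]$. First I would fix an arbitrary sequence $(x_n)_n$ of rational numbers converging to $x$; by Proposition \ref{right_convergence}, the Taylor expansions $[x_n]^{\sharp}$ converge to $[x]_q$. By the previous proposition, the sequence $([x_n]^{\sharp} - [x_n]^{\flat})_n$ converges to the zero series. Writing $[x_n]^{\flat} = [x_n]^{\sharp} - ([x_n]^{\sharp} - [x_n]^{\flat})$, the sequence $([x_n]^{\flat})_n$ is the difference of two convergent sequences, hence converges to $[x]_q - 0 = [x]_q$.

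The one subtlety to address is that the previous proposition is stated only for the particular sequence of convergents $x_n = [a_0, a_1, \dots, a_n]$ of $x$, whereas the corollary allows an arbitrary sequence converging to $x$. So the second step is to reduce to the case of an arbitrary sequence. For this I would invoke the standard fact (implicit in Proposition \ref{right_convergence} and its proof) that for \emph{any} sequence of rationals $(x_n)_n$ converging to $x$, the sequences $([x_n]^{\sharp})_n$ and $([x_n]^{\flat})_n$ are Cauchy and their limits do not depend on the chosen sequence: indeed two sequences converging to the same $x$ can be interleaved into a single sequence still converging to $x$, forcing the limits to agree. Applying this to $([x_n]^{\flat})_n$, its limit equals the limit obtained along the sequence of convergents, which by the previous paragraph is $[x]_q$.

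I do not expect any real obstacle here — the whole content has already been extracted in the preceding proposition, which does the genuine work (the explicit computation showing $[x_n]^{\sharp} - [x_n]^{\flat}$ starts at order $q^{d + a_1 + \cdots + a_n - 1}$, an exponent tending to $+\infty$). The corollary is a two-line consequence: convergence of $[x_n]^{\sharp}$ to $[x]_q$ plus convergence of the difference to $0$ gives convergence of $[x_n]^{\flat}$ to $[x]_q$, with only the harmless remark about passing from the canonical convergent sequence to an arbitrary one. If one wanted to be fully self-contained about that last point, the cleanest route is to note that $d(f,g)$ is an ultrametric, so $d([x_n]^{\flat}, [x]_q) \leq \max\big(d([x_n]^{\flat}, [x_n]^{\sharp}),\, d([x_n]^{\sharp}, [x]_q)\big)$, and both terms on the right tend to $0$.
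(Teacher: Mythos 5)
Your proof is correct and matches the paper's intent exactly: the corollary is stated as an immediate consequence of Proposition \ref{right_convergence} together with the preceding proposition, via $[x_n]^{\flat} = [x_n]^{\sharp} - ([x_n]^{\sharp} - [x_n]^{\flat})$ (or equivalently the ultrametric bound you give at the end). Your second paragraph is not needed at the level of the corollary, since the preceding proposition is already \emph{stated} for an arbitrary sequence $(x_n)_n$ (the reduction to convergents is an issue internal to that proposition's proof, not to this deduction), and as written your interleaving argument is slightly circular for $[x_n]^{\flat}$ because its Cauchyness for arbitrary sequences is part of what is being proved.
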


The following corollary is the second part of Theorem \ref{sharp_and_flat}. 

\begin{coro}
Let $x\in \RR \setminus \QQ$ be an irrational number. Let $M \in \PGL_2(\ZZ)$, with $\det(M) = -1$. Then 
$$M_q \cdot [x]_q = [M\cdot x]_q.$$
\end{coro}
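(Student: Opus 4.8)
The plan is to deduce the irrational case from the rational part of Theorem~\ref{sharp_and_flat} (proved in Section~2.1) by a density-and-continuity argument, in the spirit of the preceding corollary. Fix a sequence $(x_n)_n$ of rational numbers converging to $x$, say the convergents of the continued fraction of $x$. Since $x$ is irrational, $x\neq M^{-1}\cdot\infty$, hence $M\cdot x\in\RR\setminus\QQ$ (it is finite, and irrational because $\det(M)\neq0$), and $M\cdot x_n\to M\cdot x$ with $M\cdot x_n\in\QQ$ for all large $n$, fractional-linear transformations being continuous on $\PP^1(\RR)$. For each such $n$, the rational part of Theorem~\ref{sharp_and_flat} (with $\det(M)=-1$) gives
$$
M_q\cdot[x_n]^{\sharp}=[M\cdot x_n]^{\flat},
$$
while Proposition~\ref{right_convergence} gives $[x_n]^{\sharp}\to[x]_q$ and Corollary~\ref{left_convergence} gives $[M\cdot x_n]^{\flat}\to[M\cdot x]_q$, both in the metric of $\ZZ[[q]][q^{-1}]$. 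Thus it remains to identify $\lim_n M_q\cdot[x_n]^{\sharp}$ with $M_q\cdot[x]_q$ and appeal to uniqueness of limits.

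For this I would work inside the valued field $\QQ((q))$ of formal Laurent series over $\QQ$, which contains $\ZZ[[q]][q^{-1}]$ with the same valuation, and in which addition, multiplication, and inversion of nonzero elements are continuous. Write $M_q=\begin{pmatrix} a & b\\ c & d\end{pmatrix}$ with $a,b,c,d\in\Lambda$; continuity of the affine maps $f\mapsto af+b$ and $f\mapsto cf+d$ gives $a[x_n]^{\sharp}+b\to a[x]_q+b$ and $c[x_n]^{\sharp}+d\to c[x]_q+d$. Provided $c[x]_q+d\neq0$, continuity of inversion then yields
$$
M_q\cdot[x_n]^{\sharp}=\frac{a[x_n]^{\sharp}+b}{c[x_n]^{\sharp}+d}\ \longrightarrow\ \frac{a[x]_q+b}{c[x]_q+d}=M_q\cdot[x]_q,
$$
and comparing with $M_q\cdot[x_n]^{\sharp}=[M\cdot x_n]^{\flat}\to[M\cdot x]_q$ finishes the proof.

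The point I expect to be the real obstacle is the nonvanishing of $c[x]_q+d$ in $\QQ((q))$. I would argue by contradiction: if $c[x]_q+d=0$, then $\nu\big(c[x_n]^{\sharp}+d\big)\to+\infty$; but $[M\cdot x_n]^{\flat}=(a[x_n]^{\sharp}+b)/(c[x_n]^{\sharp}+d)$ converges to $[M\cdot x]_q$, which is a nonzero Laurent series. Indeed, just as in the discussion preceding Corollary~\ref{left_convergence}, each $[M\cdot x_n]^{\flat}$ is $\pm q^{d'}$ times a ratio of monic polynomials with constant term $1$, so $\nu\big([M\cdot x_n]^{\flat}\big)$ is eventually equal to the fixed integer $d'$ attached to $M\cdot x$. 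Hence $\nu\big(a[x_n]^{\sharp}+b\big)=\nu\big([M\cdot x_n]^{\flat}\big)+\nu\big(c[x_n]^{\sharp}+d\big)\to+\infty$, i.e.\ $a[x]_q+b=0$ as well; but then $M_q$ would annihilate the nonzero vector with entries $[x]_q$ and $1$, contradicting the invertibility of $M_q$ over $\QQ((q))$. Therefore $c[x]_q+d\neq0$ and the argument closes. (Alternatively one could invoke that $[x]_q$ is never a rational function of $q$ for irrational $x$, hence cannot equal the constant $-d/c$; the valuation argument has the virtue of being self-contained.)
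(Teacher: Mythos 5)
Your proposal is correct and follows essentially the same route as the paper: approximate $x$ by rationals, apply the rational case of Theorem~\ref{sharp_and_flat}, and pass to the limit using Proposition~\ref{right_convergence} and Corollary~\ref{left_convergence}. The only difference is that the paper dispatches the convergence of $M_q\cdot[x_n]^{\sharp}$ to $M_q\cdot[x]_q$ with the single word ``continuity,'' whereas you justify it carefully (in particular the nonvanishing of the denominator $c[x]_q+d$), which is a welcome addition rather than a deviation.
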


\begin{proof}
Let $(x_n)_{n\geq 0}$ be any sequence of rational numbers converging to $x$. For all $n \geq 0$, by the first part of Theorem \ref{sharp_and_flat},
$$
M_q \cdot [x_n]^{\sharp} = [M \cdot x_n]^{\flat}.
$$
Then by Proposition \ref{right_convergence} and continuity, $M_q \cdot [x_n]^{\sharp}$ converges toward $M_q \cdot [x]_q$ in the sense of Laurent series, and by Corollary \ref{left_convergence} $[M\cdot x_n]^{\flat}$ converges toward $[M\cdot x]_q$. Therefore
$$
M_q\cdot [x]_q = [M\cdot x]_q.
$$
\end{proof}

\begin{ex} For every irrational number $x$, we have the formulas
$$ 
[-x]_q = [N]_q \cdot [x]_q \text{ and } \left[\frac{1}{x}\right]_q = [N S]_q \cdot [x]_q.
$$ 

More concretely,
$$[-x]_q = \frac{-[x]_q + 1-q^{-1}}{(q-1)[x]_q + 1} \text{ and } \left[\frac{1}{x}\right]_q = \frac{(q-1)[x]_q + 1}{q[x]_q + 1 -q}.$$

\noindent These formulas could not be obtained without a quantization of the $\PGL_2(\ZZ)$-action. In \cite{Leclere_modular}, it has already been noticed that for $x\in \QQ$,
$$
\left[\frac{1}{x}\right]^{\sharp} = \frac{1}{[x]_{q^{-1}}^{\sharp}} = \overline{J_q}\cdot [x]^{\sharp},
$$
\noindent but this formula could not be generalized to the case of irrational numbers, as there is no clear way of defining the operator $\tau $ on formal Laurent series.
\end{ex}

\begin{ex}
The radius of convergence of the series $\left[\frac{1}{\pi}\right]_q$ is the same as the one of $[\pi]_q$, and one can compute the terms of the quantized series of $1/\pi$ with the formula
$$
\left[\frac{1}{\pi}\right]_q = \frac{(q-1)[\pi]_q + 1}{q[\pi]_q + 1-q}.
$$
The first 30 terms of the series $[1/\pi]_q$ are 
$$
\begin{array}{c}
q^{3} -q^{5} -q^{6} + q^{7} + 2q^{8} -4q^{10} -q^{11} + 5q^{12} + 5q^{13} -6q^{14} -11q^{15} + 3q^{16} + 20q^{17} + 6q^{18} -28q^{19} \\
-26q^{20} + 31q^{21} + 58q^{22} -17q^{23} -103q^{24} -28q^{25} + 146q^{26} + 131q^{27}  -165q^{28} -299q^{29} + 94q^{30}...
\end{array}
$$

\end{ex}

Finally, the terminology of left and right quantizations comes from the following.

\begin{prop}[\cite{MGOr,BBL}] 
Let $x\in \QQ$ be a rational number.
\begin{itemize}
	\item[(i)] Let $(x_n)_n$ be a sequence of rational numbers converging to $x$ by the right. Then the sequences of right and left $q$-deformed numbers $([x_n]^{\sharp})_n$ and $([x_n]^{\flat})_n$ converge to $[x]^{\sharp}$.
	\item[(ii)] Let $(x_n)_n$ be a sequence of rational numbers converging to $x$ by the left. Then the sequences of right and left $q$-deformed numbers $([x_n]^{\sharp})_n$ and $([x_n]^{\flat})_n$ converge to $[x]^{\flat}$.
\end{itemize}
\end{prop}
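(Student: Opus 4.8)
The plan is to establish (i) in full and then deduce (ii) from it by the order-reversing symmetry $x\mapsto -x$, so that only the right-hand case requires genuine work. Within (i) itself I would first prove the convergence of the right deformations $[x_n]^{\sharp}$ and then transfer the conclusion to the left deformations $[x_n]^{\flat}$ essentially for free.

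The core computation is the elementary limit
$$[n]^{\sharp}=\frac{q^{n}-1}{q-1}\ \longrightarrow\ \frac{1}{1-q}=[\infty]^{\flat}\qquad(n\to\infty)$$
in the Laurent topology: the valuations stabilize and the low-order coefficients become eventually constant because $q^{n}\to 0$. More generally $R_q^{n}\cdot z\to[\infty]^{\flat}$ for any fixed finite point $z$, since $R_q^{n}\cdot z=q^{n}z+[n]^{\sharp}$. This expresses the key idea: pushing the last positive partial quotient to infinity drives the right endpoint of a Farey interval onto the \emph{flat} point $[\infty]^{\flat}$. For a rational $x=M\cdot\infty$ I would use the Stern--Brocot description of the rationals lying just to the right of $x$, realizing each such $x_n$ as $M_q\cdot(R_q^{n}\cdot z)$, where $M_q$ is the $q$-deformation of the stable prefix, with $M\cdot\infty=x$ and $\det M=-1$. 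Combining the limit above with the continuity of the fractional-linear action on Laurent series (the relevant denominators remaining units in the limit) and with Theorem \ref{sharp_and_flat}, one gets
$$[x_n]^{\sharp}=M_q\cdot(R_q^{n}\cdot z)\ \longrightarrow\ M_q\cdot[\infty]^{\flat}=[M\cdot\infty]^{\sharp}=[x]^{\sharp}.$$
The left version then follows at no extra cost: for any rational $y$ the difference estimate established just before Corollary \ref{left_convergence} shows that $[y]^{\sharp}-[y]^{\flat}$ has valuation bounded below by a quantity tending to infinity as the denominator of $y$ grows, and since $x_n\to x$ forces these denominators to diverge, $[x_n]^{\sharp}-[x_n]^{\flat}\to 0$; hence $[x_n]^{\flat}$ shares the limit $[x]^{\sharp}$.

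For a \emph{general} right-approaching sequence I would not assume its members are the canonical Farey extensions used above; instead I would invoke the stabilization of Taylor coefficients underlying Proposition \ref{right_convergence}: for fixed $N$, all rationals sufficiently close to $x$ from the right share a continued-fraction prefix deep enough to force their right $q$-deformations to agree with $[x]^{\sharp}$ modulo $q^{N}$. This reduces an arbitrary right-approaching sequence to the canonical one. I expect this uniform stabilization, together with the bookkeeping that matches the \emph{right} side to the prefix matrix of determinant $-1$ (equivalently, to the even/sharp convention of Proposition \ref{prop_de_mgo} versus the odd/flat convention of Proposition \ref{shape_matrix_positive}), to be the main obstacle, since it is exactly the point where the parity of the continued fraction and the direction of approach must be reconciled; the sign of the leading $q$-power and the factors of $t$ produced by Proposition \ref{prop_de_mgo} must be tracked carefully here.

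Finally, (ii) follows formally from (i). If $x_n\to x$ from the left, then $-x_n\to -x$ from the right, so by (i) both $[-x_n]^{\sharp}$ and $[-x_n]^{\flat}$ converge to $[-x]^{\sharp}$. Applying Theorem \ref{sharp_and_flat} with $M=N$ (so $N\cdot y=-y$ and $\det N=-1$) gives $[x_n]^{\sharp}=N_q^{-1}\cdot[-x_n]^{\flat}$ and $[x_n]^{\flat}=N_q^{-1}\cdot[-x_n]^{\sharp}$. Passing to the limit through the continuous fractional-linear action and using the identity $N_q^{-1}\cdot[-x]^{\sharp}=[x]^{\flat}$ (itself a case of Theorem \ref{sharp_and_flat}), we conclude that both $[x_n]^{\sharp}$ and $[x_n]^{\flat}$ converge to $[x]^{\flat}$, which is the desired statement.
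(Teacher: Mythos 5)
The paper gives no proof of this proposition --- it is imported verbatim from \cite{MGOr} and \cite{BBL} with a citation --- so there is no in-house argument to compare yours against; I can only assess the proposal on its own terms. Your overall architecture is sound and is essentially the route the cited references take: the computation $[n]^{\sharp}\to \frac{1}{1-q}=[\infty]^{\flat}$ is correct, the bookkeeping identifying the right-hand side of $x=M\cdot\infty$ with prefix matrices of determinant $-1$ is correct (indeed $M\cdot y - M\cdot\infty \sim -\det M/(c^2y)$ as $y\to+\infty$), the transfer from $\sharp$ to $\flat$ via the vanishing of $[y]^{\sharp}-[y]^{\flat}$ is legitimate once one notes that the denominators of a strictly one-sided approximating sequence must diverge, and the deduction of (ii) from (i) by conjugating with $N_q$ and invoking Theorem \ref{sharp_and_flat} twice is clean and complete. (Two small points you should make explicit: the difference formula appearing before Corollary \ref{left_convergence} is derived in the paper for convergents of an \emph{irrational} number, so you must redo that two-line computation for an arbitrary rational $y$ with its own continued fraction; and each use of ``continuity of the fractional-linear action'' needs the observation that the relevant denominator, e.g.\ $C\,[\infty]^{\flat}+D$, is nonzero because its value at $q=1$ is the nonzero bottom-left entry of $M$.)

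The genuine gap is exactly where you flag it: the passage from the canonical sequences $M\cdot(R^{n}\cdot z_0)$ to an \emph{arbitrary} sequence $x_n\downarrow x$. You propose to ``invoke the stabilization of Taylor coefficients underlying Proposition \ref{right_convergence}'', but that proposition concerns sequences converging to an \emph{irrational} limit, and its conclusion (independence of the limit from the approximating sequence) is precisely the kind of statement you are trying to prove here for a rational limit; as written, the appeal is circular, or at best defers the entire content of part (i) to an unproved uniform estimate. What is actually needed is a quantitative lemma of the form: if two rationals $y,y'$ lie in the same Stern--Brocot (Farey) interval of depth $k$, then $[y]^{\sharp}\equiv[y']^{\sharp} \pmod{q^{N(k)}}$ with $N(k)\to\infty$ --- equivalently, a valuation bound on $M_q\cdot w - M_q\cdot w'$ in terms of the length of the common prefix $M$, obtained from the explicit shape of the matrices in Proposition \ref{prop_de_mgo} (nonnegative coefficients, controlled valuations of the column entries). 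Your phrase ``share a continued-fraction prefix deep enough'' is also literally false at, say, $x=0$, where the right-approaching rationals $1/(m+\cdots)$ share no deep continued-fraction prefix with $0$; the correct invariant is the Stern--Brocot address, not the continued-fraction expansion. Until this uniform stabilization lemma is stated and proved, part (i) --- and hence the whole proposal --- is incomplete.
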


\subsection{Injectivity of quantization}

We are proving Proposition \ref{injective} for the right quantization. The proof is the same in the left case. So let us consider the right quantization map
$$
\mathcal{Q}^{\sharp} : \RR \longrightarrow \ZZ[[q]][q^{-1}].
$$

On the rational numbers, it is injective, because one can evaluate at $q = 1$ to recover the rational number from its quantization.

Let $x$ and $x'$ be two irrational numbers such that $[x]_q = [x']_q$. The continued fractions of $x$ and $x'$ are infinite :
$$
x = [a_1,a_2,a_3,\cdots ] \text{ , } x' = [a_1',a_2',a_3',\cdots ].
$$

\noindent By Theorem 2 of \cite{MGOr}, we know that the integer part of a real number can be read directly on its $q$-deformation, so we get that $a_1 = a_1'$. \\
\noindent Then put 
$$
y = \frac{1}{x-a_1} = [a_2,a_3,\cdots ]  \text{ and  } y' = \frac{1}{x'-a_1} = [a_2',a_3', \cdots].
$$ 
\noindent We have $y = JR^{-a_1}\cdot x$ and $y' = JR^{-a_1}\cdot x'$,  so 
$$[y]_q = [JR^{-a_1}]_q \cdot [x]_q = [JR^{-a_1}]_q \cdot [x']_q = [y']_q.$$

\noindent Again $a_2 = a_2'$, and by induction for all $i$, $a_i = a_i'$, so $x = x'$. Proposition \ref{injective} is proved.

\section{Application to $q$-deformed algebraic numbers}

As an illustration of the extended quantized actions on $\PP^1(\ZZ(q))$, we quantize some algebraic equations, in a compatible way with the $q$-deformations of their roots (being thus algebraic real numbers).

\subsection{Degree 4}

We consider the following particular case of algebraic equation of degree $4$, with $b$ an integer :
\begin{equation}
\label{eq4}
x^4 - bx^2 + 1 = 0.
\end{equation}

\noindent We want to study the $q$-deformations of the solutions and the relations they share. Assuming there are four distinct real solution $x_1$, $x_2$, $x_3$ and $x_4$, we denote by $\sigma_1$, $\sigma_2$, $\sigma_3$ and $\sigma_4$ the elementary symmetric polynomials :
\begin{itemize}
\item $\sigma_1 = x_1 + x_2 + x_3 + x_4 = 0$;
\item $\sigma_2 = x_1x_2 + x_1x_3 + x_1x_4 + x_2x_3 + x_2x_4 + x_3x_4 = -b$;
\item $\sigma_3 = x_1x_2x_3 + x_1x_2x_4 + x_1x_3x_4 + x_2x_3x_4 = 0$ ;
\item $\sigma_4 = x_1x_2x_3x_4 = 1$.
\end{itemize}

\begin{nota}
For $i \in \{1,2,3,4\}$, the $q$-deformed number $[x_i]_q $ will be denoted $X_i$, and the elementary polynomials in the $X_i$'s will be denoted by $\Sigma_1$, $\Sigma_2$, $\Sigma_3$, $\Sigma_4$.
\end{nota}

\begin{lemme}
The equation \eqref{eq4} has four distinct real solutions if and only if $b > 2$. \\
\noindent Moreover, the polynomial $x^4 -bx^2 +1$ is irreducible over $\QQ$ except when $b = n^2 \pm 2$, for $n \in \NN$.
\end{lemme}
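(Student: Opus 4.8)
The plan is to treat the two assertions in turn, both resting on the biquadratic substitution $u = x^2$, under which \eqref{eq4} becomes $u^2 - bu + 1 = 0$ with roots $u_\pm = \tfrac{1}{2}\bigl(b \pm \sqrt{b^2-4}\bigr)$ obeying $u_+ u_- = 1$ and $u_+ + u_- = b$.

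For the number of real roots: the constant term of \eqref{eq4} is $1$, so $0$ is never a root, and the polynomial is even, so its real roots come in pairs $\pm x$. Hence \eqref{eq4} has four distinct real solutions precisely when $u_+$ and $u_-$ are real, distinct, and strictly positive. Since $u_+u_- = 1 > 0$ the two numbers always share a sign, so positivity is equivalent to $u_+ + u_- = b > 0$, while reality and distinctness amount to $b^2 - 4 > 0$; intersecting, I get $b > 2$. To make the equivalence sharp I would also dispatch the remaining cases: $b = 2$ gives $u_+ = u_- = 1$ and only the roots $x = \pm 1$; $b = -2$ gives $u_\pm = -1 < 0$ and no real root; and $|b| < 2$ gives non-real $u_\pm$, hence non-real $x$.

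For irreducibility: $P(x) = x^4 - bx^2 + 1$ is monic with integer coefficients, so by Gauss's lemma reducibility over $\QQ$ is equivalent to a nontrivial factorization into monic polynomials over $\ZZ$. As $P$ has no $x^3$- or $x$-term, such a factorization has one of two shapes: either (i) $P$ has a rational root, necessarily $\pm 1$ by the rational root theorem, which forces $2 - b = P(\pm 1) = 0$, i.e. $b = 2$; or (ii) $P = (x^2 + ax + c)(x^2 - ax + c')$ with $a, c, c' \in \ZZ$, the linear coefficients being opposite because the $x^3$-coefficient of $P$ vanishes. Expanding (ii) and matching coefficients gives $cc' = 1$, $a(c' - c) = 0$, and $c + c' - a^2 = -b$. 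The middle relation splits into two sub-cases: if $a = 0$, then $c$ and $c'$ are integer roots of $t^2 + bt + 1$, forcing $b^2 - 4$ to be a perfect square, and the only integer solutions of $b^2 - m^2 = 4$ give $b = \pm 2$; if instead $c = c'$, then $cc' = 1$ gives $c = c' = \varepsilon \in \{1,-1\}$, and the last relation reads $b = a^2 - 2\varepsilon$, that is $b = n^2 - 2$ or $b = n^2 + 2$ with $n = |a| \in \NN$. Conversely, the identities $x^4 - (n^2-2)x^2 + 1 = (x^2 - nx + 1)(x^2 + nx + 1)$ and $x^4 - (n^2+2)x^2 + 1 = (x^2 - nx - 1)(x^2 + nx - 1)$ (both instances of a difference of squares) exhibit the factorizations, and since $2 = 0^2 + 2$ and $-2 = 0^2 - 2$ the values $\pm 2$ arising from cases (i) and $a = 0$ are already of the form $n^2 \pm 2$. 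Thus $P$ is reducible over $\QQ$ if and only if $b = n^2 \pm 2$ for some $n \in \NN$.

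The step I expect to need the most care is (ii): I must be sure that shapes (i) and (ii) are exhaustive and keep the sign bookkeeping in the sub-cases straight so that no admissible value of $b$ is lost or double-counted. Beyond that, everything reduces to elementary arithmetic with quadratics.
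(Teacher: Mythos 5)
Your proof is correct and complete; the paper actually states this lemma without proof, so there is no argument to compare against, but the route you take (the biquadratic substitution $u = x^2$ for counting real roots, and Gauss's lemma plus the two factorization shapes over $\ZZ$ for irreducibility) is the natural one and all the case analysis checks out, including the converse factorizations $x^4-(n^2\mp 2)x^2+1=(x^2\pm 1)^2-n^2x^2$ and the observation that $b=\pm 2$ is subsumed under $n=0$.
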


\begin{prop}
Let us suppose that $b > 2$ in equation \eqref{eq4}. Then the $q$-deformed numbers $X_1$, $X_2$, $X_3$ and $X_4$ are solutions of the following quantized equation
$$X^4 - \Sigma_1X^3 + \left(\frac{q+q^{-1}-3}{q-1}\Sigma_1 + 2q^{-1}\right)X^2 + q^{-1}\Sigma_1X + q^{-2} = 0,$$
\noindent where $\Sigma_1 = X_1 + X_2 + X_3 + X_4$ is a Laurent series in $q$.\\
\noindent In other words, the Vieta's relations become
\begin{itemize}
\item $\Sigma_4 = q^{-2}$, 
\item $\Sigma_3 = -q^{-1}\Sigma_1$, 
\item $(q-1)\Sigma_2 = (q + q^{-1} - 3)\Sigma_1 + 2(1 - q^{-1}).$
\end{itemize}
\end{prop}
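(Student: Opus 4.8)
The plan is to realise the four roots of \eqref{eq4} as a single orbit under a Klein four-subgroup of $\PGL_2(\ZZ)$, transport this picture to $\PP^1(\ZZ(q))$ by the equivariance of Theorem \ref{sharp_and_flat}, and then read off the $q$-deformed Vieta relations. Since $x^4-bx^2+1$ is both even and palindromic, whenever $\alpha$ is a root so are $-\alpha$, $1/\alpha$ and $-1/\alpha$; by the preceding lemma these four numbers are pairwise distinct exactly when $b>2$. As $\Sigma_1,\dots,\Sigma_4$ are symmetric functions of the $X_i$, I may relabel so that $x_2=-x_1$, $x_3=1/x_1$, $x_4=-1/x_1$, i.e. $x_2=N\cdot x_1$, $x_3=J\cdot x_1$, $x_4=NJ\cdot x_1$, where $J=NS$ and $\det N=\det J=-1$.

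Writing $X:=X_1=[x_1]_q$, I would apply Theorem \ref{sharp_and_flat} (its statement for irrational numbers) to the determinant-$(-1)$ matrices $N$ and $J$, and compose, to obtain $X_2=N_q\cdot X=\frac{-X+1-q^{-1}}{(q-1)X+1}$, $X_3=J_q\cdot X=\frac{(q-1)X+1}{qX+1-q}$ and $X_4=N_qJ_q\cdot X=-q^{-1}/X$ (using $N_qJ_q=tS_q$ in $\PGL_2(\Lambda)$). A one-line simplification then gives the two pairing relations $X_1X_4=-q^{-1}$ and $X_2X_3=-q^{-1}$. Two of the three Vieta identities now drop out: $\Sigma_4=(X_1X_4)(X_2X_3)=q^{-2}$, and since each $X_i^{-1}$ equals $(-q)$ times its pairing partner one has $\sum_iX_i^{-1}=-q\,\Sigma_1$, hence $\Sigma_3=\Sigma_4\sum_iX_i^{-1}=-q^{-1}\Sigma_1$.

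The identity for $\Sigma_2$ is the part that takes work. Grouping the six products by complementary pairs, $\Sigma_2=X_1X_4+X_2X_3+(X_1+X_4)(X_2+X_3)=-2q^{-1}+PQ$ with $P:=X_1+X_4$ and $Q:=X_2+X_3$, so that also $\Sigma_1=P+Q$; the point is to express $PQ$ as a function of $P+Q$. The pairing relations give $P=g(X)$ and $Q=g(X_2)=g(N_q\cdot X)$, where $g(Z):=Z-q^{-1}/Z$ is the degree-two map realising $\PP^1$ as the quotient by the involution $S_q$: one has $g\circ S_q=g$, and $g$ sends any pair $\{Z,Z'\}$ with $ZZ'=-q^{-1}$ to $Z+Z'$. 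Because $N_q$ and $S_q$ commute in $\PGL_2(\Lambda)$ (the image of the relation $NS=SN$ of $\PGL_2(\ZZ)$), the map $N_q$ descends through $g$ to a \emph{Möbius involution} $h$ with $g\circ N_q=h\circ g$; hence $Q=h(P)$. For a Möbius involution the unordered pair $\{P,h(P)\}$ is harmonic with respect to the fixed-point pair $\{p_1,p_2\}$ of $h$, a condition that is \emph{linear} in the coefficients of $(T-P)(T-Q)$ and yields $PQ=\tfrac12(p_1+p_2)\,\Sigma_1-p_1p_2$. Finally, the fixed points of $h$ are the $g$-images of the fixed-point pairs of $N_q$ and of $S_qN_q$; since $S_q$ swaps each of those pairs, $g$ collapses each to its sum, read off the corresponding fixed-point quadratic: $p_1=-\tfrac{2}{q-1}$ and $p_2=\tfrac{2(q-1)}{q}$. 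Substituting, $\tfrac12(p_1+p_2)=\tfrac{q+q^{-1}-3}{q-1}$ and $-p_1p_2=4q^{-1}$, so $\Sigma_2=PQ-2q^{-1}=\tfrac{q+q^{-1}-3}{q-1}\Sigma_1+2q^{-1}$; then $\prod_i(T-X_i)=T^4-\Sigma_1T^3+\Sigma_2T^2-\Sigma_3T+\Sigma_4$ is precisely the claimed quantized equation.

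The main obstacle is exactly this $\Sigma_2$ computation. One can always proceed by brute force — substitute the rational expressions for $X_2,X_3,X_4$ into $\Sigma_1$ and $\Sigma_2$, clear the common denominator $X\bigl((q-1)X+1\bigr)(qX+1-q)$, and verify a polynomial identity in $X$ over $\overline{\Lambda}$ — but that is bulky. The harmonic-pair route is cleaner and explains a priori why $\Sigma_2$ is affine in $\Sigma_1$ (just as $\Sigma_3$ is linear in $\Sigma_1$ because it is slaved to $\Sigma_4=q^{-2}$), reducing the content to locating two points. One technical caveat: writing $X_i=[x_i]_q$ presumes the roots are irrational, so that left and right quantizations coincide and Theorem \ref{sharp_and_flat} applies uniformly; this holds generically, e.g. whenever $x^4-bx^2+1$ is irreducible over $\QQ$.
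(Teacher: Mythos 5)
Your proposal is correct, and the overall framework is the same as the paper's: realise the four roots as one orbit of the Klein four-group $\{1,N,J,S\}$, transport this to the $X_i$ via the quantized action (Theorem \ref{sharp_and_flat} for $N,J$ and the $\PSL_{2,q}$-equivariance for $S$), and read off the relations $X_1X_4=X_2X_3=-q^{-1}$, from which $\Sigma_4=q^{-2}$ and $\Sigma_3=-q^{-1}\Sigma_1$ follow exactly as you say. Where you genuinely diverge is the $\Sigma_2$ identity. The paper also extracts the explicit product relations coming from the $J_q$- and $N_q$-pairings, namely $qX_1X_2=(q-1)(X_1+X_2)+1$ and $(q-1)X_1X_4=-X_1-X_4+1-q^{-1}$ (and their partners), and obtains $(q-1)\Sigma_2$ by brute-force summation of all six pairwise products. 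You instead use only the $S_q$-pairing, push $N_q$ down through the degree-two quotient $g(Z)=Z-q^{-1}/Z$ to a M\"obius involution $h$ with $Q=h(P)$, and invoke the graph equation $cPQ-a(P+Q)-b=0$ of an involution (equivalently, harmonicity with respect to its fixed points) to get $PQ$ affine in $P+Q$; your fixed-point sums $p_1=-2/(q-1)$, $p_2=2(q-1)/q$ and the resulting coefficients check out against the stated formula. Your route explains \emph{a priori} why $\Sigma_2$ is affine in $\Sigma_1$ and avoids ever writing down the $J_q$- and $N_q$-product relations, at the cost of computing two fixed-point quadratics and the descent lemma; the paper's route is shorter to verify but purely computational. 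Two small points: $N_qJ_q=N_q^2S_q=tq^{-1}S_q$ rather than $tS_q$ (immaterial in $\PGL_2(\Lambda)$); and your closing caveat is not needed as a restriction, since for $b>2$ a rational root $p$ of $x^4-bx^2+1$ would force $p^2\mid 1$ and hence $b=2$, so all roots are irrational in every case, including the reducible ones $b=n^2\pm2$ where they are quadratic irrationals.
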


\begin{proof}
\noindent As the equation \eqref{eq4} is invariant under the two transformations $x\mapsto -x$ and $x\mapsto 1/x$, the matrices $N$ and $J$ act on the roots.

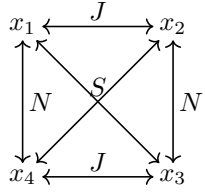
\begin{figure}[H]
\label{galois4}
\centering
\begin{tikzpicture}[auto,node distance=2cm,on grid,semithick,inner sep=2pt,bend angle=45]

\node (X1) {$x_1$} ;
\node (X2) [right =of X1] {$x_2$};
\node (X4) [below =of X1] {$x_4$};
\node (X3) [below =of X2] {$x_3$};

\path
(X1) edge[<->] node {$J$} (X2)
	edge [<->] node {$N$} (X4)
	edge [<->] (X3)
(X4) edge [<->] node {$J$} (X3)
	edge [<->,above] node {$S$} (X2)
(X2) edge [<->] node {$N$} (X3);
\end{tikzpicture}
\caption{Galois group action on the roots by linear fractional transformations}
\end{figure}

\noindent After quantization, $[J]_q$ and $[N]_q$ also act on the $X_i$'s, and then 
$$\begin{pmatrix}
	-1 & 1-q^{-1}\\
	q-1 & 1\\
	\end{pmatrix} X_4 = X_1 \text{ so } (q-1)X_1X_4 = -X_1 -X_4 + 1-q^{-1},$$
\noindent and same for $X_2$ and $X_3$ : $(q-1)X_2X_3 = -X_2 -X_3 + 1-q^{-1}$. 
\noindent Letting $[J]_q$ and $[S]_q$ act, we get four more equations :
$$qX_1X_2 = (q-1)(X_1 + X_2) + 1 \text{ , }  qX_3X_4 = (q-1)(X_3+X_4) + 1 \text{ , } qX_1X_3 = -1 \text{ and } qX_2X_4 = -1.$$

\noindent Using these equations, we get the quantized Vieta's relations. For instance, 
\begin{align*}
(q-1)\Sigma_2 &= (q-1)(X_1X_2 + X_3X_4 + X_1X_4 + X_2X_3 + X_2X_4 + X_1X_3)\\
&= q^{-1}(q-1)^2(X_1+X_2+X_3+X_4) + 2q^{-1}(q-1) + (-X_1-X_2-X_3-X_4 +2(1-q^{-1})) -2q^{-1}(q-1)\\
&= (q-2+q^{-1})\Sigma_1 - \Sigma_1 + 2(1-q^{-1})\\
&= (q-3 + q^{-1})\Sigma_1 + 2(1-q^{-1}),
\end{align*}
\noindent and similarly for the other Vieta's relations.
\end{proof}

\begin{rem} 
The Galois group of the polynomial in equation \eqref{eq4} is isomorphic to the group $\{\Id, J,S,N\}$ which is the Klein four group, acting on the roots $x_1,x_2,x_3,x_4$ exactly as shown in Figure \ref{galois4}. \\
\noindent Indeed, if $\sigma \in \Gal_{\QQ}(x^4-bx^2+1)$, then it is determined by the image of one root $x_1$ of $x^4-bx^2+1$ because all the roots are algebraic expressions of $x_1$.
\end{rem}

\begin{ex}
Let us consider more specifically the case $b = n^2 - 2$, for $n \geq 3$. Then the polynomial splits in two quadratic polynomials
$$x^4 -(n^2 - 2)x^2 + 1 = (x^2 -nx + 1)(x^2 +nx + 1).$$
\noindent The solutions $X_1,X_2,X_3,X_4$ are then quadratic irrational numbers. Following \cite{Leclere_modular}, these quantized numbers are solutions in radicals of quantized quadratic equations.
For instance, let us detail the way $x^2 -nx + 1$ is quantized. The two roots $x_1, x_2$ of this polynomial satisfy $x_i^2 = nx_i -1$, so they are fixed points of the matrix $M = \begin{pmatrix}
		n & -1\\
		1 & 0\\
	\end{pmatrix} \in \PSL_2(\ZZ)$. In terms of the two generators $R$ and $S$ of the modular group (see \eqref{generators}), the matrix $M$ is $R^nS$. Then the $q$-deformed numbers $X_1$, $X_2$ are solutions of a quadratic equation given by the equality $M_q \cdot X_i = X_i$, where 
$$
M_q = R_q^n S_q = \begin{pmatrix}
					[n]^{\sharp} & -q^{n-1}\\
					1 & 0\\
					\end{pmatrix}.
$$	
\noindent That is, the equation $x^2 -nx + 1 = 0$ is quantized by 	 
$$
X^2 - [n]^{\sharp}X + q^{n-1} = 0.
$$
\noindent Likewise, the equation $x^2 +nx +1$ is quantized by 
$$
X^2 - [-n]^{\sharp}X + q^{-n-1} = 0.
$$

\noindent Therefore, the Vieta's formulas become :
$$
 \Sigma_4 = q^{-2} \text{ , } \Sigma_3 = q^{n-1}[-n]^{\sharp} + q^{-n-1}[n]^{\sharp},
$$
$$ \Sigma_2 = [n]^{\sharp}[-n]^{\sharp} + q^{-n-1}+q^{n-1} \text{ and } \Sigma_1 = [n]^{\sharp} + [-n]^{\sharp}.
$$

\noindent The case $b = n^2 + 2$ is similar, except that here we will get a matrix with determinant $-1$. Indeed, the decomposition is 
$$
x^4 - (n^2 + 2)x^2 + 1 = (x^2 -nx -1)(x^2 +nx-1),
$$
and the roots $x_1$, $x_2$ of $x^2 -nx -1$ are fixed points of the matrix $\begin{pmatrix}
						n & 1 \\
						1 & 0\\
						\end{pmatrix}$, which is $NR^{-n}S$. Its quantization is 
$$
\begin{pmatrix}
[n]^{\flat} & q^{-1} \\
1 & q^{-1}-1\\
\end{pmatrix}.
$$

\noindent Therefore the quantized equations are 
$$
X^2 + (q^{-1}-1 - [n]^{\flat})X - q^{-1} = 0 \text{ and } X^2 + (q^{-1}-1 - [-n]^{\flat})X - q^{-1} = 0, 
$$
\noindent and the Vieta's relations for the equation $x^4 - (n^2 +2)x^2 +1 = 0$ become
$$
\Sigma_4 = q^{-2} \text{ , } \Sigma_3 =  2(q^{-2} - q^{-1}) -q^{-1}([n]^{\flat} + [-n]^{\flat}) ,
$$
$$ \Sigma_2 = [n]^{\flat}[-n]^{\flat} -(q^{-1}-1)([n]^{\flat}+ [-n]^{\flat})+ 1 - 4q^{-1} + q^{-2} \text{ and } \Sigma_1 = 2(1-q^{-1}) + [n]^{\flat} + [-n]^{\flat}.
$$
\end{ex}

\subsection{Degree 6}

We consider the following equation, depending in one integer parameter $b$ :
\begin{equation}
\label{eq6}
x^6 - 3x^5 - bx^4 + (2b + 5)x^3 - bx^2 - 3x + 1 = 0.
\end{equation}

We keep the same notations as in the case of degree $4$, the quantized solutions are denoted $X_i$ ($1\leq i \leq 6 $) and the symmetric polynomials $\Sigma_i$'s. 

\begin{lemme} The equation \eqref{eq6} has six distinct real solutions if and only if $b \geq 1$. In this case, it is irreducible over $\QQ$ if and only if $b \neq 2$ and $b \notin \{k^2+k+1~|~k\geq 0\}$. 
\end{lemme}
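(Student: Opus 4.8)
The plan is to study the sextic $P_b(x) = x^6 - 3x^5 - bx^4 + (2b+5)x^3 - bx^2 - 3x + 1$ by exploiting its palindromic symmetry: the coefficient sequence $(1,-3,-b,2b+5,-b,-3,1)$ is palindromic, so $P_b$ is invariant under $x \mapsto 1/x$. First I would perform the standard substitution $y = x + 1/x$, dividing $P_b(x)$ by $x^3$ to get $x^3 + 1/x^3 - 3(x^2 + 1/x^2) - b(x + 1/x) + (2b+5)$, and rewrite this in terms of $y$ using $x+1/x = y$, $x^2+1/x^2 = y^2-2$, $x^3+1/x^3 = y^3-3y$. This yields a cubic $Q_b(y) = y^3 - 3y^2 - (b+3)y + (2b+11)$ (I would recompute the constant carefully), with the property that the six roots of $P_b$ come in pairs $\{x, 1/x\}$, each pair corresponding to one root $y$ of $Q_b$ via $x^2 - yx + 1 = 0$.

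For the first assertion (six distinct real roots iff $b \geq 1$): each root $y$ of $Q_b$ gives two real values of $x$ iff $y^2 \geq 4$, i.e. $y \geq 2$ or $y \leq -2$ (and two distinct ones iff $y^2 > 4$, with $y = \pm 2$ giving the double root $x = \pm 1$). So $P_b$ has six distinct real roots iff $Q_b$ has three real roots all lying in $(-\infty,-2) \cup (2,+\infty)$, none equal to $\pm 2$, and all distinct. I would analyze $Q_b$ by evaluating it at the critical abscissae $y = \pm 2$: compute $Q_b(2)$ and $Q_b(-2)$ as affine functions of $b$, check the sign of the leading term and behaviour at $\pm\infty$, and use the intermediate value theorem together with a discriminant or monotonicity argument to pin down exactly when all three roots fall in the required region. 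The threshold $b = 1$ should emerge as the value where one of $Q_b(\pm 2)$ changes sign (equivalently where a root of $Q_b$ crosses $\pm 2$, i.e. where $P_b$ acquires a root $x = \pm 1$ or a complex pair); for $b \geq 1$ one checks all three roots are real and outside $[-2,2]$, for $b < 1$ one of these conditions fails.

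For the irreducibility assertion: a factorization of $P_b$ over $\QQ$ respects the $x \mapsto 1/x$ symmetry, so it corresponds to a factorization of the cubic $Q_b$ over $\QQ$ (a degree-$2$ palindromic factor $x^2 - cx + 1$ of $P_b$ corresponds to a rational root $c$ of $Q_b$; a degree-$1$ or degree-$3$ factor would force $\pm 1$ to be a root of $P_b$, handled separately). Hence $P_b$ is reducible over $\QQ$ essentially iff $Q_b$ has a rational root, which by the rational root theorem must be an integer dividing the constant term $2b+11$; but one can do better by noting $y$ rational forces $y = x + 1/x$ with $x$ algebraic of degree $\leq 2$, and solving: I would set $y = k$ an integer root of $Q_b$, solve $Q_b(k) = 0$ for $b$ to get $b$ as a function of $k$, and simplify. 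This should produce the family $b \in \{k^2 + k + 1 : k \geq 0\}$ (the reparametrization coming from which integer $y$ is a root, after translating the root of $y^3 - 3y^2 - \cdots$), plus the sporadic exception $b = 2$, which I expect corresponds to $P_2$ having $x = \pm 1$ as a root (check $P_2(1)$ and $P_2(-1)$) or to $Q_2$ factoring with a root at $y = \pm 2$. Finally I would verify that outside these values $Q_b$ is irreducible over $\QQ$ (no integer root), hence $P_b$ is irreducible.

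\medskip

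\noindent\textbf{Expected main obstacle.} The delicate point is the irreducibility direction: ruling out \emph{all} rational factorizations of $P_b$, not just the symmetric ones. I must argue that any monic factor in $\ZZ[x]$ can be taken $x \mapsto 1/x$-stable (up to pairing a factor with its reciprocal), reduce cleanly to rational roots of the cubic $Q_b$, and then match the resulting one-parameter family of "bad" $b$ with the stated set $\{k^2+k+1\} \cup \{2\}$ — including checking that the exceptional value $b = 2$ is genuinely exceptional and not already of the form $k^2+k+1$ (indeed $k^2+k+1 \in \{1,3,7,13,\dots\}$ misses $2$), and that the boundary case $b = 1 = 1^2+0\cdot\text{?}$ is consistent. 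Getting the exact arithmetic of the substitution $y = x+1/x$ right (so that the family reads $k^2+k+1$ and not some shifted variant) is where care is needed; everything else is routine real-root counting.
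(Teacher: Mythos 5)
Your reduction of the sextic to the cubic $Q_b(y)=y^3-3y^2-(b+3)y+2b+11$ via $y=x+1/x$ is sound for the first assertion: one finds $Q_b(2)=1$, $Q_b(-2)=4b-3$, and after the shift $y=z+1$ the cubic becomes $z^3-(b+6)z+(b+6)$ with discriminant $(b+6)^2(4b-3)$, from which the count of distinct real roots for integer $b$ follows by elementary sign analysis. This is a legitimate alternative to the paper, which instead invokes the order-three symmetry $\Gamma\colon x\mapsto 1-1/x$ to see that the six roots form a single orbit (hence are all real or all non-real) and leaves the threshold $b\geq 1$ as a direct check.

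The irreducibility part, however, has a genuine gap. You reduce reducibility of $P_b$ to $Q_b$ having a rational root, dismissing cubic factors with the claim that a degree-$1$ or degree-$3$ factor would force $\pm 1$ to be a root of $P_b$. That claim is only true for a \emph{self-reciprocal} cubic factor; an irreducible cubic factor $g$ whose reciprocal $\tilde g$ is a \emph{different} cubic gives the reciprocal-stable factorization $P_b=g\,\tilde g$ with no root at $\pm 1$, and this is precisely where the family $b=k^2+k+1$ comes from. For example, $b=3$ gives $P_3=(x^3-3x^2+1)(x^3-3x+1)$, a product of two mutually reciprocal cubics, while $Q_3(y)=y^3-3y^2-6y+17$ has no rational root (the candidates $\pm 1,\pm 17$ all fail); so your criterion would wrongly declare $P_3$ irreducible. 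Indeed, a rational-root computation on $z^3-(b+6)z+(b+6)$ shows that for $b\geq 1$ the cubic $Q_b$ has a rational root only at $b=2$, so your mechanism detects only the sporadic case and cannot produce the family $\{k^2+k+1\}$ at all. The missing ingredient is the second symmetry $\Gamma$: the paper uses the fact that the roots are $x_1,\Gamma x_1,\Gamma^2 x_1$ and their inverses to see that a splitting into cubics must be $\prod_i\bigl(x-\Gamma^i x_1\bigr)$ times its reciprocal, writes these factors as $x^3-(3+a)x^2+ax+1$ and $x^3+ax^2-(3+a)x+1$ with $a\in\ZZ$, and matches coefficients to get $a^2+3a+3=b$, i.e.\ $b=k^2+k+1$ with $k=a+1$. (Equivalently, in your framework the two-cubic splitting is governed not by a rational root of $Q_b$ but by its discriminant $(b+6)^2(4b-3)$ being a square.) The three-quadratics case needs the same care: at $b=2$ the non-palindromic factors $x^2\pm x-1$ pair into the reciprocal-stable quartic $x^4-3x^2+1$, which is not seen as rational roots of $Q_2$; only the palindromic factor $x^2-3x+1$ corresponds to the rational root $y=3$. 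An argument going beyond rational roots of $Q_b$ is therefore indispensable.
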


\begin{proof}
Set $f(x) = x^6 - 3x^5 - bx^4 + (2b + 5)x^3 - bx^2 - 3x + 1$. \\
\noindent This polynomial is invariant under the action of $J$ and of $\Gamma := \begin{pmatrix}
				1 & -1 \\
				1 & 0\\
\end{pmatrix}$. So the set of roots is also stable by these transformations, and the roots are $x_1, \Gamma(x_1),\Gamma^2(x_1), 1/x_1, 1/\Gamma(x_1), 1/\Gamma^2(x_1)$. Therefore either all the roots are real, either all the roots are nonreal complex numbers. Then it is straightforward to check that the roots are real if and only if $b\geq 1$. \\
~\\
\noindent Now let us suppose that $b\geq 1$. If the polynomial $f$ is not irreducible over $\QQ$, it has either two factors of degree $3$ or three factors of degree $2$. \\
\noindent $\bullet$ Let us suppose that $f$ splits into two factors of degree $3$. Then these two factors must be 
$$
(x-x_1)(x-\Gamma(x_1))(x-\Gamma^2(x_1)) \text{ and } \left(x-\frac{1}{x_1}\right)\left(x-\frac{1}{\Gamma(x_1)}\right)\left(x-\frac{1}{\Gamma^2(x_1)}\right).
$$
\noindent So there is an integer $a$ such that $x_1\Gamma(x_1) + x_1\Gamma^2(x_1) + \Gamma(x_1)\Gamma^2(x_1) = a$, that is
$$
x_1^3 - (3+a)x_1^2 + ax_1 + 1 = 0,
$$
\noindent so the polynomial $f$ splits in two factors that are exaclty the one considered in \cite{Ovsienko_Ustinov},
$$
f(x) = (x^3 - (3+a)x^2 + ax + 1)(x^3 +ax^2 - (3+a)x + 1)~~~~ (\star).
$$
\noindent Identifying coefficient by coefficient, we get that $a$ must satisfy $a^2 + 3a + 3 -b = 0$, therefore $4b-3$ must be a square integer, that is $b = k^2 + k+1$, for some $k\in \NN$.\\
\noindent Conversely, if $b = k^2 +k+1$ for a $k\in \NN$, then the factorization $(\star)$ works, with $a = k-1$.\\
~\\
\noindent $\bullet$ Let us suppose that $f$ splits into three factors of degree $2$. Then one can check that the only possible configuration is when $b = 2$ and 
$$
f(x) = (x^2 -3x + 1)(x^2 -x-1)(x^2 +x-1).
$$
\noindent Thus $f$ is reducible over $\QQ$ if and only if $b = 2$ or $b = k^2+k+1$ for some $k\in \NN$.
\end{proof}

\begin{prop}
\label{Vieta6}
Let us suppose that $b \geq 1$. After the $q$-deformation of the six roots of \eqref{eq6}, the Vieta's relations are quantized as 
\begin{itemize}
\item $\Sigma_6 = 1$,
\item $\Sigma_5 = -\Sigma_1 + 6$,
\item $\Sigma_4 = -5\Sigma_1 + \Sigma_2 + 15$,
\item $\Sigma_3 = -5\Sigma_1 + 2\Sigma_2 + 10$, 
\item $(q-1)\Sigma_2 = (q^2 +q -4 +q^{-1})\Sigma_1 + 3(-q^2 +q +2 -q^{-1})$.
\end{itemize}

\end{prop}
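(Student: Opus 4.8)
The plan is to quantize the group of symmetries of \eqref{eq6} and translate the classical Vieta relations one at a time, in the spirit of the degree $4$ case. As noted in the proof of the previous lemma, $f(x)=x^6-3x^5-bx^4+(2b+5)x^3-bx^2-3x+1$ is invariant under $J\colon x\mapsto 1/x$, with matrix $J=\begin{pmatrix}0&1\\1&0\end{pmatrix}$ of determinant $-1$, and under $\Gamma\colon x\mapsto 1-1/x$, with matrix $\Gamma=RS=\begin{pmatrix}1&-1\\1&0\end{pmatrix}$ of determinant $1$ and order $3$ in $\PGL_2(\ZZ)$. Together they generate a copy of $S_3$ which, by the lemma, acts simply transitively on the six (real, for $b\geq 1$) roots. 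We relabel the roots as two $\Gamma$-orbits $\{x_1,x_2,x_3\}$ and $\{x_4,x_5,x_6\}$, cyclically permuted by $\Gamma$ within each triangle, and matched pairwise by $J$; for a suitable choice of labels $x_{i+3}=1/x_i$.

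Next we quantize the action. Since $\Gamma\in\PSL_2(\ZZ)$, the equivariance of quantization under $\PSL_{2,q}(\ZZ)$ on $\PP^1(\QQ)$ together with Proposition \ref{right_convergence} gives $\Gamma_q\cdot[x]_q=[\Gamma x]_q$ for irrational $x$; since $\det(J)=-1$, the second part of Theorem \ref{sharp_and_flat} gives $J_q\cdot[x]_q=[Jx]_q$. Computing $\Gamma_q=R_qS_q=\begin{pmatrix}1&-1\\1&0\end{pmatrix}$ and $J_q=N_qS_q=\begin{pmatrix}q-1&1\\q&1-q\end{pmatrix}$ and writing $X_i=[x_i]_q$, we obtain the bilinear relations
\begin{align*}
X_iX_{i+1}&=X_i-1\qquad(\text{cyclically, within each triangle}),\\
qX_iX_{i+3}&=(q-1)(X_i+X_{i+3})+1\qquad(i=1,2,3).
\end{align*}

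The $\Gamma$-relations already give four of the five identities. Within one triangle, multiplying the three relations $X_iX_{i+1}=X_i-1$ yields $X_1X_2X_3=-1$, and adding them yields $X_1X_2+X_1X_3+X_2X_3=E_1-3$, where $E_1:=X_1+X_2+X_3$. Hence the quantized polynomial $F(X):=\prod_{i=1}^{6}(X-X_i)$ factors as $F=GH$ with $G(X)=X^3-E_1X^2+(E_1-3)X+1$ and $H(X)=X^3-E_1'X^2+(E_1'-3)X+1$, where $E_1'=X_4+X_5+X_6$ and $\Sigma_1=E_1+E_1'$. Expanding $GH$ and using $\Sigma_2=(E_1-3)+(E_1'-3)+E_1E_1'$, i.e. $E_1E_1'=\Sigma_2-\Sigma_1+6$, one reads off $\Sigma_6=1$, $\Sigma_5=6-\Sigma_1$, $\Sigma_4=\Sigma_2-5\Sigma_1+15$ and $\Sigma_3=2\Sigma_2-5\Sigma_1+10$; these look ``classical'' precisely because $\Gamma_q=\Gamma$.

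The remaining relation, for $\Sigma_2$, is the genuinely $q$-deformed one and requires the $J$-relations, which couple the two triangles; this is where I expect the real work to be. Summing the second family of relations over $i=1,2,3$ gives $X_1X_4+X_2X_5+X_3X_6=\frac{(q-1)\Sigma_1+3}{q}$. On the other hand $J_q(X)=\frac{q-1}{q}+\frac{t}{q(qX-q+1)}$, so with $\alpha=\frac{q-1}{q}$ and the logarithmic-derivative identity $\sum_{i=1}^{3}\frac{1}{X_i-\alpha}=-\frac{G'(\alpha)}{G(\alpha)}$ we compute $E_1'=3\alpha-\frac{t}{q^2}\frac{G'(\alpha)}{G(\alpha)}$, a fractional-linear function of $E_1$. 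Substituting $E_1'=\Sigma_1-E_1$ and clearing the denominator $G(\alpha)$ (affine in $E_1$) produces a quadratic $aE_1^2+bE_1+c=0$ with $a,b,c\in\ZZ(q)[\Sigma_1]$. Because $J_q$ interchanges the two triangles and $\Sigma_1$ is invariant under this interchange, running the same computation with $G$ replaced by $H$ shows that $E_1'$ satisfies the \emph{same} quadratic; hence $E_1$ and $E_1'$ are its two roots, so $E_1E_1'=c/a$ is a rational function of $\Sigma_1$ (and $E_1+E_1'=-b/a=\Sigma_1$ serves as a consistency check). Then $\Sigma_2=\Sigma_1-6+c/a$, which after simplification is $(q-1)\Sigma_2=(q^2+q-4+q^{-1})\Sigma_1+3(-q^2+q+2-q^{-1})$. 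The delicate points are the conceptual one just used — that $E_1,E_1'$ are conjugate over $\ZZ(q)(\Sigma_1)$ in a degree-$2$ extension, so that the non-symmetric product $E_1E_1'$ collapses to a function of $\Sigma_1$ — together with checking non-degeneracy ($a\not\equiv 0$) and that the relabelling of the roots is genuinely compatible with the cyclic $\Gamma$-action and the pairing $J$.
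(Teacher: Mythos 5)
Your proposal is correct in substance and shares the paper's starting point --- quantize the symmetries $J$ and $\Gamma$ of \eqref{eq6} and convert them into bilinear relations among the $X_i$ --- but the execution genuinely differs. The paper simply lists twelve relations $g_q\cdot X_i=X_j$ (for $g\in\{J,\Gamma,\Gamma^2,J\Gamma\}$) and obtains each Vieta identity by summing or multiplying over orbits of pairs; in particular $\Sigma_2$ is a straight sum of all fifteen products once one also quantizes the remaining mixed involution $\Gamma J\colon x\mapsto 1-x$ as $R_qN_q=\left(\begin{smallmatrix}-1&q\\ q-1&1\end{smallmatrix}\right)$, which gives $(q-1)X_iX_j=-(X_i+X_j)+q$ for the three cross pairs not reached by $J$ and $J\Gamma$. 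Your route instead exploits the pleasant fact that $\Gamma_q=\Gamma$ to factor $F=GH$, which yields $\Sigma_3,\dots,\Sigma_6$ in terms of $\Sigma_1,\Sigma_2$ more transparently than the paper, and then computes $E_1E_1'$ by a resolvent-quadratic argument using only $J_q$. That argument is sound but needs two repairs that you only flag: (i) the labelling $x_{i+3}=1/x_i$ forces $\Gamma$ to rotate the second triangle in the opposite cyclic direction (since $J\Gamma J=\Gamma^{-1}$), which is harmless because you only use symmetric functions of each triangle; (ii) the step ``$E_1,E_1'$ are the two roots of the quadratic, hence $E_1E_1'=c/a$'' breaks down if $E_1=E_1'$, but you can bypass it by writing the M\"obius relation as $(cE_1+d)E_1'=aE_1+b$, adding its mirror image $(cE_1'+d)E_1=aE_1'+b$, and solving $2cE_1E_1'+d\Sigma_1=a\Sigma_1+2b$ for $E_1E_1'$ directly (here $c\neq0$ because the $E_1$-coefficient of $G(\alpha)$ is $\alpha(1-\alpha)=(q-1)/q^2\neq0$). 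A further small slip: multiplying the three relations $X_iX_{i+1}=X_i-1$ gives $(X_1X_2X_3)^2=X_1X_2X_3+2$, hence $X_1X_2X_3\in\{-1,2\}$; it is cleaner to compute $X_3=\Gamma_q^2\cdot X_1=-1/(X_1-1)$ and multiply out, which gives $-1$ outright. With these repairs your $E_1E_1'$ equals $\Sigma_2-\Sigma_1+6$ and does reproduce $(q-1)\Sigma_2=(q^2+q-4+q^{-1})\Sigma_1+3(-q^2+q+2-q^{-1})$; I have checked that both computations give $(q-1)E_1E_1'=(q^2-3+q^{-1})\Sigma_1-3(q^2-3q+q^{-1})$. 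Overall your method buys a cleaner derivation of four of the five relations at the cost of a more delicate argument for the fifth, where the paper's orbit-by-orbit summation is more elementary.
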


\begin{proof}
We use the symmetries of \eqref{eq6}, which is invariant under the action of $J$ and $\Gamma$.

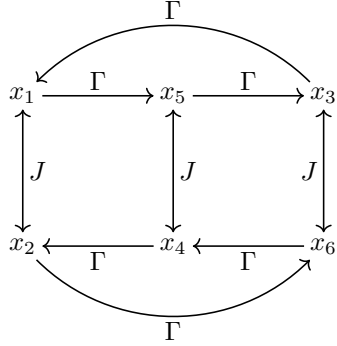
\begin{figure}[H]
\label{galois6}
\centering
\begin{tikzpicture}[auto,node distance=2cm,on grid,semithick,inner sep=2pt,bend angle=45]

\node (X1) {$x_1$} ;
\node (X5) [right =of X1] {$x_5$};
\node (X3) [right =of X5] {$x_3$};
\node (X2) [below =of X1] {$x_2$};
\node (X4) [right =of X2] {$x_4$};
\node (X6) [right =of X4] {$x_6$};

\path
(X1) edge[->] node {$\Gamma$} (X5)
	edge [<->] node {$J$} (X2)
(X5) edge [->] node {$\Gamma$} (X3)
	 edge [<->] node {$J$} (X4)
(X4) 	edge [->] node {$\Gamma$}  (X2)
(X6) edge [->] node {$\Gamma$} (X4)
	 edge [<->] node {$J$} (X3)
(X3) edge[->,bend right,above] node {$\Gamma$} (X1)
(X2) edge[->,bend right,below] node {$\Gamma$} (X6) ;
\end{tikzpicture}
\caption{Galois group action on the roots by linear fractional transformations}
\end{figure}
 
\noindent After quantization, these transformations give
$$\begin{array}{c c c c}
X_1 X_2 = q^{-1}(q-1)(X_1+X_2) + q^{-1}, & X_1X_5 = X_1 - 1, & X_2X_6 = X_2 -1, & X_1X_4 = q(X_1+X_4)+1-q,\\
X_4 X_5 = q^{-1}(q-1)(X_4+X_5) + q^{-1}, & X_3X_5 = X_5 - 1, & X_4X_6 = X_6 - 1, & X_2X_3 = q(X_2+X_3)+1-q,\\
X_3 X_6 = q^{-1}(q-1)(X_3+X_6) + q^{-1}, & X_1X_3 = X_3 - 1, & X_2X_4 = X_4 -1, & X_5X_6 = q(X_5+X_6)+1-q.\\
\end{array}$$

\noindent The $q$-deformed Vieta's relations follow.
\end{proof}

\begin{ex}
Let us first take $b = 2$, so that the equation splits into $(x^2 -3x + 1)(x^2 -x-1)(x^2 +x-1) = 0$. The quantization of this equation comes from the quantization of quadratic equations, and it is 
$$
\big(X^2 - [3]^{\sharp}X + q^2\big)\big(qX^2 - (q^2+q-1)X -1\big)\big(q^2X^2 - (q^2-q-1)X -q\big) = 0.
$$
\noindent Therefore in this case, $\Sigma_1 = q+2q^2+3q^3+2q^4+q^5$ is finite, and we can check all the Vieta's relations of Proposition \ref{Vieta6}.
\end{ex}

\begin{ex}
Now let us take $b = 3$ so that the equation splits into
$$
(x^3 -3x^2 + 1)(x^3 -3x + 1) = 0.
$$
\noindent According to \cite{Ovsienko_Ustinov}, these two factors are $q$-deformed as 
$$
(X^3 - B_1(q)X^2 + (B_1(q) - 3)X + 1)(X^3 - B_2(q)X^2 + (B_2(q) - 3)X + 1) = 0, 
$$
\noindent with $B_1(q) = X_1(q) + X_3(q) + X_5(q)$ (resp. $B_2(q) = X_2(q) + X_4(q) + X_6(q)$) the sum of odd (resp. even) roots. \\
\noindent We can check again that the relations of Proposition \ref{Vieta6} are satisfied. Note that unlike the quadratic case, here it seems that the series $B_1(q)$ and $B_2(q)$ are actually infinite.
\end{ex}

\begin{rem}
As all roots of the polynomial defining \eqref{eq6} are algebraic expressions of one (arbitrarily chosen) root $x_1$, the Galois group is isomorphic to the subgroup of $\mathfrak{S}_6$ generated by $(1~2)(3~6)(4~5)$ and $(1~3~5)(2~6~4)$, and its action on the roots is realized by the linear fractional transformations generated by $J$ and $\Gamma$, as depicted in Figure \ref{galois6}. 
\end{rem}

\appendix
\section{Fence poset combinatorics}

One of the key properties of traces of $q$-deformed matrices of $\PSL_{2,q}(\ZZ)$ is that they can be realized as generating functions, at least in the positive case. This property was the main tool used by Oğuz and Ravichandran in \cite{oguz_rank_2023} to prove a conjecture of Morier-Genoud and Ovsienko about unimodality of $q$-traces (see also \cite{McConville}). For matrices of determinant $-1$, we find the same kind of underlying combinatorics, with slight modifications.

\begin{defi}
Let $(a_1,\cdots,a_m)$ be a sequence of positive integers. We define the left circular fence poset associated to this sequence, denoted here by $\overline{F}^{\flat}(a_1,\cdots,a_m)$, to be

\begin{center}
\begin{tikzpicture}[line cap=round,line join=round,x=0.2cm,y=0.2cm]
	\draw [line width=1pt] (-12,-2)-- (-4,6) node[midway,above,sloped] {$\longleftarrow \overset{a_1 -1}{~}\longrightarrow$};
	\draw [line width=1pt] (-4,6)-- (4,-2) node[midway,above,sloped] {$\longleftarrow \overset{a_2}{~}\longrightarrow$};
	\draw [line width=1pt] (4,-2)-- (6,0);
	\draw [line width=1pt] (12,0)-- (14,-2);
	\draw [line width=1pt] (14,-2)-- (22,6) node[midway,above,sloped] {$\longleftarrow \overset{a_{m-2}}{~}\longrightarrow$};
	\draw [line width=1pt] (22,6)-- (30,-2) node[midway,above,sloped] {$\longleftarrow \overset{a_{m-1}}{~}\longrightarrow$};
	\draw [line width=1pt] (30,-2)-- (38,6) node[midway,below,sloped] {$\longleftarrow \overset{a_m-1}{~}\longrightarrow$};
	\draw [line width=1pt,->,>=latex] (38,6)to[bend right] (43,6);
	\draw [line width=1pt,<-,>=latex] (38,6)to[bend left] (43,6);
	\draw [fill=black] (-12,-2) circle (2pt);
	\draw [fill=black] (-4,6) circle (2pt);
	\draw [fill=black] (4,-2) circle (2pt);
	\draw [fill=black] (6,0) circle (2pt);
	\draw [fill=black] (12,0) circle (2pt);
	\draw [fill=black] (14,-2) circle (2pt);
	\draw [fill=black] (22,6) circle (2pt);
	\draw [fill=black] (-10,0) circle (2pt);
	\draw [fill=black] (-6,4) circle (2pt);
	\draw [fill=black] (-2,4) circle (2pt);
	\draw [fill=black] (2,0) circle (2pt);
	\draw [fill=black] (16,0) circle (2pt);
	\draw [fill=black] (20,4) circle (2pt);
	\draw [fill=black] (24,4) circle (2pt);
	\draw [fill=black] (32,0) circle (2pt);
	\draw [fill=black] (38,6) circle (2pt);
	\draw [fill=black] (43,6) circle (2pt);
	\draw [fill=black] (36,4) circle (2pt);
	\draw (9,2) node {$\hdots$};
	\node at(12,-11) {$\underset{0}{~}$};
	\draw [fill=black] (12,-10) circle (2pt);
	\draw [line width=1pt] (-12,-2)to (12,-10);
	\draw [line width=1pt] (12,-10)to[bend right,in=-90] (32,0);
	\draw [fill=white] (30,-2) circle (2pt);
	\draw [fill=white] (28,0) circle (2pt);
	\draw[line width=1pt,dashed,->,>=latex,thin] (30,-2)to[bend left,in=100,out=70] (28,0);
\end{tikzpicture}.
\end{center}
\noindent Let us also define an admissible ideal of this poset to be a subset $I \subset \overline{F}^{\flat}(a_1,\cdots,a_m)$ such that 
\begin{itemize}
	\item[$(i)$] for all $x\in I$, if $y\leq x$ then $y \in I$ ;
	\item[$(ii)$] if $0 \notin I$, then the two white vertices are together in $I$ or together in $I^c$. 
\end{itemize}
\end{defi}

\begin{ex}
	\label{ex_poset}
The left circular fence poset associated with the sequence $(1,2,2)$ looks like
\begin{center}
\begin{tikzpicture}[line cap=round,line join=round,x=0.5cm,y=0.5cm]
\draw[line width=1pt] (0,4) -- (2,2) ;
\draw[line width=1pt] (2,2) -- (3,3);
\draw[line width=1pt,->,>=latex] (3,3)to[bend right] (5,3);
\draw[line width=1pt,<-,>=latex] (3,3)to[bend left] (5,3);
\draw[line width=1pt] (0,4)to[bend right] (2,0);
\draw[line width=1pt] (2,0)to[bend right] (3,3);

\draw [fill=black] (0,4) circle (2pt) node[left]{$1$};
\draw[fill=white] (1,3) circle (2pt) node[below]{$2$};
\draw[fill=white] (2,2) circle (2pt) node[below]{$3$};
\draw[fill=black] (3,3) circle (2pt) node[above]{$4$};
\draw[fill=black] (5,3) circle (2pt) node[right]{$5$};
\draw[fill=black] (2,0) circle (2pt) node[below]{$0$};
\end{tikzpicture}
\end{center}

\noindent The admissible ideals of this poset are 
$$
\begin{array}{c c c c c c c c c}
\emptyset & \{0\} & \{0,3\} & \{2,3\} & \{0,2,3\} & \{0,1,2,3\} & \{0,3,4,5\} &\{0,2,3,4,5\} &  \{0,1,2,3,4,5\}. \\
\end{array}
$$

\end{ex}

\begin{prop}
\label{generative_function}
Let $(a_1,\cdots,a_m)$ be a sequence of positive integers with $m$ odd. \\
\noindent Let $M_q := R_q^{a_1} J_q R_q^{a_2} J_q \cdots R_q^{a_{m}}J_q$ be the associated $q$-deformed matrix. \\
\noindent Then $t^{-\frac{m-1}{2}}\Tr(M_q)$ is the generating function of admissible ideals of $\overline{F}^{\flat}(a_1,\cdots,a_m)$.
\end{prop}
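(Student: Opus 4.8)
The plan is to compute $\Tr(M_q)$ by a transfer-matrix argument running along the fence, mirroring the treatment of the determinant $+1$ statement in \cite{McConville,oguz_rank_2023}, and then to recognize the circular closure of $\overline{F}^{\flat}(a_1,\dots,a_m)$ directly in the trace.

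First I would isolate the single-segment building block: the matrix $R_q^{a}J_q=\begin{pmatrix} q[a]^{\flat} & 1\\ q & 1-q\end{pmatrix}$ should be, up to an explicit monomial in $q$, the transfer matrix for the $i$-th zigzag segment of $\overline{F}^{\flat}$, with rows and columns indexed by whether the two endpoints shared with the neighbouring segments lie in the order ideal $I$, and with the $(\varepsilon,\varepsilon')$ entry the generating function by cardinality of the partial ideals of that segment compatible with those endpoint states. The crucial feature is that the bottom row $(q,\ 1-q)$ is not symmetric — there is no such building block in the $\PSL_2$ case — and this asymmetry is exactly what forces the two white vertices and their coupling in admissibility condition $(ii)$, while the extra global minimum $0$ enters in the normalization needed to keep all entries honest polynomials. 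I would pin this down by induction on $a$ using $R_q^{a+1}J_q=R_q\cdot(R_q^{a}J_q)$ together with the recursive ``adjoin a vertex'' description of order ideals of a fence.

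Then, multiplying the $m$ transfer matrices produces a $2\times2$ matrix enumerating the partial ideals of the linear fence obtained by cutting $\overline{F}^{\flat}$ along the cyclic edge, with prescribed states at the two cut endpoints; taking the trace re-identifies those endpoints, which is precisely the curved double arrow in the definition, and the two diagonal contributions sum exactly over the dichotomy ``$0\in I$, both white vertices in $I$'' versus ``$0\notin I$ with the white-vertex rule'' of condition $(ii)$. Condition $(i)$ is built in segment by segment. For the scalar, each $J_q$ contributes a spurious factor; using the relations already recorded (e.g. $S_q^2=-q^{-1}\Id$ and $R_q^{a_1}J_qR_q^{a_2}J_q=t\,R_q^{a_1}L_q^{a_2}$), over $m$ odd segments these collect into $t^{(m-1)/2}$ times a power of $q$, so dividing by $t^{(m-1)/2}$ and normalising against the base case $m=1$ — where $\Tr(R_q^{a_1}J_q)=q[a_1]^{\flat}+1-q=1+q^2+q^3+\cdots+q^{a_1-1}+q^{a_1+1}$ is directly the generating function of the short list of admissible ideals of $\overline{F}^{\flat}(a_1)$, with the empty ideal giving the constant term $1$ — fixes all signs and shifts (and agrees with the worked case $(1,2,2)$ behind Example \ref{ex_poset}, where $t^{-1}\Tr(M_q)=1+q+2q^2+q^3+2q^4+q^5+q^6$).

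As a cross-check and a shortcut for the scalar bookkeeping, one may instead start from Proposition \ref{shape_matrix_positive}: with $m=2m'+1$ it gives $t^{-m'}\Tr(M_q)=q^{\min(0,a_1)}\bigl(qU^{\flat}+V^{\flat}_{2m'}\bigr)$, and the known fence-poset interpretations of numerators and denominators of $\flat$-quantized rationals from \cite{MGO,BBL} then identify $qU^{\flat}+V^{\flat}_{2m'}$ (with the shift $q^{\min(0,a_1)}$) as the generating function of the two glued linear fences with the minimum $0$ adjoined. The main obstacle is the very first step: proving rigorously that $R_q^{a}J_q$ is the single-segment transfer matrix with the correct cardinality weighting, and matching condition $(ii)$ and the vertex $0$ with the two diagonal entries of the traced product. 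This is where all the ``slight modifications'' relative to the determinant $+1$ case are concentrated; once it is established, the circular closure and the $t$-power bookkeeping are routine.
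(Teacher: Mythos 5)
Your closing ``cross-check'' is in fact the route the paper takes, and it is the one that works: apply Proposition \ref{shape_matrix_positive} (with $a_1\geq 1$, so $q^{\min(0,a_1)}=1$) to get $t^{-\frac{m-1}{2}}\Tr(M_q)=qU^{\flat}+V^{\flat}_{m-1}$, then invoke the combinatorial model of Appendix A of \cite{BBL}, which realizes $U^{\flat}$ and $V^{\flat}_{m-1}$ as generating functions of lower ideals of two explicit linear fences $F$ and $F'$, and finally exhibit the bijection between admissible ideals of $\overline{F}^{\flat}(a_1,\dots,a_m)$ and the disjoint union of the ideal posets of $F$ and $F'$: ideals of $F$ are sent to $I\cup\{0\}$ (whence the factor $q$ in $qU^{\flat}$, and the cyclic identification is inert because $0\in I$), while ideals of $F'$ are sent to themselves (so $0\notin I$ and the white-vertex coupling of condition $(ii)$ is active). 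You correctly identify the dichotomy ``$0\in I$'' versus ``$0\notin I$'' as matching the two diagonal entries, but you leave the actual identification of $F$ and $F'$ with the two halves of that dichotomy as an assertion; that bijection is the entire content of the proof and needs to be written out (in particular, why the poset underlying $V^{\flat}_{m-1}$, which is governed by $(a_2,\dots,a_{m-1})$ with the end parameters reduced by one, is exactly the set of admissible ideals avoiding $0$).

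Your primary route, the segment-by-segment transfer-matrix argument, has a concrete obstruction as stated: the entries of $R_q^{a}J_q$ include $1-q$, and the extracted powers of $t=1-q+q^2$ also have negative coefficients, so the individual factors are \emph{not} matrices of generating functions of partial ideals with prescribed endpoint states -- any entrywise combinatorial interpretation ``by cardinality'' is impossible before cancellations occur in the full product. You acknowledge that establishing the single-segment interpretation is ``the main obstacle,'' but it is not merely the hard step: in the form you propose it is false, and the argument would have to be reorganized (e.g.\ by pairing factors via $R_q^{a_i}J_qR_q^{a_{i+1}}J_q=tR_q^{a_i}L_q^{a_{i+1}}$ and working with the positive matrices $R_q^{a}L_q^{b}$, which is essentially how one reproves Proposition \ref{shape_matrix_positive}) before any bijective bookkeeping can start. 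Note also that your base case is slightly off at the boundary: for $m=1$, $a_1=1$ one has $\Tr(R_qJ_q)=1-q+q^2$, which is not coefficientwise positive, so the normalization ``against the base case $m=1$'' needs care in degenerate cases.
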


\begin{proof}
According to the previous Lemma \ref{shape_matrix_positive}, and keeping the same notations, $\Tr(M_q) = t^{\frac{m-1}{2}}(qU^{\flat}+(V')^{\flat})$. In Appendix A of \cite{BBL}, it is given a combinatorial interpretation of left $q$-rational numbers. In particular, $U^{\flat}$ is the generating function of lower ideals of the poset $F = $
\begin{center}
	\begin{tikzpicture}[line cap=round,line join=round,x=0.2cm,y=0.2cm]
	\draw [line width=1pt] (-12,-2)-- (-4,6) node[midway,above,sloped] {$\longleftarrow \overset{a_1 -1}{~}\longrightarrow$};
	\draw [line width=1pt] (-4,6)-- (4,-2) node[midway,above,sloped] {$\longleftarrow \overset{a_2}{~}\longrightarrow$};
	\draw [line width=1pt] (4,-2)-- (6,0);
	\draw [line width=1pt] (12,0)-- (14,-2);
	\draw [line width=1pt] (14,-2)-- (22,6) node[midway,above,sloped] {$\longleftarrow \overset{a_{m-2}}{~}\longrightarrow$};
	\draw [line width=1pt] (22,6)-- (30,-2) node[midway,above,sloped] {$\longleftarrow \overset{a_{m-1}}{~}\longrightarrow$};
	\draw [line width=1pt] (30,-2)-- (38,6) node[midway,below,sloped] {$\longleftarrow \overset{a_{m}-1}{~}\longrightarrow$};
	\draw [fill=black] (-12,-2) circle (2pt);
	\draw [fill=black] (-4,6) circle (2pt);
	\draw [fill=black] (4,-2) circle (2pt);
	\draw [fill=black] (6,0) circle (2pt);
	\draw [fill=black] (12,0) circle (2pt);
	\draw [fill=black] (14,-2) circle (2pt);
	\draw [fill=black] (22,6) circle (2pt);
	\draw [fill=black] (30,-2) circle (2pt);
	\draw [fill=black] (-10,0) circle (2pt);
	\draw [fill=black] (-6,4) circle (2pt);
	\draw [fill=black] (-2,4) circle (2pt);
	\draw [fill=black] (2,0) circle (2pt);
	\draw [fill=black] (16,0) circle (2pt);
	\draw [fill=black] (20,4) circle (2pt);
	\draw [fill=black] (24,4) circle (2pt);
	\draw [fill=black] (28,0) circle (2pt);
	\draw [fill=black] (32,0) circle (2pt);
	\draw [fill=black] (36,4) circle (2pt);
	\draw [fill=black] (38,6) circle (2pt);
	\draw (9,2) node {$\hdots$};
	\draw [fill=black] (43,6) circle (2pt);
	\draw [line width=1pt,->,>=latex] (43,6)to[bend right] (38,6);
	\draw [line width=1pt,<-,>=latex] (43,6)to[bend left] (38,6);
	\end{tikzpicture}
\end{center}

\noindent Similarly, $(V')^{\flat}$ is the generating function for the poset $F' = $

\begin{center}
	\begin{tikzpicture}[line cap=round,line join=round,x=0.2cm,y=0.2cm]
	\draw [line width=1pt] (-4,6)-- (4,-2) node[midway,above,sloped] {$\longleftarrow \overset{a_2-1}{~}\longrightarrow$};
	\draw [line width=1pt] (4,-2)-- (6,0);
	\draw [line width=1pt] (12,0)-- (14,-2);
	\draw [line width=1pt] (14,-2)-- (22,6) node[midway,above,sloped] {$\longleftarrow \overset{a_{m-2}}{~}\longrightarrow$};
	\draw [line width=1pt] (22,6)-- (30,-2) node[midway,above,sloped] {$\leftarrow \overset{a_{m-1}-1}{~}\longrightarrow$};
	\draw [fill=black] (-4,6) circle (2pt);
	\draw [fill=black] (4,-2) circle (2pt);
	\draw [fill=black] (6,0) circle (2pt);
	\draw [fill=black] (12,0) circle (2pt);
	\draw [fill=black] (14,-2) circle (2pt);
	\draw [fill=black] (22,6) circle (2pt);
	\draw [fill=black] (30,-2) circle (2pt);
	\draw [fill=black] (-2,4) circle (2pt);
	\draw [fill=black] (2,0) circle (2pt);
	\draw [fill=black] (16,0) circle (2pt);
	\draw [fill=black] (20,4) circle (2pt);
	\draw [fill=black] (24,4) circle (2pt);
	\draw [fill=black] (28,0) circle (2pt);
	\draw (9,2) node {$\hdots$};
	\draw [fill=black] (35,-2) circle (2pt);
	\draw [line width=1pt,->,>=latex] (35,-2)to[bend right] (30,-2);
	\draw [line width=1pt,<-,>=latex] (35,-2)to[bend left] (30,-2);
	\end{tikzpicture}
	\end{center}

\noindent Let us denote by $\overline{J}^{\flat}$ the poset of lower ideals of the left circular fence poset $\overline{F}^{\flat}(a_1,\cdots,a_m)$ and by $J$ (resp. $J'$) the poset of lower ideals of $F$ (resp. $F'$). The poset $J$ is embedded in $\overline{J}^{\flat}$ by sending an ideal $I$ of $J$ on $I \cup \{0\}$ in $\overline{J}^{\flat}$ (as the image contains the vertex $0$, the dashed edge is not working), and the poset $J'$ is embedded in $\overline{J}^{\flat}$ by sending an ideal $I'$ of $J'$ on the same ideal in $\overline{J}^{\flat}$, so that it does not contain $0$ and the dashed edge is effective. This builds a bijection between $\overline{J}^{\flat}$ and $J\sqcup J'$, which is translated for the generating functions by the equality 
$$
qU^{\flat} + (V')^{\flat} = \sum_{\overline{I} \in \overline{J}^{\flat}} q^{\abs{I}}.
$$
\end{proof}

\begin{ex}
We consider again the sequence $(1,2,2)$. The associated matrix is 
$$
M_q = R_qJ_qR_q^2J_qR_q^2J_q = t\begin{pmatrix}
q^6 + q^5 + 2q^4 + q^3 + q^2 + q  & q^3 + q^2 + 1\\
 q^5 + q^4 + q^3 + q^2 + q  & q^2 + 1\\
\end{pmatrix},
$$
\noindent so $\Tr(t^{-1}M_q) = 1 + q + 2q^2 + q^3 + 2q^4 + q^5 + q^6$.\\
\noindent This corresponds to the generating function of admissible ideals of the poset described in exemple \ref{ex_poset} just above.
\end{ex}

\section{Positivity of $q$-traces of determinant $1$}

We would like to add here a result about positivity of $q$-traces for matrices of determinant $1$. It was originally studied by Leclere and Morier-Genoud in \cite{Leclere_modular}, but it happens that their statement (Lemma 3.10) was slightly incorrect. We propose a little modification of this lemma to make it work, and give a proof following the proof in \cite{Leclere_modular}.

\begin{lemme}
Let $c_1,\cdots,c_k$ be integers greater or equal to $2$. Let 
$$
M_q(c_1,\cdots,c_k) = R^{c_1}SR^{c_2}S \cdots R^{c_k}S
$$
be the associated matrix of $\PSL_2(\ZZ)$. Then the $q$-trace of this matrix is a polynomial in $q$ with positive coefficients.
\end{lemme}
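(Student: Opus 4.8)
\emph{Proof plan.} The plan is to follow the inductive argument of \cite[Lemma~3.10]{Leclere_modular}, the ``little modification'' being that we impose $c_i \ge 2$ for \emph{every} $i$, including $i=1$: this hypothesis is genuinely needed, since already $\Tr(M_q(0,2)) = -q^{-1}-q$ is neither a polynomial nor positive. Throughout I record the elementary building block
\[
R_q^{c}S_q = \begin{pmatrix} [c]^{\sharp} & -q^{c-1} \\ 1 & 0 \end{pmatrix},
\]
so that $M_q(c_1,\dots,c_k)$ is the ordered product of these over $i=1,\dots,k$; since all $c_i\ge 2$ its entries lie in $\ZZ[q]$, so $\Tr(M_q(c_1,\dots,c_k))$ is automatically a polynomial and only positivity of its coefficients is at stake. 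I will use freely that $\Tr$ is invariant under cyclic permutation of the factors, hence under cyclic permutation of $(c_1,\dots,c_k)$, which lets me move any chosen index into last position.

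First I would dispose of the base cases. For $k=1$, $\Tr(R_q^{c_1}S_q) = [c_1]^{\sharp} = 1+q+\dots+q^{c_1-1}$, which is positive. For general $k$ but with every $c_i=2$, one has $M_q(2,\dots,2) = (R_q^2S_q)^k$ with
\[
R_q^2S_q = \begin{pmatrix} 1+q & -q \\ 1 & 0 \end{pmatrix},
\]
whose characteristic polynomial factors as $(\lambda-1)(\lambda-q)$; hence $R_q^2S_q$ is diagonalisable over $\overline{\Lambda}$ with eigenvalues $1$ and $q$, and $\Tr(M_q(2,\dots,2)) = 1+q^k$, again positive. (Equivalently, Cayley--Hamilton yields $\Tr((R_q^2S_q)^k) = (1+q)\Tr((R_q^2S_q)^{k-1}) - q\,\Tr((R_q^2S_q)^{k-2})$.)

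Then I would run the induction on $\sum_i(c_i-2)\ge 0$, the base case being the all-$2$'s configuration just treated. If $\sum_i(c_i-2)>0$, some $c_j\ge 3$; after a cyclic shift assume $c_k\ge 3$. The inductive step peels one unit off $c_k$ using the relations among $R_q$ and $S_q$ collected in Section~2 --- $S_q^2=-q^{-1}\Id$, $(R_qS_q)^3=-\Id$, $R_qS_qR_q=-q^2S_qR_q^{-1}S_q$, together with $R_q^{c}S_q=(R_q^{c-1}S_q)(S_q^{-1}R_qS_q)$ --- in order to rewrite $\Tr(M_q(c_1,\dots,c_k))$ as a combination, with coefficients equal to monomials in $q$ with coefficient $+1$, of $q$-traces $\Tr(M_q(\dots))$ whose parameter vectors again have all entries $\ge 2$ and strictly smaller value of $\sum_i(c_i-2)$. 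By the induction hypothesis each such trace has positive coefficients, hence so does any nonnegative combination of them. The hypothesis $c_i\ge 2$ for all $i$ is precisely what guarantees that no parameter is ever driven below $2$ during the reduction, so that the recursion stays within the scope of the lemma and terminates at the all-$2$'s configuration.

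The main obstacle is the sign bookkeeping in the inductive step. The naive ``$q$-continuant'' recursion obtained by expanding the last block --- of the shape $\Tr(M_q(c_1,\dots,c_k)) = [c_k]^{\sharp}P - q^{\,c_k-1}Q$ for suitable $q$-traces $P,Q$ --- is a \emph{difference}, so positivity is invisible from it; the real work is to choose the bracketing of the product (equivalently, which of the relations above to apply, and where) so that the subtracted term cancels against part of $[c_k]^{\sharp}P$, leaving a manifestly nonnegative expression, and to check that the delicate boundary configurations --- a lone $2$ in last position, or two consecutive $2$'s --- are absorbed by the base cases rather than pushing a parameter out of range. The remaining computations are routine $2\times2$ matrix algebra. (Alternatively, one may identify these $q$-traces, up to a power of $t$, with rank-generating functions of periodic fence posets, in the spirit of Appendix~A and \cite{oguz_rank_2023}, from which positivity --- indeed unimodality --- follows at once; we keep the elementary argument here to stay close to \cite{Leclere_modular}.)
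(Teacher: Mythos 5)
Your write-up is a plan rather than a proof, and the one step that carries all the content is exactly the step you leave undone. You correctly identify that the naive expansion of the last block gives a \emph{difference} of the shape $[c_k]^{\sharp}P - q^{c_k-1}Q$, and you defer to ``choosing the bracketing'' so that the subtracted term cancels; but you never exhibit such a rewriting, and it is not clear one exists purely at the level of traces. Concretely, peeling one unit off $c_k$ via $[c_k]^{\sharp}=[c_k-1]^{\sharp}+q^{c_k-1}$ gives
\begin{equation*}
\Tr\big(M_q(c_1,\dots,c_k)\big)=\Tr\big(M_q(c_1,\dots,c_k-1)\big)+q^{c_k-1}(A-C),
\end{equation*}
where $A,C$ are \emph{entries} of $M_q(c_1,\dots,c_{k-1})$, not traces of in-scope configurations; moreover when $c_k=3$ the first term has a parameter equal to $2$ only by luck of your setup, but iterating within a block inevitably produces parameters equal to $1$ or forces you to control individual matrix entries. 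So the reduction to the all-$2$'s configuration, as stated, does not close, and positivity of expressions like $A-C$ is precisely what is missing.

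The paper resolves this by a different induction: on $k$ itself, with a \emph{strengthened} hypothesis on the entries of $M_q(c_1,\dots,c_k)=\left(\begin{smallmatrix}A&B\\C&D\end{smallmatrix}\right)$, namely that $A$, $-B$, $A+D$ and $A+B$ all have positive coefficients, supplemented by $A-C\geq 0$ (which follows from total positivity of $q$-rationals, since $[\![c_1,\dots,c_k]\!]>1$ when all $c_i\geq 2$). Appending one block $R_q^{c}S_q$ then gives $A'=q[c-1]^{\sharp}A+(A+B)$, $A'+D'=q[c-2]^{\sharp}A+q^{c-1}(A-C)+(A+B)$, etc., each manifestly a positive combination of quantities known to be positive; the trace is $A+D$. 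This is the idea your plan lacks: you must track signs of matrix entries, not just traces. Your observation that $c_1\geq 2$ is needed (the example $\Tr(M_q(0,2))=-q^{-1}-q$) and your base-case computation $\Tr\big((R_q^2S_q)^k\big)=1+q^k$ are both correct but do not substitute for the inductive step.
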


\begin{proof} We will write $P \geq 0$ to say that the polynomial $P$ has positive coefficients. We start an induction on $k$, aiming to prove that 
\begin{center}
$(H)$ \text{     } $\forall k \geq 1,~~ \forall (c_1,\cdots,c_k) \in \ZZ^k_{\geq 2} $, if $M_q(c_1,\cdots,c_k) = \begin{pmatrix}
		A & B \\
		C & D\\
	\end{pmatrix}$, we have
	$$A \geq 0 \text{  ,  } -B \geq 0 \text{  ,  } A+D \geq 0 \text{  and  } A+B \geq 0.$$
\end{center}

\noindent For $k = 1$,
$$
M_q(c_1) = \begin{pmatrix}
	[c_1]^{\sharp} & -q^{c_1-1}\\
	1 & 0\\
\end{pmatrix}.
$$
\noindent And $(H)$ is satisfied.\\
~\\
\noindent Let $k \geq 2$ and let us suppose that $(H)$ is true for any $k_0 \leq k$. Let $c_1,\cdots,c_k,c$ be $k+1$ integers greater or equal to $2$. We set 
$$
M_q(c_1,\cdots,c_k) = \begin{pmatrix}
	A & B \\
	C & D\\
\end{pmatrix},
$$

\noindent so that by hypothesis we have $A \geq 0$, $-B\geq 0$,  $A+D \geq 0 \text{ and } A+B \geq 0$.\\
\noindent Moreover, as the $c_i$'s are greater than $2$, the rational number computed by their negative continued fraction is greater than $1$, and by total positivity of $q$-rational numbers, we can deduce that $A-C \geq 0$. \\
~\\
\noindent Then one can compute 
$$
M_q(c_1,\cdots,c_k,c) = \begin{pmatrix}
	[c]^{\sharp} A + B & -q^{c-1}A\\
	[c]^{\sharp} C + D & -q^{c-1}C\\
\end{pmatrix} = \begin{pmatrix}
	A' & B'\\
	C' & D'\\
\end{pmatrix}.
$$

\noindent Then $A' = q[c-1]^{\sharp}A + (A + B) \geq 0$, $-B' = q^{c-1}A \geq 0$, and 
\begin{align*}
A' + D' &= q[c-1]^{\sharp}A + A + B - q^{c-1}C\\
		&= q[c-2]^{\sharp}A + q^{c-1}(A-C) + (A+B)\\
		&\geq 0.
\end{align*}

\noindent Likewise, 
\begin{align*}
A' + B' &= [c]^{\sharp}A + B - q^{c-1}A\\
		&= q[c-2]^{\sharp}A + (A + B)\\
		& \geq 0.
\end{align*}

\noindent This concludes the induction and the proof.
\end{proof}

\section{Proof of Proposition \ref{two_deformations}} 

We explain here why we consider only two $q$-deformations of the rational projective line, the left and the right versions, corresponding to the orbits in $\PP^1(\QQ(q))$ of $\frac{1}{0}$ and $\frac{1}{1-q}$ respectively. \\
~\\
Let $f(q) \in \PP^1(\QQ(q))$ be such that $f(1) = \frac{1}{0} \in \PP^1(\QQ)$. Let $O_f$ be the orbit of $f(q)$ under the action of the projective modular space $\PSL_{2,q}(\ZZ)$. We are interested in the following proposition :
\begin{center}
For all $x\in \QQ$, there is exactly one element $g_x \in O_f$ such that $g_x(1) = x$. 
\end{center}
~\\
\noindent This amounts to requiring that, via the isomorphism \eqref{iso_pgl} between $\PSL_2(\ZZ)$ and $\PSL_{2,q}(\ZZ)$, 
\begin{equation}
\label{stab_condition}
\Stab_{\PSL_{2,q}(\ZZ)}(f(q)) \simeq \Stab_{\PSL_2(\ZZ)}\left(\frac{1}{0}\right) = \langle R \rangle.
\end{equation}

\noindent We need then to show that this equality \eqref{stab_condition} holds if and only if $f \in \{\frac{1}{0}, \frac{1}{1-q}\}$. \\
Let us take $f$ such that \eqref{stab_condition} holds. This means that $R_q\cdot f = f$, so $qf + 1 = f$. Let us write $f$ as a quotient of two coprime polynomials $f = \frac{h}{g}$. \\
\noindent We get $\frac{qh+g}{g} = \frac{h}{g}$, so $g(qh+g) = hg$. Then either $g = 0$ meaning that $f = \frac{1}{0}$, either $g\neq 0$ and $(1-q)h = g$, meaning that $f = \frac{1}{1-q}$. \\
~\\
\noindent Conversely, the two solutions $f = \frac{1}{0}$ and $f = \frac{1}{1-q}$ satisfy \eqref{stab_condition}.

\section*{Acknowledgements} I would like to thank my advisor Sophie Morier-Genoud, and Valentin Ovsienko, who guided me all the way long.


\bibliographystyle{plain}
\bibliography{full_biblio}

\end{document}